\numberwithin{equation}{section}
\newtheorem{thm}{Theorem}[section]
\newtheorem{introthm}{Theorem}
\newtheorem{lem}[thm]{Lemma}
\newtheorem{cor}[thm]{Corollary}
\newtheorem{pro}[thm]{Proposition}
\theoremstyle{definition}
\newtheorem{defi}[thm]{Definition}
\newtheorem{ex}[thm]{Example}
\theoremstyle{remark}
\newtheorem{rmk}{Remark}
\newtheorem{proper}{Property}
\newcommand{\spinc}{\textnormal{spin}^{c}}
\newcommand{\FSW}{\textnormal{FSW}}
\newcommand{\Diff}{\textnormal{Diff}}
\newcommand{\Symp}{\textnormal{Symp}}
\newcommand{\Conf}{\textnormal{Conf}}
\newcommand{\Ker}{\textnormal{Ker}}
\newcommand{\Coker}{\textnormal{Coker}}
\newcommand{\Image}{\textnormal{Im}}
\newcommand{\p}{\textnormal{p}}
\newcommand{\Aut}{\textnormal{Aut}}
\newcommand{\End}{\textnormal{End}}
\newcommand{\U}{\textnormal{U}}
\newcommand{\SU}{\textnormal{SU}}
\newcommand{\T}{\textnormal{T}}
\newcommand{\Fr}{\textnormal{Fr}}
\newcommand{\WA}{\textnormal{WA}}
\newcommand{\KS}{\textnormal{KS}}
\newcommand{\Emb}{\textnormal{Emb}}
\newcommand{\SO}{\textnormal{SO}}
\newcommand{\Def}{\textnormal{Def}}
\newcommand{\Mor}{\textnormal{Mor}}
\newcommand{\Ger}{\textnormal{Ger}}
\newcommand{\Art}{\textnormal{Art}}
\newcommand{\Set}{\textnormal{Set}}
\newcommand{\Met}{\textnormal{Met}}
\newcommand{\Map}{\textnormal{Map}}
\newcommand{\determinant}{\textnormal{det}}
\title{Family Seiberg-Witten equation on Kahler surface and $\pi_i(\Symp)$ on multiple-point blow ups of Calabi-Yau surfaces}
\author{Yi Du}
\date{}
\address{ School of Mathematics, University of Minnesota, Minneapolis, MN, US }
\email{du000187@umn.edu}
\begin{document}

\maketitle
\begin{abstract}
Let $M$ be a torus $T^4$, a $K3$ surface or an Enriques surface, Let $\omega$ be a Kahler form on $M$ and suppose that $\kappa=[\omega]$ satisfies certain irrationality condition. Let $(X,\tilde{\omega})$ be $n-$point Kahler blowup of $(M,\omega)$. Applying techniques related to deformation of complex objects, we extend the guage-theoretic invariant on closed Kahler surfaces developed by Kronheimer\cite{Kronheimer1998} and Smirnov\cite{Smirnov2022}\cite{Smirnov2023}, and we get a series of homomorphisms from the homotopy groups of $\Symp(X,\tilde{\omega})$ to $\mathbb{Z}_2^\infty$. As a result, we show that some even dimensional higher homotopy groups of $\Symp(X,\tilde{\omega})$ are infinitely generated.
\end{abstract}
\section{Introduction}\label{section: introduction}
Let $(X,\omega)$ be a connected oriented closed smooth symplectic $4-$manifold, $\Diff(X)$ be diffeomorphism group of $X$, $\Diff_0(X)$ be the group of diffeomorphisms isotopic to identity, and $\Symp(X,\omega)$ be the symplectomorphism group of $(X,\omega)$.\\
Homotopy types of symplectomorphism groups is of great interest to geometors. Gromov\cite{Gromov1985} shows that $\Symp(S^2\times S^2,\omega=\omega_{st}\oplus\omega_{st})$ is homotopy equivalent to a semi-direct product $\SO(3)\times \SO(3)\times \mathbb{Z}_2$, where $\omega_{st}$ is the standard volume form on $S^2$. Lalonde-Pinsonnault\cite{LalondePinsonnault2002} show that $\Symp(S^2\times S^2\#\overline{\mathbb{CP}^2},\tilde{\omega})$ is homotopy equivalent to a semi-direct product $T^2 \times \mathbb{Z}_2$. Here, $(S^2\times S^2\#\overline{\mathbb{CP}^2},\tilde{\omega})$ is a symplectic blow up of $(S^2\times S^2,\omega)$. \\
Kronheimer\cite{Kronheimer1998} introduces a powerful tool studying the homotopy types of symplectomorphism groups and diffeomorphism groups
$$\Symp_s(X,\omega):=\Symp(X,\omega)\cap \Diff_0(X)\xrightarrow{\iota} \Diff_0(X)\xrightarrow{\p} S_{[\omega]}$$
Here, $S_{[\omega]}$ is the connected component of the space of symplectic forms in $[\omega]$ which contains $\omega$.\\
Using gauge theoretic invariant, people get many results about Kronheimer's fibration. In Kronheimer's paper\cite{Kronheimer1998}, he defines a family Seiberg-Witten-theoretic invariant, to detect non-trivial elements in homotopy groups of $S_{[\omega]}$. Smirnov\cite{Smirnov2022},\cite{Smirnov2023} refines Kronheimer's invariant and get a series of results about $\pi_0(\Symp_s(X,\omega))$ on some symplectic $4-$manifolds. More specifically, he shows that for some Kahler forms $\omega$ on $K3$ surface or blow up of tori, $\pi_0(\Symp_s(X,\omega))$ are infinitely generated. Lin\cite{Lin2022}, Smirnov\cite{Smirnov2020} show that $\Coker(\iota_{1,*}:\pi_1(\Symp_s(X,\omega))\rightarrow \pi_1(\Diff_0(X))$ is non-trivial for a large family of symplectic $4-$manifolds. Konno,Li and Wu\cite{KonnoLiWu} prove a higher dimensional version of the result.\\
There are also many results about the study of higher dimensional homotopy groups in Kronheimer's fibration. Abreu and Mcduff\cite{AbreuMcduff2000} show that $$\Ker(\iota_{4l,*}:\pi_{4l}(\Symp_s(S^2\times S^2,\omega_{st}\oplus\lambda\omega_{st}))\rightarrow\pi_{4l}\Diff_0(S^2\times S^2))$$
is non-trivial when $l\in[\lambda,\lambda+1)$. And Anjos-Li-Li-Pinsonnault\cite{AnjosLiLiPinsonnault2019} show a stability result of homotopy groups of $\Symp_s(X,\omega)$ when $X$ is a rational surface with $\chi (X) \leq 12$ and symplectic forms $\omega$ are in the same chamber of a symplectic cone.\\
In this paper, we shall prove that for a family of Kahler manifolds $(X,\omega)$, the symplectomorphism groups $\Symp(X,\omega)$($\Symp_s(X,\omega)$) admit some infinitely generated higher homotopy groups.
\begin{introthm}\label{thm: infinitely generated symp family}
Let $(M,\omega)$ be a Kahler manifold and let $\kappa=[\omega]$. Let $(X_n,\tilde{\omega}^n)$ be $n-$point blow up of $(M,\omega)$, the sizes of exceptional divisors are equal and their sum is small enough. In the homotopy long exact sequence of Kronheimer's fibration
$$...\rightarrow\pi_{k+1}(S_{[\tilde{\omega}^n]})\xrightarrow{\partial_*} \pi_k(\Symp_s(X_n,\tilde{\omega}^n))\xrightarrow{\iota_*} \pi_k(\Diff_0(X_n))\xrightarrow{p_*} \pi_k(S_{[\tilde{\omega}^n]})\rightarrow...$$
(1) When $M$ is diffeomorphic to $\T^4$ and $\kappa$ is non-resonant, 
$$\Ker(i_{2k-2,*}: \pi_{2k-2}(\Symp_s(X_n,\tilde{\omega}^n))\rightarrow \pi_{2k-2}(\Diff_0(X_n)))$$
is infinitely generated for $k=1,...,n$.\\
(2) When $M$ is a $K3$ surface, $\kappa$ is non-resonant, then $$\Ker(i_{2k,*}: \pi_{2k}(\Symp_s(X_n,\tilde{\omega}^n))\rightarrow \pi_{2k}(\Diff_0(X_n)))$$
is infinitely generated for $k=1,...,n$\\
(3) When $M$ is an Enriques surface, 
$$\{\langle \pi^*\kappa, \delta \rangle|\delta \text{ is a root in } E_8^{\oplus 2}\oplus H^{\oplus 3}\} $$ 
is a dense subset of some neighborhood of $0\in\mathbb{R}$, then 
$$\Ker(i_{2k,*}: \pi_{2k}(\Symp_s(X_n,\tilde{\omega}^n))\rightarrow \pi_{2k}(\Diff_0(X_n)))$$
is infinitely generated for $k=1,...,n$.
\end{introthm}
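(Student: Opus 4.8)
The plan is to establish a family Seiberg-Witten invariant on multiple-point Kähler blow-ups and use it, via Kronheimer's fibration, to produce infinitely many linearly independent elements in the kernel of $\iota_{*}$. The three cases (1)–(3) share the same architecture; the differences are bookkeeping about which $\spinc$ structures carry non-trivial family invariants and in which degree. Let me sketch the common strategy and then indicate where the cases diverge.

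**Step 1: Constructing the family.** For each primitive class $\delta$ (a root in the intersection form of $M$, or of the $K3$ double cover in the Enriques case) with $\langle \pi^{*}\kappa, \delta\rangle$ small, I would produce a loop (or higher sphere) of Kähler structures on $M$ obtained by deforming the complex structure so that $\delta$ becomes — and then ceases to be — of type $(1,1)$, i.e. a wall-crossing in the period domain. The irrationality/non-resonance hypothesis on $\kappa$ ensures that $\langle\kappa,\delta\rangle\neq 0$ for all relevant $\delta$, so the symplectic form stays away from the walls and the family of symplectic forms is genuinely a family in $S_{[\omega]}$. Blowing up $n$ points of equal, sufficiently small size $\varepsilon$ (so that $[\tilde\omega]=\pi^{*}\kappa-\varepsilon\sum E_i$ stays in the Kähler cone) transports this to a family on $X=M\#n\overline{\mathbb{CP}^2}$. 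The sphere dimension is dictated by the real dimension of the relevant stratum of the period domain where $\delta$ is $(1,1)$: this is why one gets $\pi_{2k-2}$ for $T^4$ (with $h^{2,0}=0$ interacting with $b_1\neq 0$) versus $\pi_{2k}$ for $K3$ and Enriques ($h^{2,0}=1$), and the parameter $k=1,\dots,n$ comes from choosing $\delta$ of the form $E_i-E_j$ or $E_i$ combined with the $k$-dependent perturbation.

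**Step 2: Computing the family invariant.** On a Kähler surface the Seiberg-Witten moduli space is identified with a space of effective divisors / holomorphic data, so the family invariant over the constructed sphere reduces to an algebro-geometric count. Using the deformation theory of the relevant complex objects — this is the ``techniques related to deformation of complex objects'' advertised in the abstract, extending Kronheimer's and Smirnov's closed-surface computations — I would show the family invariant is non-zero for infinitely many distinct $\delta$, and that these give \emph{linearly independent} classes: the standard device is that the $\spinc$ structures (equivalently the first Chern classes) are pairwise distinct, so a non-trivial linear combination would have to be detected by a single family invariant, contradiction. Here the non-resonance / density hypothesis is used to guarantee an infinite supply of admissible $\delta$ with $\langle\pi^{*}\kappa,\delta\rangle$ in the controlled range; for Enriques, since its own $H^2$ is too small, one pulls back to the $K3$ cover to find enough roots, which is exactly what hypothesis (3) encodes.

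**Step 3: From $S_{[\omega]}$ to $\Symp_s$, and the finiteness corollary.** The non-vanishing family invariant shows the corresponding class in $\pi_{m}(S_{[\omega]})$ is non-zero; but the family of \emph{diffeomorphisms} is null-homotopic (the underlying smooth family is trivial — it is a loop/sphere of complex structures on a fixed smooth manifold, lifting to $\Diff_0$), so by exactness of Kronheimer's sequence the connecting map $\partial_{*}$ sends it to a non-trivial element of $\ker(\iota_{*})\subset\pi_{m-1}(\Symp_s(X,\omega))$. Infinitely many independent such classes give the infinite generation. For the last sentence of (3): a space homotopy equivalent to a finite CW complex has finitely generated homotopy groups in each degree (and vanishing above its dimension), so infinite generation of $\pi_{2k}(\Symp(M,\omega_k))$ rules this out; I should be slightly careful to pass between $\Symp_s$ and the full $\Symp$, but since $\Symp_s$ is a union of components of $\Symp$ this is immediate. \textbf{The main obstacle} I expect is Step 2 — controlling the family Seiberg-Witten / Gromov invariant over a \emph{multi-point} blow-up with a possibly positive-dimensional parameter space: one must verify transversality of the family moduli problem, identify the obstruction bundle coming from $h^{2,0}\neq 0$ correctly (the source of the ``even-dimensional'' phenomenon), and show the resulting count is unchanged under the blow-up and genuinely distinguishes infinitely many $\spinc$ structures. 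The deformation-theoretic input replacing Kronheimer's hyperkähler-rotation argument is the delicate point.
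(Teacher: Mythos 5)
Your overall architecture (families built from incidence strata in the polarized period domain together with configurations of blow-up points, an algebro-geometric computation of a family Seiberg--Witten count, and Kronheimer's fibration to land in $\Ker(\iota_*)$) matches the paper, but Step 3 has a genuine logical gap at exactly the point the whole argument turns on. By exactness, $\partial_*[\mathcal{S}]\neq 0$ if and only if $[\mathcal{S}]\notin\Image\bigl(p_*:\pi_m(\Diff_0(X))\rightarrow\pi_m(S_{[\omega]})\bigr)$; it is not enough that $[\mathcal{S}]\neq 0$ in $\pi_m(S_{[\omega]})$, and your justification --- that the underlying smooth family is trivial / the family of diffeomorphisms is null-homotopic --- does not address this, since a class in $\Image(p_*)$ is by definition also realized by a smoothly trivial family of forms pushed forward by a sphere of diffeomorphisms. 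The invariant you use must therefore be shown to vanish on $\Image(p_*)$, and a single Kronheimer-type count $Q_\epsilon$ does not obviously have this property in the relevant chambers. The paper's mechanism (extending Smirnov) is to take the difference $q=Q_{\delta-\sum e_{i_t}}-Q^-_{\delta-\sum e_{i_t}}$, where $Q^-$ is the Kronheimer invariant of the conjugate $\spinc$ structure composed with negative inflation: on a class coming from $\Diff_0$ the two terms agree by the conjugation symmetry of the family invariant over a closed base (Lemma about reduction of $q$ to monodromies), so $q$ kills $\Image(p_*)$, while on the constructed Kähler families $Q=1$ and $Q^-=0$ by an area argument after inflating. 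Without this (or some substitute), your proposal does not establish non-triviality in $\Ker(\iota_*)$.

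The second place where the proposal stops short is the transversality you yourself flag as ``the main obstacle'': the paper's actual new input is a blow-up regularity theorem proved by deformation theory of pairs --- surjectivity of $H^1(X,\Theta_X)\rightarrow H^1(C,\mathcal{N}_{C|X})$ is controlled through $\Theta_X(-\log C)$, the Flenner--Zaidenberg and Burns--Wahl blow-up computations, and discreteness of the automorphism groups of the blown-up surfaces --- which allows an induction on the number of blow-up points starting from Smirnov's one-point case; your sketch gestures at deformation theory but supplies no argument. Two smaller inaccuracies: the shift between $\pi_{2k-2}$ (torus) and $\pi_{2k}$ ($K3$, Enriques) does not come from $h^{2,0}$ (note $h^{2,0}(T^4)=1$), but from $\delta^2=0$ versus $\delta^2=-2$ in the index formula, equivalently from the codimension of the incidence stratum of configurations lying on the relevant curve; and in the Enriques case the passage to the $K3$ cover is forced not only to find enough roots but because $b^+=1$ obstructs defining the family invariant directly, so one needs the equivariant version of Kronheimer's fibration and the identification of $\Symp^\pi$, $\Diff^\pi_0$ with the corresponding groups of the Enriques surface before the $K3$ computation can be transferred down.
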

Here, a non-resonant class on a $4-$manifold $X$ is a cohomology class $\kappa\in H^2(X,\mathbb{R})$ s.t. $\kappa^2>0$, and for any non-zero class $\delta\in H^2(X;\mathbb{Z})$,
$$\langle \kappa, \delta \rangle \neq 0$$
And a root in $H^2(M;\mathbb{Z})\cong E_8^{\oplus 2}\oplus H^{\oplus 3}$ is a element $\delta$ of $\delta^2=-2$.\\
Now we begin to sketch the proof of Theorem$\eqref{thm: infinitely generated symp family}$.\\
The first step will be the construction of $S^{2k-1}(\T^4 \text{ case})$ or $S^{2k+1}(K3\text{ case})$ families of symplectic forms on $X$ in Theorem $\eqref{thm: infinitely generated symp family}$.\\
For any symplectic manifold $(M,\omega)$, we define $\mathcal{A}_{[\omega]}$ to be a connected component of
$$\{\text{ almost complex structure } J \text{ on } M| J \text{ is compatible with some }\omega^\prime\in S_{[\omega]}\}$$
By \cite{Mcduff2000}, we know $\mathcal{A}_{[\omega]}$ is homotopy equivalent to $S_{[\omega]}$. Homotopy type of $\mathcal{A}_{[\omega]}$ is related to ``stratifications'' of the space of almost complex structures over the underlying smooth manifolds. There are many papers studying stratifications of almost complex structures on a symplectic manifold\cite{AbreuMcduff2000},\cite{AnjosKedraPinsonnault2023},\cite{Mcduff1998},\cite{Mcduff2000}. Multiple-point blow-ups will create some new ``stratifications'' in the space of almost complex structures, and it may be easier for us to detect the homotopy group of $\mathcal{A}_{[\omega]}$. Now we'll construct some (almost)complex families over $S^{2k-1}(\T^4 \text{ case})$ or $S^{2k+1}(K3\text{ case})$ which admits compatible sympectic forms in certain class.\\
Let $X$ be a $K3$ surface or a torus $\T^4$, let $\kappa$ be a non-resonant positive class, and let $\mathcal{M}_\kappa$ be the corresponding $\kappa-$polarized period domain of complex structures. We define a parameter space 
\begin{equation}\label{equa:parameter space P_kappa}
\mathcal{P}_\kappa:=\{(u,x_1,...,x_n)\in\mathcal{M}_\kappa\times \Conf_n(X)\}
\end{equation}
using the configuration space of $X$. Because the automorphism group of $\T^4$ is not discrete, when we define $\mathcal{P}_\kappa$ for torus, we may fix $x_1=[0,0]$. We'll explain the reason in $\autoref{sec:3}$. $\mathcal{P}_\kappa$ is a parameter space of complex structures on $X_n$, the $n-$point blowup of $X$, and each complex structure is compatible with some Kahler forms. Now, we'll define some subvarieties of $\mathcal{P}_\kappa$ and our desired spherical families will be defined over the links between the subvarieties.\\
We take a positive number $\lambda$ s.t. $n\lambda$ is small enough, and we define some index sets
\begin{align}\label{equa: index sets}
\Delta_{\T^4}:= &\{\delta\in H^2(\T^4;\mathbb{Z})|\delta \textbf{  primitive}, \delta^2=0\}\\
\Delta_{K3}:= &\{\delta\in H^2(X;\mathbb{Z})| \delta^2=-2\}\\
\Delta_{k,\T^4}(\Delta_{k,K3}):= &\{[\delta]\in \Delta_{\T^4} (\Delta_{K3})|(k-1)\lambda< \langle \delta,\kappa \rangle <k\lambda\}
\end{align}
For every $\delta\in\Delta_k$, we define a hyperplane in the $\kappa-$polarized period domain:
$$H^\delta_\kappa:=\{u\in \mathcal{M}_\kappa|\langle u, \delta\rangle=0\}$$ 
$u\in H^\delta_\kappa$ represents Kahler surface $X_u$ s.t. there is a curve in class $\delta$ on $X_u$. For a generic fixed complex structure $u_0\in H^\delta_\kappa$ and a sequence $1\leq i_1<...<i_k \leq n$, we can find a subvariety in $\mathcal{P}_\kappa$, representing Kahler surfaces blowing up from $u_0$ and containing curves in class $\delta-e_{i_1}-...-e_{i_k}$, we call it $\mathcal{H}^\delta_\kappa(i_1,...i_k)$. \\
Taking a holomorphic disk $\mathbb{D}^\delta$ intersecting $H^\delta_\kappa $ transversally at $u_0$, we can define another subvariety of $\mathcal{P}_\kappa$:
\begin{equation}\label{equa: definition of D^delta_kappa}
\mathcal{D}^\delta_\kappa:=(\mathbb{D}^\delta\times \Conf_n(X))\cap \mathcal{P}_\kappa
\end{equation}
Since $\mathcal{H}^\delta_\kappa(i_1,...i_k)$ is a subvariety of $\mathcal{D}^\delta_\kappa$, we can take a tubular neighborhood of it in $\mathcal{D}^\delta_\kappa$, and identify it with the normal bundle of $\mathcal{H}^\delta_\kappa(i_1,...i_k)$. We  define $\mathcal{F}^\delta_\kappa(i_1,...,i_k)$ to be a fiber of the normal bundle, and $\mathcal{S}^\delta_\kappa(i_1,...,i_k)$ to be the boundary of $\mathcal{F}^\delta_\kappa(i_1,...,i_k)$. In other word, $\mathcal{S}^\delta_\kappa(i_1,...,i_k)$ is link of $\mathcal{H}^\delta_\kappa(i_1,...i_k)$ in $\mathcal{D}^\delta_\kappa$. We take a smooth family \{$(X_t,\omega_t)$\} of Kahler manifolds parametrized by $\mathcal{F}^\delta_\kappa(i_1,...,i_k)$ s.t. $[\omega_t]=\kappa-\lambda(e_1+...+e_n)$ for all $t\in \mathcal{S}^\delta_\kappa(i_1,...,i_k)$. Restriction of $\{(X_t,\omega_t)\}$ on $\mathcal{S}^\delta_\kappa(i_1,...,i_k)$ defines an element in $\pi_{2k-1}(S_{\kappa-\lambda(e_1+...+e_n)})$ since $\mathcal{S}^\delta_\kappa(i_1,...,i_k)$ defines a smoothly trivial family of complex manifolds in $\mathcal{A}_{\kappa-\lambda(e_1+...+e_n)}$, we call it $[\mathcal{S}^\delta_\kappa(i_1,...,i_k)]$. Its monodromy in Kronheimer's fibration, i.e. $\partial_{2k-1,*}([\mathcal{S}^\delta_\kappa(i_1,...,i_k)])$, is independent of the choice of smooth trivializations of the smooth Kahler families.\\
The second step of the proof will be the application of gauge-theoretic invariant on the symplectic families, showing that monodromies of the families define infinitely generated groups.\\
Although the general relation between family Seiberg Witten invariant and family Gromov Witten invariant is not yet known, following the work in \cite{Kronheimer1998},\cite{Smirnov2020},\cite{Smirnov2022},\cite{Smirnov2023}, we can identify the counting of certain curves with the counting of solutions of Seiberg-Witten equation in some special cases. Let $\{(X_t,\omega_t)|t\in \mathbb{D}^{2n}\}$ be a Kahler family and $C\subseteq X_0$ be an effective divisor, the Kahler forms and Kahler metrics naturally define a family Seiberg-Witten equation $\FSW_{[C]}$. By \cite{FriedmanMorgan1995},\cite{Kronheimer1998},\cite{Smirnov2020},  $C$  represents a regular solution of $\FSW(\mathfrak{s}_{[C]})$ if and only if the homomorphism
$$\T_0\mathbb{D}^{2n}\xrightarrow{\KS} H^1(X_0,\Theta_{X_0})\xrightarrow{r}H^1(C,\mathcal{N}_{C|X_t})$$
is surjective. Here, $\KS$ is Kodaira-Spencer map and $r$ is the restriction map.\\
When $h^1(C,\mathcal{N}_{C|X_t})$ is large, it will be difficult to verify transversality directly. In this paper, by studying the tangent spaces of deformation functors of complex manifolds and pairs of complex manifolds, we get a criterion of transversality of isolated solutions of family Seiberg-Witten equation on certain blowup families of closed Kahler surfaces$\autoref{thm:regularity of FSW solutions on blow up family}$. And with Theorem $\autoref{thm:regularity of FSW solutions on blow up family}$, we can relate the calculation of higher dimensional Kronheimer's invariants to simpler cases. As a result, we can evaluate Kronheimer's invariant on Kahler families $\mathcal{S}^\delta_\kappa(i_1,...,i_k)$ via some inductions.\\  
Let $(M,\omega)$ be a Kahler manifold diffeomorphic to a torus $\T^4$ and $(X,\tilde{\omega})$ be its blowup. Smirnov refines Kronheimer's invariant and defines a new invariant, a homomorphism\cite{Smirnov2023} 
$q:\pi_1(S_{[\omega]})\rightarrow\bigoplus_{\delta \in \Delta_1} \mathbb{Z}_2$ with following properties:
\begin{proper}\label{proper:vanishing of Im(Diff) by q-homomorphism}
$q(\alpha)=0$ when $\alpha\in \Image(p_*:\pi_{1}(\Diff_0(X))\rightarrow \pi_{1}(S_{\kappa-\lambda e_1}))$.
\end{proper}
\begin{proper}\label{proper: surjectivity of q} 
$q(\mathcal{S}_\delta(1))=\bigoplus_{\delta \in \Delta_1} \mathbb{Z}_2$, i.e. $q$ is surjective.
\end{proper}
We'll extend Smirnov's $q$ to be a series of homomorphisms, 
\begin{align}\label{equa:q_n,k}
q_{n,k}:\pi_{2k-1}(S_{\kappa-\lambda(e_1+...+e_n)})\bigoplus_{\delta \in \Delta_k} \mathbb{Z}_2 & \quad (\text{ blowup of }\T^4 )\\
q_{n,k}:\pi_{2k+1}(S_{\kappa-\lambda(e_1+...+e_n)})\bigoplus_{\delta \in \Delta_k} \mathbb{Z}_2 & \quad (\text{blowup of } K3 \text{ surface })
\end{align} 
with properties similar to $\autoref{proper:vanishing of Im(Diff) by q-homomorphism},\autoref{proper: surjectivity of q}$. And with the result of Theorem$\eqref{thm:regularity of FSW solutions on blow up family}$, the homomorphisms are calculable. As a result, we can get a series of surjective homomorphisms 
\begin{align}
\Ker(i_{2k-2,*}: \pi_{2k-2}(\Symp_s(X,\omega_k)) &\rightarrow \pi_{2k-2}(\Diff_0(X))) \cong \nonumber \\
\Coker(p_{2k-1,*}: \pi_{2k-1}(\Diff_0(X)) &\rightarrow \pi_{2k-1}(S_{\kappa-\lambda(e_1+...+e_n)})) \rightarrow \mathbb{Z}_2^\infty & \quad (\text{ blowup of }\T^4) \nonumber \\
\Ker(i_{2k,*}: \pi_{2k}(\Symp_s(X,\omega_k)) &\rightarrow \pi_{2k}(\Diff_0(X))) \cong \nonumber \\
\Coker(p_{2k+1,*}: \pi_{2k+1}(\Diff_0(X)) &\rightarrow \pi_{2k+1}(S_{\kappa-\lambda(e_1+...+e_n)})) \rightarrow \mathbb{Z}_2^\infty & \quad (\text{blowup of } K3 \text{ surface }) \nonumber
\end{align}
The construction of parameter spaces and homotopy classes in symplectomorphism groups for Enriques surface is similar to the construction above, the main difference is that we'll study an equivariant version of Kronheimer's fibration
\begin{equation}\label{equa: equivariant version of Kronheimer's fibration}
\Symp^\pi(X,\pi^*\omega) \rightarrow \Diff^\pi_0(X)\rightarrow S^\pi_{[\omega]}
\end{equation}
And we'll  relate some elements in $\Ker(i_{2k,*}: \pi_{2k}(\Symp_s(X,\omega_k)) \rightarrow \pi_{2k}(\Diff_0(X)))$ to monodromy of $\rho-$invariant $S^{2k+1}$ Kahler $K3$ families.\\ 
In $\textbf{section 2}$, we'll introduce some preliminary knowledge in complex and symplectic geometry. In $\textbf{section 3}$, we introduce the gauge theoretic invariants which are introduced by Kronheimer \cite{Kronheimer1998} and Smirnov\cite{Smirnov2022},\cite{Smirnov2023} and extend the latter to higher dimensional case. Also, we'll prove a regularity lemma for isolated solution of family Seiberg-Witten equation over blowup family. In $\textbf{section 4}$, we construct deformation families of the Kahler surfaces and calculate Smirnov's $q$ invariant in higher-dimensional cases.\\
$\textbf{Acknowledgement}$. The author would like to greatly thank Jianfeng Lin for the suggestion to study the topology of symplectomorphism group and for a lot of helpful discussion during the research. I'm also very grateful to my advisor, Tianjun Li, for a lot of helpful advice about the paper and help in symplectic geometry and guage theory. I'd like to thank Gleb Smirnov for kindly explaining his paper to me. I would like to thank Alexander Voronov and Weiwei Wu for helpful discussion about complex geometry. And I would like to thank Hokuto Konno and Jun Li for their interest in our paper. Finally, I'd like to thank ChatGPT for help in latex writing.

\section{Preliminary knowledge in Kahler geometry and symplectic geometry}\label{sec: Preliminary knowledge in Kahler geometry and symplectic geometry}
\subsection{Ample cone and Kahler cone}\label{sebsec: Ample cone and Kahler cone}
In this section, let $X$ be a smooth closed complex manifold. 
\begin{defi}\label{defi:ample divisor}
A line bundle $L$ over $X$ is called $\textbf{very ample}$ if it has enough sections to give a closed immersion (or, embedding) $$f:X\rightarrow\mathbb{CP}^N$$
And in this case, $f^*H=L$. Here, $H$ is the hyperplane line bundle of $\mathbb{CP}^N$. A line bundle $L$ is $\textbf{ample}$ if some positive power of $L$ is very ample. 
\end{defi}
\begin{defi}\label{defi:ample cone}
For a complex surface $M$, its ample cone $\mathcal{C}_A$ is defined to be the cone in $H^2(X;\mathbb{Z})\otimes\mathbb{R}$ spanned by ample divisors.
\end{defi}
There is a criterion for ample cones on projective surfaces.
\begin{thm}\label{thm: Nakai Moishezon}
(Nakai-Moishezon) For a smooth projective surface $X$, if $D$ is a divisor on $X$ satisfying $\langle D, D\rangle >0$ and $\langle D,C\rangle >0$ for all curves on $X$, then $D$ is ample. In particular, ampleness depends only on the numerical equivalence class of $D$.
\end{thm}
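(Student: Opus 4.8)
The plan is to establish the substantive implication — that $\langle D,D\rangle>0$ and $\langle D,C\rangle>0$ for every irreducible curve $C\subset X$ force $D$ to be ample — since the converse, and with it the "numerical" clause, is elementary: if $mD$ is very ample it is the pullback of the hyperplane class under an embedding, so $D\cdot C=\tfrac{1}{m}\deg\overline{C}>0$ and $D^{2}=\tfrac{1}{m^{2}}\deg X>0$, and both sides of these inequalities depend only on the numerical class of $D$. Because $D$ is ample iff some positive multiple of it is, I am free to replace $D$ by a positive multiple at any stage, the hypotheses being preserved. The tools are Riemann–Roch and Serre duality on the surface $X$, Serre's theorems giving very-ampleness and higher-cohomology vanishing for high powers of an ample bundle, and the fact that the pullback of an ample line bundle along a finite morphism is ample.

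First I would reduce to the case that $D$ is effective. Riemann–Roch gives $\chi(\mathcal{O}_X(nD))=\chi(\mathcal{O}_X)+\tfrac12(n^{2}D^{2}-nD\cdot K_X)\to+\infty$ since $D^{2}>0$. Fixing an ample divisor $A$, a positive multiple of $A$ is linearly equivalent to an effective sum of irreducible curves, whence $D\cdot A>0$ by hypothesis; therefore $(K_X-nD)\cdot A<0$ for $n\gg0$, so $K_X-nD$ is not effective and $h^{2}(\mathcal{O}_X(nD))=h^{0}(\mathcal{O}_X(K_X-nD))=0$ by Serre duality. Then $h^{0}(\mathcal{O}_X(nD))\ge\chi(\mathcal{O}_X(nD))>0$ for $n\gg0$, and replacing $D$ by such a multiple we may assume $D$ is an effective divisor.

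The heart of the argument is to bootstrap ampleness up from the curve $D$ itself. Writing $C_1,\dots,C_r$ for the irreducible components of $D$, the restriction $\mathcal{O}_X(D)|_D$ has degree $D\cdot C_i>0$ on each $C_i$, hence is ample on the (possibly reducible, non-reduced) one-dimensional scheme $D$; so there is $n_0$ with $H^{1}(D,\mathcal{O}_D(nD))=0$ and $\mathcal{O}_D(nD)$ base-point-free for all $n\ge n_0$. Tensoring $0\to\mathcal{O}_X(-D)\to\mathcal{O}_X\to\mathcal{O}_D\to0$ by $\mathcal{O}_X(nD)$ and passing to cohomology, for $n\ge n_0$ the maps $H^{1}(\mathcal{O}_X((n-1)D))\to H^{1}(\mathcal{O}_X(nD))$ are surjective; this being a non-increasing sequence of non-negative integers, it stabilizes, and once constant these surjections are isomorphisms, which forces the restriction $H^{0}(\mathcal{O}_X(nD))\to H^{0}(\mathcal{O}_D(nD))$ to be surjective for $n\gg0$. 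Hence $|nD|$ has no base point on $D$, and since the divisor $nD$ is itself a member of $|nD|$ supported on $D$, no point off $D$ is a base point either. Thus for $n\gg0$ the system $|nD|$ is base-point-free and defines a morphism $\phi\colon X\to\mathbb{P}^{N}$ with $\phi^{*}\mathcal{O}(1)=\mathcal{O}_X(nD)$.

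Finally, if $\phi$ contracted an irreducible curve $C$ then $nD\cdot C=\deg(\phi^{*}\mathcal{O}(1)|_C)=0$, contradicting $D\cdot C>0$; so $\phi$ contracts no curve, and being a proper morphism from a surface with no contracted curve it has finite fibers, hence is finite onto its image. Since $\mathcal{O}(1)$ restricts to an ample bundle on $\phi(X)\subset\mathbb{P}^{N}$ and ampleness pulls back along finite morphisms, $\mathcal{O}_X(nD)$ — and therefore $\mathcal{O}_X(D)$ — is ample. The step I expect to be the real obstacle is the third one: not any single estimate, but the structural point that the positivity hypotheses let one transfer ampleness from the elementary one-dimensional situation on $D$ back up to $X$. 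The technical care goes into the ampleness and vanishing statements on the singular curve $D$, and into the stabilization-of-$h^{1}$ argument that makes $|nD|$ base-point-free.
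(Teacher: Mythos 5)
The paper states this classical criterion without proof, simply citing it as the Nakai--Moishezon theorem, so there is no in-paper argument to compare against. Your proposal is the standard textbook proof (essentially Hartshorne, Thm.\ V.1.10): reduce to an effective $D$ via Riemann--Roch and Serre duality, restrict to the one-dimensional scheme $D$ where positivity of degrees gives ampleness, bootstrap base-point freeness of $|nD|$ through the stabilization of $h^1(\mathcal{O}_X(nD))$, and conclude by pulling back an ample bundle along the resulting finite morphism; each step, including the handling of the non-reduced curve $D$ and the converse/numerical clause, is carried out correctly.
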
 
From the definition of ample line bundle, we can see that for any rational point $\kappa\in\mathcal{C}_A$, there is a Kahler form $f^*\omega_{FS}$ in $\kappa$(pull back of Fubini-Study form). This provides us a method determining Kahler classes of a complex surface.
\begin{lem}\label{lem: Kahler classes of a complex surface} (\cite{Buchdahl1999},\cite{LatschevMcduffSchlenk2013})
Let (X,J) be a compact complex surface. A cohomology class $\alpha$ in $H^{1,1}(X;R)$ admits a Kahler representative compatible with the complex structure $J$ if 
$$ \alpha \cup [\rho] > 0$$ for some positive closed $(1,1)-$form $\rho$ on X and $\alpha$ is in the ample cone.    
\end{lem}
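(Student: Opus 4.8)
The plan is to deduce the lemma from the Buchdahl--Lamari analogue of the Nakai--Moishezon criterion (\autoref{thm: Nakai Moishezon}), which holds on an arbitrary compact Kähler surface, after using the hypotheses to verify the numerical positivity that it requires.

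First I would note that the existence of the positive closed $(1,1)$-form $\rho$ says exactly that $(X,J)$ is a compact Kähler surface, so that the $\partial\bar\partial$-lemma holds and Demailly's regularization theory for closed positive $(1,1)$-currents is available. The Buchdahl--Lamari theorem then asserts: a real $(1,1)$-class $\alpha$ on $X$ has a Kähler representative compatible with $J$ if and only if $\alpha^2>0$, $\alpha\cdot[\rho']>0$ for some Kähler class $[\rho']$, and $\alpha\cdot C>0$ for every irreducible curve $C\subset X$; equivalently, the Kähler cone is the connected component, containing Kähler classes, of the set of $(1,1)$-classes of positive square that are numerically positive on all curves. In this formulation the $\rho$-pairing in the hypothesis is precisely what picks out the correct connected component --- it is genuinely needed, since $-\alpha$ could otherwise satisfy the square condition (e.g. on a curveless $K3$).

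Next I would check the curve- and square-positivity from the assumption that $\alpha$ lies in the ample cone, i.e. $\alpha=\sum_i c_iA_i$ with $c_i>0$ and each $A_i$ ample. For an irreducible curve $C$, the embedding of $X$ into projective space given by a very ample multiple $mA_i$ sends $C$ to a curve of positive degree, so $mA_i\cdot C>0$ and hence $A_i\cdot C>0$; applying this to a generic hyperplane section representing $mA_j$ gives $A_i\cdot A_j>0$; and the degree of $X$ under that embedding equals $m^2A_i^2>0$. Therefore $\alpha\cdot C=\sum_i c_i(A_i\cdot C)>0$ for every irreducible curve $C$ and $\alpha^2=\sum_{i,j}c_ic_j(A_i\cdot A_j)>0$. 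Combining these with the given $\alpha\cdot[\rho]>0$ and invoking Buchdahl--Lamari produces the desired Kähler representative of $\alpha$.

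I expect the only genuinely hard input to be the Buchdahl--Lamari theorem itself, which I would cite rather than reprove. When $X$ is projective and $\alpha$ is rational one can avoid it entirely --- a very ample multiple of $\alpha$ gives a Fubini--Study pullback lying in $\alpha$, and one concludes by openness and convexity of the Kähler cone together with density of the rational classes in the ample cone --- but for a possibly non-projective Kähler surface, or for the irrational classes $\kappa-\lambda\sum_i e_i$ that occur in our applications, this shortcut fails and one really needs the current-regularization and mass-estimate arguments of Buchdahl and Lamari. (In the non-projective case one should read ``ample cone'' as the cone of $(1,1)$-classes that are numerically positive on every irreducible curve, since ample divisors exist only in the projective setting; for the surfaces considered here the relevant classes do satisfy this condition.)
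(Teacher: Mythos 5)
The paper offers no proof of this lemma---it is quoted directly from Buchdahl and Latschev--McDuff--Schlenk---and your argument is exactly the intended one: invoke the Buchdahl--Lamari Nakai--Moishezon criterion for compact K\"ahler surfaces (with $\rho$ guaranteeing the K\"ahler condition and selecting the correct component) and verify $\alpha^2>0$ and $\alpha\cdot C>0$ from membership in the ample cone. Your verification of those numerical conditions, and your caveat about how to read ``ample cone'' in the non-projective case, are both correct, so this matches the paper's (cited) approach.
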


\subsection{Complex and Kahler structures on Calabi-Yau surface}\label{subsec:2.2}
We consider a complex tori $X:=\mathbb{C}^2/L$, where $L$ is a lattice in $\mathbb{C}^2$. We know $H^1(X;\mathbb{Z})\cong \mathbb{Z}^4$, and we pick a basis $\{u_1,u_2,u_3,u_4\}$ of $H^1(X;\mathbb{Z})$. We call the basis admissible if the cup product $u_1\cup u_2 \cup u_3 \cup u_4$ defines the natural orientation of $X$.  
\begin{thm}\label{lem: Shoida}(Shioda)\cite{Shioda1978}
Let $X$ and $Y$ be two complex tori of dimension $2$, and suppose that there is an
admissible isomorphism $f^*:H^1(Y ;\mathbb{Z}) \rightarrow H^1(X;\mathbb{Z})$. Then $f^*$ is induced by a biholomorphism $f : X \rightarrow Y$ if and only if $f^*\wedge f^*$ takes $H^{2,0}(Y)$ to $H^{2,0}(X)$.\\
Here, an admissible isomorphism is an isomorphism from $H^1(X;\mathbb{Z})$ to $H^1(Y;\mathbb{Z})$, taking an admissible basis of $H^1(X;\mathbb{Z})$ to that of $H^1(Y;\mathbb{Z})$.
\end{thm}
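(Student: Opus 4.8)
This is a Torelli-type statement for complex $2$-tori: the ``only if'' direction is the formal functoriality of Hodge structures, while the ``if'' direction carries the content. The plan is to dualize from $H^1$ to the first homology lattice, extend scalars to $\mathbb{R}$ to obtain a real torus isomorphism realizing $f^*$, and then decide holomorphy by a linear-algebra statement about decomposable $2$-vectors. Concretely, write $X=V_X/L_X$ with $L_X=H_1(X;\mathbb{Z})$ and $V_X=L_X\otimes_{\mathbb{Z}}\mathbb{R}$ carrying the complex structure $J_X$, and similarly for $Y$. Then $H^1(X;\mathbb{Z})=\mathrm{Hom}(L_X,\mathbb{Z})$, so an admissible isomorphism $f^*\colon H^1(Y;\mathbb{Z})\to H^1(X;\mathbb{Z})$ is the transpose of a unique isomorphism $f_*\colon L_X\to L_Y$; extending $f_*$ over $\mathbb{R}$ gives $\phi\colon V_X\to V_Y$, which descends to a real-analytic group isomorphism $\bar\phi\colon X\to Y$ with $\bar\phi^*=f^*$ on cohomology. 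A group isomorphism of complex tori is a biholomorphism precisely when its linearization commutes with the complex structures, so everything reduces to deciding when $\phi\circ J_X=J_Y\circ\phi$. Throughout I use that for a torus $H^\bullet\cong\wedge^\bullet H^1$ as graded rings, compatibly with pullbacks and with the Hodge decomposition, so that $H^{2,0}(X)=\wedge^2 H^{1,0}(X)$ inside $\wedge^2 H^1(X;\mathbb{C})$ and $\bar\phi^*$ on $H^2$ coincides with $f^*\wedge f^*$. The ``only if'' direction is then immediate: if $f^*$ comes from a biholomorphism $f$, then $f^*=f^*\wedge f^*$ on $H^2$ respects the Hodge decomposition, hence carries $H^{2,0}(Y)$ into, and by injectivity onto, $H^{2,0}(X)$.

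For the ``if'' direction, assume $(f^*\wedge f^*)(H^{2,0}(Y))=H^{2,0}(X)$. The key observation is that $H^{2,0}(X)=\wedge^2 H^{1,0}(X)$ is a line in $\wedge^2 H^1(X;\mathbb{C})$ spanned by a \emph{decomposable} $2$-vector, namely $dz_1\wedge dz_2$, and that a nonzero decomposable $2$-vector $v\wedge w$ determines its support $\langle v,w\rangle\subseteq H^1$ uniquely; here this support is exactly $H^{1,0}(X)$. Since $f^*_{\mathbb{C}}$ is a linear isomorphism on $H^1$, it carries the support of a decomposable $2$-vector to the support of its image under $f^*\wedge f^*$; applying this to a generator of $H^{2,0}(Y)$ and invoking the hypothesis yields $f^*_{\mathbb{C}}(H^{1,0}(Y))=H^{1,0}(X)$. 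As $f^*_{\mathbb{C}}$ is the complexification of a map defined over $\mathbb{R}$, it commutes with conjugation, so it also sends $H^{0,1}(Y)=\overline{H^{1,0}(Y)}$ to $H^{0,1}(X)$; thus $f^*_{\mathbb{C}}$ preserves the full Hodge decomposition of $H^1$. Dualizing, $\phi_{\mathbb{C}}$ preserves the $\pm i$-eigenspace decomposition of $V\otimes\mathbb{C}$ for $J$, i.e.\ $\phi\circ J_X=J_Y\circ\phi$; hence $\phi$ is $\mathbb{C}$-linear, $\bar\phi$ is a biholomorphism $X\to Y$, and $\bar\phi^*=f^*$. (Note that $\phi$ being $\mathbb{C}$-linear and invertible forces $\det_{\mathbb{R}}\phi=|\det_{\mathbb{C}}\phi|^2>0$, so admissibility of $f^*$ is automatic in this direction; its role in the hypothesis is essentially to exclude the antiholomorphic case, for which $f^*\wedge f^*$ would carry $H^{2,0}(Y)$ into $H^{0,2}(X)$ instead.)

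The argument has no deep obstruction; the difficulty is entirely bookkeeping, and the two points that need care are: (i) the naturality of the identification $H^\bullet\cong\wedge^\bullet H^1$ and of $H^{2,0}=\wedge^2 H^{1,0}$, in particular their compatibility with pullback and with complex conjugation, so that the cup-product hypothesis really is a statement about $f^*_{\mathbb{C}}$ on $H^1$; and (ii) the claim that a nonzero decomposable $2$-vector recovers its support and that $H^{2,0}(X)$ is decomposable with support $H^{1,0}(X)$ — this is precisely what makes the period line $H^{2,0}$ remember the complex structure, and it is the step on which the converse hinges. With these in place, the passage through $H_1$, the descent of $\phi$ to $\bar\phi$, and the identification $\bar\phi^*=f^*$ are routine.
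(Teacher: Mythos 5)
Your proof is correct: the reduction to a lattice isomorphism, the identification $H^{2,0}=\wedge^2H^{1,0}$ together with the support of a decomposable $2$-vector to recover $H^{1,0}$, and the conjugation/duality step forcing $\phi J_X=J_Y\phi$ is precisely the standard Hodge-theoretic argument behind Shioda's theorem. The paper itself offers no proof (it cites Shioda), so there is nothing further to compare; your parenthetical observation that admissibility is automatic once $f^*\wedge f^*$ preserves $H^{2,0}$ is also accurate.
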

By this fact, we can define a ``deformation space'' of $\textbf{all complex structures}$ on $T^4$. And we'll explain the explicit definition of it in next subsection.\\
Now we let $L$ be an abstract free abelian group of rank $4$, and let $e_1,e_2,e_3,e_4$ be the basis.
\begin{multline}\label{eq: R}
R:=\{ r\in Hom(L,\mathbb{C}^2) \text{ is a group homomorphism}|\mathbb{C}^2/r(L) \text{ is compact and }\\ r(e_1), r(e_2), r(e_3), r(e_4) \text{ gives the positive orientation of } \mathbb{C}^2\}  
\end{multline}
\begin{defi}\label{defi: Phi torus}
$\Phi_{T^4}:=R/GL(2,\mathbb{C})$, where the action of $GL(2,\mathbb{C})$ is defiend as $g:r\rightarrow g\circ r$.
\end{defi}
The action of $GL(2,\mathbb{C})$ defines isomorphisms between complex structures on $T^4$ parametrized by different points in $R$, we can see that $R$ is a parameter space of all complex torus $T^4$.\\
By Shioda\cite{Shioda1978}, we can construct a projective variety which is isomorphic to $\Phi_{T^4}$ from the intersection form of $H^2(T^4;\mathbb{R})$.\\ 
Define $E$ to be the lattice $\Lambda^2(L^*)$ and $E_\mathbb{R}:= E\otimes \mathbb{R}$, $E_\mathbb{C}:=E\otimes\mathbb{C}$. Define $\langle\ ,\ \rangle$ to be the bilinear form on $E_\mathbb{C}(E_\mathbb{R})$ induced from the cup product on $\Lambda^2(L^*)$. 
\begin{defi}
The period domain $\mathcal{M}_{T^4}$ of complex torus $T^4$ is defined as 
$$\{u\in H^{\oplus 3}\otimes \mathbb{C}|\langle u,u\rangle =0,\langle u,\bar{u}\rangle >0\}/\mathbb{C}^*.$$
\end{defi}  
By Shoida\cite{Shioda1978}, $\Phi_{T^4}$ is biholomorphic to $\mathcal{M}_{T^4}$, so we can identify these two spaces.\\
For a positive class $\kappa\in H^2(T^4;\mathbb{R})$, the space $\{u\in\mathcal{M}_{T^4}|\langle\kappa,\ u\rangle =0\}$ consists of two contractible components\cite{Smirnov2023}. And we define $\kappa-$polarized period domain, $\mathcal{M}_{\T^4,\kappa}
$ to be one component of the space s.t. for any $u\in \mathcal{M}_{\T^4,\kappa}$, $\kappa$ is a Kahler class on the complex surface parametrized by $u$.\\
Naturally, we can define a holomorphic fiber bundle $p: \mathcal{X} \rightarrow M_{T^4,\kappa}$ with each fiber $X_t$ over $t\in M_{T^4,\kappa}$ a complex 2-torus. And we can define a family of Kahler forms $\Omega_t$ over $X_t$ s.t. $[\Omega_t]=\kappa$. In particular, if period point of $X$ is in $\mathcal{M}_{T^4,\kappa}$, then $\kappa$ is in the ample cone of $X$.\\
Now we introduce some important parameter spaces corresponding to $\mathcal{M}_{\T^4,\kappa}$.
\begin{defi}\label{defi: M_kappa,lambda}
$\mathcal{M}_{\T^4,\kappa,\lambda}$ is defined as subspace of $\mathcal{M}_\kappa$ parametrizing the complex torus s.t. its blow-up admits a Kahler form in the class $\kappa-\lambda e$.\\
Now we define some parameter spaces of $n-$point blowup of a torus:
\begin{equation}\label{equa: parameter space 1}
\{(u,x_1,...,x_n)\in\mathcal{M}\times (T^4)^n|x_1=[1,0]\} 
\end{equation}
\begin{equation}
\{(u,x_1,...,x_n)\in\mathcal{M}_\kappa\times (T^4)^n|x_1=[1,0] \}
\end{equation} 
Here, each point in the parameter spaces represents the $n$point blowup of $X$ at corresponding points.
$\mathcal{M}_{\T^4,\kappa,\lambda_1,...,\lambda_n}$ consists of elements in $\mathcal{M}_\kappa\times (T^4)^n$ which represents the complex $T^4\# n\overline{\mathbb{CP}^2}$ admitting a Kahler form in the class $\kappa-\lambda_1 e_1-...-\lambda_n e_n$.
\end{defi}
\begin{lem}\label{lem:parameter space for M_kappa,lambda}(Lemma 
$8$ in \cite{Smirnov2023})
If $\kappa$ is non-resonant, then $\mathcal{M}_{\T^4,\kappa,\lambda}$ is open and dense in $\mathcal{M}_{T^4,\kappa}$. Also, $\mathcal{M}_{\T^4,\kappa,\lambda}$ is connected.
\end{lem}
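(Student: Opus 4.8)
The plan is to realize $\mathcal{M}_{\T^4,\kappa,\lambda}$ as the complement, inside $\mathcal{M}_{T^4,\kappa}$, of a countable union of complex-analytic hypersurfaces, and then deduce openness, density, and connectedness from this description. First I would characterize the failure locus. For a complex torus $X_t$ with $t\in\mathcal{M}_{T^4,\kappa}$, the blow-up $\widetilde{X}_t$ admits a Kähler class $\kappa-\lambda e$ precisely when $\kappa-\lambda e$ lies in the Kähler cone of $\widetilde{X}_t$; by \autoref{lem: Kahler classes of a complex surface} (the Buchdahl/Lalonde–McDuff–Schlenk criterion) and the Nakai–Moishezon-type positivity, the only obstruction is the presence of an irreducible curve $C\subset \widetilde{X}_t$ with $\langle \kappa-\lambda e, [C]\rangle \le 0$. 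Since $\kappa^2>0$ and $\kappa$ is non-resonant, and $\lambda$ is small, such a $C$ must be (the proper transform of) a curve whose class $\delta\in H^2(T^4;\mathbb{Z})$ satisfies $\langle \kappa,\delta\rangle \le \lambda$ together with the constraint forced by the adjunction/genus inequality on a torus, so $\delta^2=0$ and $\delta$ is effective on $X_t$; effectivity of such a $\delta$ translates, via the Hodge-theoretic description of $\mathcal{M}_{T^4}$, into the linear condition $\langle u_t,\delta\rangle=0$ on the period point. Thus the bad set is $\bigcup_{\delta} (H^\delta_\kappa \cap \mathcal{M}_{T^4,\kappa})$, where $\delta$ runs over the finite (by non-resonance, the pairing $\langle\kappa,\cdot\rangle$ takes each value finitely often once we bound it) set of primitive isotropic classes with $0<\langle\kappa,\delta\rangle\le\lambda$, and each $H^\delta_\kappa$ is a hyperplane section, hence a proper complex-analytic subvariety of the irreducible domain $\mathcal{M}_{T^4,\kappa}$.

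Granting that, openness is immediate: the complement of a finite union of closed analytic subsets is open. Density follows because each $H^\delta_\kappa$ has positive complex codimension in the (connected, hence irreducible) period domain $\mathcal{M}_{T^4,\kappa}$, so its complement is dense; and a finite union of such sets still has dense complement. For connectedness I would use that $\mathcal{M}_{T^4,\kappa}$ is a domain of type $IV$, in particular a connected complex manifold of complex dimension at least $2$, and that removing a finite union of complex-analytic subsets of complex codimension $\ge 1$ from a connected complex manifold of complex dimension $\ge 2$ leaves a connected (indeed path-connected) space — the standard transversality/general-position argument that a path between two points can be perturbed off a codimension-$\ge 2$ real subset, applied to the real codimension-$\ge 2$ locus $\bigcup_\delta H^\delta_\kappa$. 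This is exactly Smirnov's Lemma 8, so I would largely follow \cite{Smirnov2022}, simply recording the key points.

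The main obstacle — and the only genuinely nontrivial input — is the curve-classification step: showing that a destabilizing irreducible curve on the blow-up must come from an isotropic integral class $\delta$ on $T^4$ with small $\kappa$-pairing, and that, conversely, on the hyperplane $H^\delta_\kappa$ such a class is actually represented by an effective divisor (so the locus is exactly $\bigcup_\delta H^\delta_\kappa$, not just contained in it). This requires the combination of adjunction on $T^4$ (there are no rational or low-genus curves forcing negative self-intersection except in the isotropic classes), the behavior of curves under a single blow-up, and the fact that non-resonance of $\kappa$ rules out the boundary case $\langle\kappa,\delta\rangle=0$ and makes the relevant index set finite. Once this geometric dictionary between "bad $X_t$" and "period point on some $H^\delta_\kappa$" is in place, the topology (open, dense, connected) is formal.
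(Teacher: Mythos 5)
There is a genuine gap: your finiteness claim is false, and the openness argument built on it collapses. The set of primitive isotropic classes $\delta$ with $0<\langle\kappa,\delta\rangle\le\lambda$ is (up to the boundary value) exactly the index set $\Delta_1$ of the paper, and the whole point of the main theorem is that this set is \emph{infinite} when $\kappa$ is non-resonant: the values $\langle\kappa,\delta\rangle$ for isotropic $\delta$ accumulate in $(0,\lambda)$, and $q$ surjects onto $\bigoplus_{\delta\in\Delta_1}\mathbb{Z}_2$, which would be pointless if $\Delta_1$ were finite. Non-resonance only guarantees $\langle\kappa,\delta\rangle\neq 0$; it does not bound the number of classes with small positive pairing. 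Consequently your ``bad locus'' is a countable, a priori not locally finite, union of hyperplane sections $H^\delta_\kappa$, and ``the complement of a finite union of closed analytic subsets is open'' no longer applies; without proving local finiteness of this arrangement (which is not attempted, and is delicate precisely because infinitely many $\delta$ have small $\kappa$-area) your openness step fails. The density and connectedness parts survive with the word ``finite'' replaced by ``countable'': the complement of countably many proper analytic subsets is dense by a Baire argument, and a path can be perturbed off a countable union of real-codimension-$\ge 2$ analytic subsets, so those conclusions can be patched.

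Note also that the paper avoids your hardest step entirely. It does not characterize the bad locus: openness is quoted from the Kodaira--Spencer stability theorem (Kählerness in a fixed class is open in a holomorphic family), and density is obtained by exhibiting a dense subset inside the good locus, namely $\mathcal{M}^{gen}_{T^4}$, the set of period points orthogonal to no nonzero integral class, on which the torus carries no curves at all and hence (for $\lambda$ small) the blow-up class $\kappa-\lambda e$ is unobstructed. So the ``converse'' effectivity direction you flag as the main obstacle is never needed there. If you want to keep your hyperplane-complement description, you must either establish local finiteness of $\{H^\delta_\kappa\}_{\delta\in\Delta_1}$ inside $\mathcal{M}_{T^4,\kappa}$ or replace your openness argument by the deformation-theoretic one.
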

\begin{proof}
By Kodaira-Spencer stability theorem\cite{KodairaSpencer1960}, the openness holds.\\
When $\lambda$ is small enough, $\mathcal{M}_{\T^4,\kappa,\lambda}$ contains a subset
$$\mathcal{M}^{gen}_{T^4}:=\{u\in\mathcal{M}_{T^4,\kappa}|\langle u, \delta \rangle \neq 0,\forall \delta\neq 0 \in H^2(T^4;\mathbb{Z}) \}$$
And we can see that $\mathcal{M}^{gen}_{T^4}$ is dense\cite{Smirnov2022}.
\end{proof}
It will not be hard to see $\mathcal{M}_{\T^4,\kappa,\lambda_1,...,\lambda_n}$ is also open and dense in $\mathcal{M}_\kappa\times (T^4)^n$ when $\lambda_1+\lambda_2+...+\lambda_n$ is small enough. Also, $pi_1(\mathcal{M}_{\T^4,\kappa,\lambda_1,...,\lambda_n})$ is open and dense in $\mathcal{M}_{T^4,\kappa}$.\\
Now we begin to introduce some facts about $K3$ surface.
\begin{defi}\label{defi: K3 surface}
A $K3$ surface is a simply connected complex algebraic surface with vanishing canonical class.
\end{defi}
For each $K3$ surface $X$, its holomorphic $(2,0)-$forms span a rank $1$ linear space and projective class of the linear space defines an element in the period domain. Similar to the $\T^4$ case, the period domain of $K3$ surface is 
\begin{equation}\label{equa: definition of period domain of K3 surface}
\mathcal{M}_{K3}:=\{u\in E| u^2=0, \langle u, \bar{u} \rangle >0\}/\mathbb{C}^*
\end{equation}
Here, $E$ is the complex vector space $H^2(X;\mathbb{Z})\otimes\mathbb{C}$ equipped with the bilinear form induced by intersection pairing.\\
$\{u\in E |\langle u, \kappa \rangle =0\}$ consists of two components, each of them is isomorphic to a bounded domain of type $IV$ in $\mathbb{C}^{20}$\cite{BarthPetersVandevenHulek2015}. Here, a bounded domain of type $IV$ is defined as
$$\{z\in\mathbb{C}^n|\ |(z,z)|^2+1-2(z,\bar{z})>0, |(z,z)|<1\}$$
where $(z,z^\prime)=\Sigma_{i=1}^n z_iz_i^\prime$. It's not hard to see that a bounded domain of type $IV$ is contractible.\\
Again, we define the $\kappa-$polarized period domain of $K3$ surfaces to be the component of $\{u\in E |\langle u, \kappa \rangle =0\}$ s.t. $\kappa$ is a Kahler class on every complex $K3$ surface parametrized by elements on it.\\
For a Kahler $K3$ surface $X$, we define $\mathcal{M}_{K3,\kappa}$ and $\mathcal{M}_{K3,\kappa_1,\lambda_2,...,\lambda_n}$ similarly. And We know that Lemma $\autoref{lem:parameter space for M_kappa,lambda}$ also holds for $K3$ surface.

\begin{defi}\label{defi: definition of Enriques surface}
If a $K3$ surface $X$ admits a free holomorphic involution $\rho: X\rightarrow X$, then its quotient space $Y$ is defined as an Enriques surface.
\end{defi}
To define the period domain of Enriques surface, we need to define an index set
\begin{equation}\label{equa: definition of V[-1]}
V[-1]:=\{\delta\in H^2(X;\mathbb{Z})|\delta \text{ is a root in }H^2(X;\mathbb{Z}), \rho^*(\delta)=-\delta\}
\end{equation}
For any $\delta\in V[-1]$, a $K3$ surface parametrized by point in $H_\delta:=\{u\in\mathcal{M}_{K3}|\langle u, \delta\rangle =0\}$ does not admit involution $\rho$, since we can't find an involution on a $K3$ surface s.t. $\rho^*D=-D$ for some effective divisor $D$\cite{BarthPetersVandevenHulek2015}.\\
By \cite{Horikawa1977}, $u\in\mathcal{M}_{K3}$ is the equivalence class of the cohomology class of the non-vanishing holomorphic $2-$form on a $K3$ surface. If a $K3$ surface parametrized by $u$ admits involution $\rho$, then $\rho^*(u)=-u$. So Enriques surfaces are corresponded to a subspace of $\mathcal{M}_{K3}$:
\begin{equation}\label{equa: M_K3 minus}
\mathcal{M}_{K3}^-:=(\{u\in E|\rho^*u=-u\}/\mathbb{C}^*)\cap \mathcal{M}_{K3}
\end{equation}
Actually, the restrictions above determine the parameter space of all complex Enriques surfaces.
\begin{defi}\label{equa: definition of period domain of Enriques surface}
The period domain of Enriques surface $\mathcal{M}_{En}$ is defined as
$$(\mathcal{M}_{K3}^-\backslash \cup_{\delta\in V[-1]}H_\delta)/\{g|_{E^-}|g\in \Aut(E), g\circ \rho^* =\rho^* \circ g\}$$
Here, $E^-$ is $(-1)-$eigenspace of $\rho^*$ on $E$.
\end{defi}

\subsection{Deformation of complex manifolds}
\label{subsec:2.3}
Let $M$ be a complex manifold.
\begin{defi}\label{defi:Deformation}
A deformation of $M$ is defined as a complex analytic family $(\mathcal{M},B,b,f)$ with a complex manifold $B$ as its parameter space such that $M=f^{-1}(b)$.    
Let $(B,b)$ be a pointed manifold, a deformation 
$$M \xrightarrow{i} \mathcal{M} \xrightarrow{f} (B,b)$$
of a compact complex manifold $M$ over $(B,b)$ is a pair of holomorphic maps $i,f$ s.t.
\begin{proper}\label{proper: deformation 1}
$f\circ i(M) =b$
\end{proper}
\begin{proper}\label{proper: deformation 2}
There exists an open neighborhood $b \in U \subseteq B$ such that $f: f^{-1}(U) \rightarrow U$ is a proper smooth family.
\end{proper}
\begin{proper}\label{proper: deformation 3}
$i: M\rightarrow f^{-1}(b)$ is an isomorphism.
\end{proper}
\end{defi}
For every pointed complex manifold $(B,b)$ we denote by $Def_M(B,b)$ the set of isomorphism classes of deformations of $M$ with base $(B,b)$. Actually, this defines a functor\cite{Manetti2022}.\\
By the definition of Kodaira\cite{Kodaira2012}, a deformation of a complex manifold $M$ can be considered as gluing of the same polydisks $\mathcal{U}_j$ by transition functions depending on the parameters in $B$. Now let
$$M\hookrightarrow\mathcal{M} \rightarrow (B,b)$$ 
be a deformation of the complex manifold $M$, we can find a chart $\mathcal{U}:=\{U_1,...,U_n\}$ of $\mathcal{M}$ and corresponding transition functions from $U_k$ to $U_j$:
$$z_j:=(z^1_j,...,z^m_j)=f_{jk}(z_k,\vec{t}) = f_{jk}(z_k,t_1,...,t_r)$$
Here, $z_j$ is coordinate in $U_j$, which is identified with an open set in $\mathbb{C}^m$, $t$ is coordinate of the base space, which is an open set in $\mathbb{C}^r$.\\
When $U_j\cap\ U_k\neq \emptyset$, we obtain $\frac{\partial f_{jk}(z_k,\vec{t})}{\partial t}$ and define
$$\theta_{jk}(t):=\sum_{\alpha=1}^m \frac{\partial f^\alpha_{jk}(z_k,\vec{t})}{\partial t} \frac{\partial }{\partial z^\alpha_j}$$
By \cite{Kodaira2012}, $\{\theta_{jk}\}$ defines an $1-$cocycle on the sheaf of holomorphic tangent vector fields on $\mathcal{M}$. Actually, we get $\theta(t)$ in $H^1(\mathcal{M}_t,\Theta_{\mathcal{M}_t})$. Here, $\Theta_{\mathcal{M}_t}$ is the tangent sheaf of $\mathcal{M}_t$. And $\theta(t)$ naturally reduces to a linear homomorphism $\rho:(\T_bB)\rightarrow H^1(M,\Theta_{M})$.
\begin{defi}\label{defi:Kodaira Spencer map}
And Kodaira-Spencer map is defined as the linear homomorphism $\rho$.
\end{defi}
\begin{thm}\label{thm: trivial Kodaira Spencer map implies trivial deformation}(\cite{Manetti2004})
A deformation $M \hookrightarrow \mathcal{M} \xrightarrow{f} (B,b)$ 
of compact manifolds is trivial if and only if $\KS_f:\T_bB\rightarrow H^1(M,\Theta_M)$ is trivial.
\end{thm}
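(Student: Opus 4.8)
The plan is to prove the two implications separately; the forward one is formal and the reverse one carries the content. \emph{Triviality implies $\KS_f=0$.} If $\mathcal M\cong M\times B$ over $B$ as a deformation, then in Kodaira's description recalled above the atlas $\{U_j\}$ and the transition functions $f_{jk}(z_k,\vec t)$ may be chosen independent of $\vec t$, so every $\theta_{jk}(t)$ vanishes and hence $\rho\equiv 0$. Invariantly: $\KS_f(v)\in H^1(M,\Theta_M)$ is exactly the obstruction to lifting $v\in T_bB$ to a holomorphic vector field on $\mathcal M$ that is $f$-related, along $M$, to a constant field on $B$, and for a product family such lifts are the obvious ones.

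\emph{$\KS_f=0$ implies triviality.} I would read the hypothesis, as is customary (cf.\ \cite{Manetti2004}), as the vanishing of the Kodaira--Spencer maps of all fibres over a neighbourhood of $b$. Shrinking, assume $B$ is a polydisc with coordinates $t_1,\dots,t_r$ and $b=0$. I would use the relative tangent sequence on $\mathcal M$,
\[
0\longrightarrow \Theta_{\mathcal M/B}\longrightarrow \Theta_{\mathcal M}\xrightarrow{\ df\ } f^{*}\Theta_{B}\longrightarrow 0,
\]
push it forward by $f$ (using $f_{*}f^{*}\Theta_B=\Theta_B$, since $f_{*}\mathcal O_{\mathcal M}=\mathcal O_B$ for compact connected fibres) to obtain a connecting map $\partial\colon\Theta_B\to R^{1}f_{*}\Theta_{\mathcal M/B}$ whose fibrewise value recovers, up to Grauert base change, the Kodaira--Spencer map of the fibre. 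The hypothesis forces $\partial=0$, so each coordinate field $\partial/\partial t_j$ lifts to a holomorphic vector field $v_j$ on $\mathcal M$ with $df(v_j)=\partial/\partial t_j$. Concretely, in the \v{C}ech picture of the paper this is the statement that, after shrinking, one can solve the coboundary equations $\theta^{(j)}_{jk}(z,\vec t)=\eta^{(j)}_j-\eta^{(j)}_k$ holomorphically in all variables and glue the local horizontal fields $\partial/\partial t_j+\eta^{(j)}_\bullet$.

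To conclude I would trivialize one coordinate at a time, by induction on $r$. Since the fibres are compact, $v_1$ integrates, for small complex time (enough after a further shrink), to a one-complex-parameter group $\varphi_s$ of biholomorphisms of $\mathcal M$ with $f\circ\varphi_s=f+s\,e_1$; then $(x,s)\mapsto\varphi_s(x)$ gives a biholomorphism $f^{-1}(\{t_1=0\})\times D_1\xrightarrow{\sim}\mathcal M$ over $D_1$, so that $f^{-1}(\{t_1=0\})\to\{t_1=0\}$ is again a deformation of $M$ with one fewer parameter and still vanishing Kodaira--Spencer map. Iterating $r$ times yields a biholomorphism $M\times B\xrightarrow{\sim}\mathcal M$ over $B$ that restricts to the identity on $M$.

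The crux, and the step I expect to be the real obstacle, is producing the \emph{global} holomorphic horizontal lifts $v_j$: this is where compactness of the fibres and the coherence/base-change properties of $R^{1}f_{*}\Theta_{\mathcal M/B}$ are needed, and it is also the reason the hypothesis must be understood as vanishing of the Kodaira--Spencer map in a neighbourhood of $b$ rather than only at $b$ (over a positive-dimensional base the two are not equivalent --- e.g.\ pulling a non-trivial family back by $t\mapsto t^2$). Once that step is in place, the relative tangent sequence, the push-forward bookkeeping, and the integration of vector fields on compact fibres are routine.
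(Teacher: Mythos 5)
The paper itself offers no proof of this statement (it is quoted from \cite{Manetti2004}), so your proposal has to stand on its own. Your forward direction is fine, and your observation that vanishing of $\KS_f$ on $T_bB$ alone cannot suffice (the $t\mapsto t^2$ pullback) is correct. But the converse, as you propose to prove it, has a genuine gap at exactly the step you yourself flag as the crux: ``the hypothesis forces $\partial=0$.'' Fibrewise vanishing of the Kodaira--Spencer maps does not imply vanishing of the sheaf-level connecting map $\Theta_B\to R^1f_*\Theta_{\mathcal{M}/B}$, because $R^1f_*\Theta_{\mathcal{M}/B}$ need not be locally free and the base-change map to $H^1(M_b,\Theta_{M_b})$ need not be injective; your phrase ``up to Grauert base change'' is precisely where an unstated hypothesis (constancy of $h^1(M_t,\Theta_{M_t})$) enters. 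Moreover this is not a repairable technicality: the statement you substituted --- $\KS$ vanishes at every point of a neighbourhood of $b$, hence the germ is trivial --- is itself false. Pull back Kodaira's jump family with central fibre $\mathbb{F}_2$ and general fibre $\mathbb{F}_0\cong\mathbb{P}^1\times\mathbb{P}^1$ by $t\mapsto t^2$: the Kodaira--Spencer map vanishes at $t=0$ by functoriality and at every $t\neq 0$ because $H^1(\mathbb{F}_0,\Theta_{\mathbb{F}_0})=0$, yet the family is not trivial in any neighbourhood of $0$, since $\mathbb{F}_2\not\cong\mathbb{F}_0$. So the same kind of example you invoked to sharpen the hypothesis also defeats the sharpened hypothesis, exactly at your lifting step.

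The classical correct statement (Kodaira--Spencer; Theorem 4.6 in Kodaira's book) assumes in addition that $\dim H^1(M_t,\Theta_{M_t})$ is independent of $t$; then $R^1f_*\Theta_{\mathcal{M}/B}$ is locally free, base change is an isomorphism, $\partial=0$ does follow from fibrewise vanishing, and the remainder of your outline --- global horizontal lifts of the coordinate fields, integration of the flows using compactness of the fibres, induction on the number of base parameters --- is the standard proof. So to complete your argument you must add such a hypothesis (constancy of $h^1$, or rigidity $H^1(M,\Theta_M)=0$), and note that any such strengthening matters for how the theorem is later applied in the paper, e.g.\ in \autoref{lem: Image of Kodiara Spencer map of a blow up family is isomorphic to ker(beta)}, where it is invoked in the pointwise form that your own counterexample already rules out.
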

\begin{defi}\label{defi: some important types of deformation}
Consider a deformation
$M\xrightarrow{i} \mathcal{M} \xrightarrow{f} (B,b)$,
$f\circ i(M)=b$, with Kodaira-Spencer map $\KS_\xi:
\T_{b}B \rightarrow H^{1}(M,\Theta_{M})$.
$\xi$ is called\\
$\textbf{Versal}$ if for every germ of complex manifold $(C,c)$, the morphism
$$\Mor_{\mathbf{\Ger}}((C,c),(B,b))\to \Def_M(C,c),\qquad
g\mapsto g^{*}\xi$$
is surjective.\\
$\textbf{Semiuniversal}$ if it is versal and
$KS_{\xi}$ is bijective.\\
$\textbf{Universal}$ if $\KS_{\xi}$ is bijective and
for every pointed complex manifolds $(C,c)$
the morphism
$$\Mor_{\mathbf{\Ger}}((C,c),(B,b))\to \Def_{M}(C,c),\qquad
g\rightarrow g^{*}\xi$$
is bijective.\\
Here, $\Mor_{\Ger}$ is the set of germs of holomorphic maps between the pointed manifolds.
\end{defi}
\begin{rmk}\label{rmk: tangent space of deformation functor}
The deformation functor $\Def_M$ can be consider as a functor from $\Art$, the category of Artin $\mathbb{C}-$algebra to $\Set$, the category of sets. And we can define
$$\T^1\Def_M:=\Def_M(\frac{\mathbb{K}[\epsilon]}{(\epsilon^2)})$$
to be the tangent space of the deformation functor.
\end{rmk}
Also, we need to consider the deformation of $(M,C)$, a pair of (complex manifold, closed complex submanifold). Similar as the deformation of a complex manifold, we take a family $(M,C)\xrightarrow{i}(\mathcal{M},\mathcal{C})\xrightarrow{f}(B,b)$ such that $\mathcal{M}$ defines a deformation family of $M$ and $\mathcal{C}$ is a subfamily of $\mathcal{M}$, defining a deformation family of $C$.\\
Following the method of Kodaira, we decompose $(\mathcal{M},\mathcal{C})$ as a collection of pairs of contractible open sets $\{(U_\alpha,C_\alpha)\}$ and transition functions parametrized by $B$. We can define holomorphic transition functions $f_{\alpha\beta}:(U_\alpha,C_\alpha)\rightarrow(U_\beta,C_\beta)$ which sends $C_\alpha$ to $C_\beta$ and get vector fields $\theta_{jk}(t)$ similar as above. For each parameter $t\in B$, $\theta_{jk}(t)$ defines $\theta(t)\in H^1(\mathcal{M}_t,\Theta_{\mathcal{M}_t}(-log C_t))$, the first cohomology of the sheaf of holomorphic tangent vector fields which is tangent to the curve $C$. And it reduces a homomorphism 
\begin{equation}\label{equa: relative Kodaira Spencer map}
\KS_C:\T_bB:\rightarrow H^1(M,\Theta_M(-logC))
\end{equation} 
The maps $\KS$ and $\KS_C$ are naturally related. We have a commutative diagram $\eqref{diagram: Kodaira Spencer}$. In the diagram, $id_1$ maps $(X_t,C_t)$ to $X_t$ and $i_*$ is induced by the natural inclusion map. Also, $H^1(M,\Theta_M(-logC))$ is the tangent space of the deformation functor $\Def_{(M,C)}$.
\begin{figure}[h]\label{diagram: Kodaira Spencer}
    \centering
   \[
\xymatrix@C=3cm@R=2cm{
  \{(M_t,C_t)\} \ar[r]^{KS_C} \ar[d]_{\pi_1} & H^1(\mathcal{M}_0,\Theta_{\mathcal{M}_0}(-log C_0)) \ar[d]^{i_*} \\
  \{M_t\} \ar[r]_{KS} & H^1(\mathcal{M}_0,\Theta_{\mathcal{M}_0})
}
\]
    \caption{Naturality between deformation of complex manifolds and pair of complex manifolds(\eqref{diagram: Kodaira Spencer})}
\end{figure}
\begin{rmk}
For any complex structure $J$ parametrized by a point in the period domain of $K3$ surface or $\T^4$, a neighborhood of the period point defines a semi-universal deformation space of $J$.    
\end{rmk}
\subsection{Negative inflation}\label{subsec:2.4}
Let $(X,\omega)$ be a closed symplectic manifold and let $\kappa=[\omega]$, and $\pi:(\tilde{X},\tilde{\omega})\rightarrow (X,\omega)$ is an $n-$point blow up of it, and the sizes of exceptional curves $E_1,...,E_n$ are $\lambda_1,...,\lambda_n$ respectively. Let $S_{[\omega]}$ be the space of symplectic forms deformation equivalent to $\omega$ in class $\kappa$ and let $S_{\kappa,\lambda_1,...,\lambda_n}$ be the space of symplectic forms on $\tilde{X}$ which are deformation equivalent to $\tilde{\omega}$ in the class $\kappa-\lambda_1 e_1-...-\lambda_n e_n$.\\
Mcduff\cite{Mcduff2000} shows that the operation ``negative inflation'' which decreases the size(s) of exceptional divisor(s) is a well-defined map $S_{\kappa-\lambda e}\rightarrow S_{\kappa-(\lambda-\mu) e}$ up to homotopy equivalence. We call the map $\alpha_\mu$. And similarly we define $\alpha_{\mu_1,...,\mu_n}$.
\begin{thm}\label{thm: negative inflation}(\cite{Mcduff2000},\cite{Smirnov2023})
Up to homotopy equivalence, there are inclusion maps 
\begin{equation}\label{equa: definition of negative inflation homomorphism alpha}
\alpha_{\mu_1,...,\mu_n} : S_{\kappa-\lambda_1e_1-...-\lambda_ne_n}\rightarrow S_{\kappa-(\lambda_1-\mu_1)e_1-...-(\lambda_n-\mu_n)e_n}
\end{equation}
such that $\alpha_0$ is the identity, $\alpha_{\mu_1+\mu_1^\prime,...\mu_n+\mu_n^\prime}$ is homotopic to $\alpha_{\mu_1,...,\mu_n}\circ\alpha_{\mu_1^\prime,...,\mu_n^\prime}$ whenever all three are defined. Also, $\alpha_{\mu_1,...,\mu_n}$ and $\alpha_{\mu_1^\prime,...,\mu_n^\prime}$ commute. And the following diagram $\eqref{diagram: negative inflation}$ is commutative:\\
\begin{figure}[h]\label{diagram: negative inflation}
    \centering
   \[
\xymatrix@C=3cm@R=2cm{
  \pi_i(Diff(\tilde{X})) \ar[r]^{} \ar[d]_{id} & \pi_i(S_{\kappa-\lambda(e_1+...+e_n)}) \ar[d]^{(\alpha_{\mu,\mu,...,\mu})_*} \\
 \pi_i(Diff(\tilde{X})) \ar[r]_{} & \pi_i(S_{\kappa-(\lambda-\mu)(e_1+...+e_n)})
}
\]
    \caption{Action on homotopy groups of the space of symplectic structures induced by negative inflation(\eqref{diagram: negative inflation})}
\end{figure}

In the diagram $\eqref{diagram: negative inflation}$, the horizontal maps are defined as an evaluation map in Kronheimer's fibration.
\end{thm}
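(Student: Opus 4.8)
The plan is to follow McDuff's original construction of negative inflation (the ``Lalonde–McDuff inflation'' technique) adapted to the multi-point blow-up setting, and then verify the naturality diagram by chasing through how the inflation is defined at the level of families. First I would recall the single-point statement: given $(\tilde X,\tilde\omega)$ with $[\tilde\omega]=\kappa-\lambda e$ and an embedded exceptional sphere $E$ of size $\lambda$, one can perform an inflation along $E$ that produces a symplectic form in class $\kappa-(\lambda-\mu)e$ for $0<\mu\le\lambda$; McDuff shows this can be done continuously in families, so it descends to a well-defined map $\alpha_\mu\colon S_{\kappa-\lambda e}\to S_{\kappa-(\lambda-\mu)e}$ up to homotopy. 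Concretely, one fixes a family of $\tilde\omega$-compatible almost complex structures $J$ (using the contractibility of the space of such structures), finds for each the $J$-holomorphic exceptional sphere $C_E$ in class $e$, and inflates using a closed $2$-form Poincaré-dual to $C_E$ supported in a tubular neighborhood; the key technical input is positivity of $C_E\cdot C_E=-1$ being compensated correctly so that the new form $\tilde\omega + (\lambda-\mu)\eta_E$ stays nondegenerate (this uses McDuff's refinement allowing inflation along negative spheres when the size is decreased). For the $n$-point case one simply performs this simultaneously along the $n$ disjoint exceptional spheres $E_1,\dots,E_n$, with independent parameters $\mu_1,\dots,\mu_n$, which is possible since the $J$-holomorphic representatives $C_{E_i}$ are pairwise disjoint for generic compatible $J$ and the supporting forms $\eta_{E_i}$ can be chosen with disjoint supports.

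The algebraic properties then follow formally. That $\alpha_0$ is the identity is immediate since no form is added. The composition law $\alpha_{\mu+\mu'}\simeq\alpha_\mu\circ\alpha_{\mu'}$ comes from the additivity $\tilde\omega + \mu'\eta + \mu\eta' \simeq \tilde\omega + (\mu+\mu')\eta$ after noting $\eta$ and $\eta'$ are cohomologous (both Poincaré dual to the same exceptional class) and supported in nested neighborhoods, so a Moser argument identifies the two resulting forms; one must be slightly careful that all three maps are simultaneously defined, i.e. that $\lambda-\mu-\mu'>0$. Commutativity of $\alpha_{\mu_1,\dots,\mu_n}$ with $\alpha_{\mu_1',\dots,\mu_n'}$ follows the same way, using disjointness of supports along the different $E_i$'s so the added forms genuinely commute as perturbations. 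For the naturality square \eqref{diagram: negative inflation}: the evaluation maps $\pi_i(\Diff(\tilde X))\to\pi_i(S_{[\tilde\omega]})$ send a loop (or sphere) of diffeomorphisms $\phi_t$ to the family $\phi_t^*\tilde\omega$; applying $\alpha_{\mu,\dots,\mu}$ to this family means inflating each $\phi_t^*\tilde\omega$ along the exceptional spheres. The point is that $\phi_t$ acts trivially on $H_2$ (it lies in $\Diff$, and on a blow-up of a manifold with the relevant cohomology the exceptional classes are preserved up to the action, which for $\Diff_0$ is trivial — here one uses that $p_*$ in Kronheimer's fibration factors through the homotopy class of the family), so $\phi_t$ carries an inflation of $\tilde\omega$ to an inflation of $\phi_t^*\tilde\omega$ in the same class; hence the inflated family is $\phi_t^*(\text{inflated }\tilde\omega)$, which is exactly the image under the bottom evaluation map. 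Commutativity of the square up to homotopy then reduces to the uniqueness-up-to-homotopy of $\alpha_\mu$ already established.

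The main obstacle I expect is the family version of the inflation construction carried out along \emph{negative} self-intersection spheres while \emph{decreasing} sizes: ordinary inflation adds area and needs a positive-area holomorphic curve, whereas here one needs McDuff's more delicate argument (``negative inflation'') showing that subtracting a multiple of the Poincaré dual of an exceptional sphere keeps the form symplectic as long as one stays inside the symplectic cone, and that this can be done continuously over a compact parameter space. Making this simultaneous over $n$ disjoint exceptional divisors, and checking that the supporting closed forms can be chosen to depend continuously on the family of compatible almost complex structures (so that the resulting map on $S$-spaces is well-defined up to homotopy and independent of choices), is the technical heart; everything else is a Moser-type homotopy argument or a formal diagram chase. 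I would organize the write-up by first citing \cite{Mcduff2000} for the single-divisor family statement, then give the disjoint-support construction for the $n$-divisor case, and finally verify the three algebraic properties and the naturality square in turn.
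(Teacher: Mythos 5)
Your plan and the paper's proof rely on the same analytic input (inflation along negative self-intersection $J$-holomorphic spheres, i.e.\ Buse's theorem \cite{Buse2011}), but they are organized quite differently, and the difference matters. The paper does \emph{not} construct $\alpha_\mu$ by modifying symplectic forms at all: it replaces each space $S_{\kappa-\lambda_1e_1-\dots-\lambda_ne_n}$ by the homotopy-equivalent space $\mathcal{J}_{\kappa,\lambda_1,\dots,\lambda_n}$ of almost complex structures \emph{tamed} by some form in that class (McDuff \cite{Mcduff2000}, Smirnov \cite{Smirnov2023}), and then observes that Buse's inflation shows any $J$ tamed by a form in class $\kappa-\lambda_1e_1-\dots-\lambda_ne_n$ is also tamed by a form in class $\kappa-(\lambda_1-\mu_1)e_1-\dots-(\lambda_n-\mu_n)e_n$; hence on the $\mathcal{J}$-spaces $\alpha_\mu$ is literally an inclusion of subspaces. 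With that, $\alpha_0=\mathrm{id}$, additivity $\alpha_{\mu+\mu'}\simeq\alpha_\mu\circ\alpha_{\mu'}$, commutativity, and the naturality square with the evaluation maps of Kronheimer's fibration all become essentially formal (the diffeomorphism action is by pullback on both $S$ and $\mathcal{J}$ and is compatible with inclusions), with the remaining details as in Theorem 3 of \cite{Smirnov2023}.

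Your direct construction on forms has a genuine gap exactly at the point you flag as ``the technical heart'': to get a well-defined map $S\to S$ up to homotopy you must make all the choices (the compatible $J$, the embedded $J$-holomorphic representatives of $e_1,\dots,e_n$, the supported Poincar\'e-dual $2$-forms, the Moser isotopies) continuously and coherently over the whole infinite-dimensional space $S_{\kappa-\lambda_1e_1-\dots-\lambda_ne_n}$, not just over a single form or a generic family. In particular, for an arbitrary compatible $J$ the classes $e_i$ need not be represented by embedded, pairwise disjoint $J$-spheres (they can degenerate to cusp or reducible configurations; disjointness is only a generic statement), so the recipe ``inflate along $C_{E_i}$'' is not even pointwise defined on all of $S$, and no argument is given for independence of choices beyond a Moser sketch. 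This is precisely the difficulty the paper's reduction to tamed $\mathcal{J}$-spaces is designed to circumvent: the inclusion requires no choices at all. Two smaller points: the taming (not compatibility) condition is what Buse's theorem needs and what makes the inclusion statement work; and your coefficient should be $\tilde\omega+\mu\,\eta_{E}$ rather than $\tilde\omega+(\lambda-\mu)\eta_E$ (one \emph{adds} a multiple of the Poincar\'e dual of the exceptional sphere, and the negativity of $E\cdot E$ is what decreases its area), so the later description of ``subtracting a multiple of the Poincar\'e dual'' should be corrected for consistency.
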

\begin{proof}
Let $\mathcal{A}_{\kappa,\lambda_1,...,\lambda_n}$ be the space of almost-complex structures tamed by some form in $S_{\lambda-\mu_1e_1-...-\mu_ne_n}$. By\cite{Mcduff2000},\cite{Smirnov2023}, there is a natural homotopy equivalence between $\mathcal{A}_{\kappa,\lambda_1,...,\lambda_n}$ and $S_{\lambda-\mu_1e_1-...-\mu_ne_n}$. So we just need to show that all the properties stated in the lemma hold for maps between the spaces $\mathcal{A}_{\kappa,\lambda_1,...,\lambda_n}$.\\
By a theorem of Buse\cite{Buse2011}, for a symplectic $4$-manifold $(X,\omega_0)$ and any $\omega_0-$tamed almost-complex structure $J$. If $X$ admits an embedded $J-$holomorphic curve $C\subseteq X$ of self-intersection $(-m)<0$; then, for each $\mu$ such that
$$0\leq \mu < \frac{1}{m}\int_C\omega_0$$
there are symplectic forms $\omega_\mu$ taming J, which satisfy
$$[\omega_\mu] = [\omega_0]+\mu[C]$$
where $[C]$ is Poincare dual to $C$.\\
So there is a natural inclusion map from $\mathcal{A}_{\kappa,\lambda_1,...,\lambda_n}$ to $\mathcal{A}_{\kappa-(\lambda_1-\mu_1),...,(\lambda_n-\mu_n)}$. Then we can get all the properties stated in the lemma. The proof is essentially the same as proof of $Theorem\ 3$ in \cite{Smirnov2023}.\\
\end{proof}

\section{Family Seiberg Witten invariant, Kronheimer and Smirnov's invariants}
\label{sec:3}
In this section, we'll briefly recall the definition and basic properties of some gauge-theoretic invariants, and prepare for our claculation work in $\autoref{sec:Section 4}$. In $\autoref{subsec:3.1}$, we briefly introduce the family Seiberg-Witten invariant, following \cite{LiLiu2001}. See also \cite{BaragliaKonno2020},\cite{Lin2022}. And for detail about Seiberg-Witten equation, see \cite{KronheimerMrowka2007}. In $\autoref{subsec:3.2}$, we introduce the case that Seiberg-Witten equations are defined on closed oriented symplectic 4-manifolds. See \cite{Kronheimer1998},\cite{Taubes1995},\cite{Taubes1996} for detail. Then in $\autoref{subsec:3.3}$, we introduce Kronheimer's homomorphism and Smirnov's invariant \cite{Kronheimer1998},\cite{Smirnov2022},\cite{Smirnov2023} and extend the latter invariant to higher dimensional cases. And in $\autoref{subsec:3.4}$, we focus on family Seiberg-Witten equations over families of closed Kahler surfaces, and prove a lemma about transversality of solution of family Seiberg-Witten equations over some blowup families. 
\subsection{Family Seiberg Witten equation}\label{subsec:3.1}
Let $X$ be a connected, closed, smooth, oriented $4-$dimensional manifold. Let $b_i$ be $i-$th betti number of $X$ and $b^+$ be the dimension of the positive subspace of $H^2(X;\mathbb{R})$.\\
Let $\pi:\mathcal{X}\rightarrow B$ be a smooth fiber bundle such that the fiber of it is diffeomorphic to $X$. Define $\T_{\mathcal{X}/B}$ to be the vertical tangent bundle of $\mathcal{X}$, i.e. a subbundle of $\T\mathcal{X}$, defined as $\ker(\pi_*)$. We choose a smooth family of metrics \{$g_b$\} on $\T_{\mathcal{X}/B}$, which defines a principal $\SO(4)$ bundle of frames, $\Fr \rightarrow X$.\\
A family $\spinc$ structure $\mathcal{L}$ is a lifting of the frame bundle $\Fr$.
We know $\spinc(4)$ is isomorphic to $(\SU(2)\times \SU(2)\times \U(1))/\mathbb{Z}_2$, so given a $\spinc$ structure $\mathcal{L}$, we have two natural principal $\U(2)$ bundles. As a result, we have two associated $\U(2)$ bundles $S^+, S^-$. The natural homomorphism $\T^*(\mathcal{X}/B)\otimes S^+ \rightarrow S^-$ defines the Clifford homomorphism $\rho$. And the adjoint of $\rho$ defines a canonical homomorphism 
$$\tau: \End(S^+)\rightarrow \Lambda^+\otimes\mathbb{C}$$
Here, $\Lambda^+$ is the fiber bundle over $B$ with fiber equal to self-dual $2$-forms along the fibers of $\mathcal{X}$.\\
Let $L$ be $\det(S^+)$. A family of $\U(1)-$connection and the family of Levi-Civita connection on $\T_{\mathcal{X}/B}$ together determine a family of smooth covariant derivatives on $S^+$, we denote it by $ \{A_b|b\in B\}$.\\
Now we are ready to define the configuration space
\begin{align}\label{defi:configuration space of Seiberg Witten equation}
& \mathcal{C} := \{ (\Phi, A) \textbf{ of family spinor and family connection over } B | \Phi_b \textbf{ is a section of } S^+_b, \nonumber \\
& A_b \textbf{ is covariant derivative for each } b \in B \}
\end{align}
And we define $\mathcal{C}^*$ to be the space consisting of $(\Phi,A)$ with $\Phi\neq 0$. We call $\mathcal{C}^*$ the irreducible part of configuration space.
By choosing a family perturbation of $g_b-$self dual $2-$forms $\eta_b$ we can define the family Seiberg-Witten equation on the configuration space:\\ 
\begin{equation}\label{eq: Seiberg Witten equation}
\left\{
\begin{aligned}
  \mathcal{D}_{A_b}\Phi_b & = 0, \\
  F^+_{A_b } & = \tau^{-1}(\Phi_b)+i\eta_b.
\end{aligned}
\right.
\end{equation}
Here, $\mathcal{D}$ is a Dirac operator on the bundle $S=S^+\oplus S^-$, defined as the composition of Clifford multiplication and covariant derivative $\nabla_A$.\\
Applying gauge group $\mathcal{G}:=\Map(X,S^1)$ acting on the solution set, we get the Seiberg-Witten moduli space.\\
A solution $[A_b, \phi_b]$ in the moduli space is called reducible if $\phi_b=0$, and a solution is irreducible if it's not reducible.\\
By \cite{Kronheimer1998},\cite{LiLiu2001}, the formal dimension of Seiberg-Witten moduli space as a real manifold is: 
\begin{equation}\label{equa: formal dimension of Seiberg Witten moduli space}
d(\mathfrak{s},B)=\langle \epsilon, \epsilon\rangle - \langle K,\epsilon \rangle+dim(B)
\end{equation}
where $\epsilon$ is defined as $c_1(S^+)$.\\
We can find parametrization space 
\begin{equation}\label{equa: parameter space of Seiberg Witten equation}
\Gamma:=\{(g,\eta)\in \Met(T_{\mathcal{X}/B}\times i\Lambda^+)|\eta_b\textbf{ is }g_b-\textbf{self dual for every } b\in B\}
\end{equation}
of Seiberg-Witten equation. Each element in $\mathcal{P}$ defines a family Seiberg-Witten equation on $B$.\\
Now we can define the moduli space of family Seiberg Witten equation solutions
\begin{equation}\label{equa: Moduli space of solutions of FSW}
\mathcal{M}^\mathfrak{s}_{g,\eta}:=\bigcup_{b\in B}\mathcal{M}^{\mathfrak{s}}_{g_b,\eta_b}:=\{\textbf{ Solutions of the family Seiberg-Witten equation}\}/\mathcal{G}
\end{equation}
Here, $\mathcal{G}$ is the fiberwise gauge group action, where $\mathcal{G}_b=\Map(\mathcal{X}_b,\U(1))$.\\
In the counting of Seiberg-Witten invariant, we need to avoid reducible solutions. So we begin to study the subspace of $\Gamma$ corresponding to reducible solutions of Seiberg-Witten equations. As in \cite{LiLiu2001}, given a family $\spinc$ structure $\mathcal{L}$, we define the $\mathcal{L}-$wall as
\begin{equation}\label{equa:wall of SW parameter space}
\WA(\mathcal{L}):=\{(g,\eta)\in \Gamma|2\pi c_1(\mathcal{L})-\mu_b \text{ is } g_b \text{ anti-self dual}\}    
\end{equation}
We can see that $\forall (g,\mu) \in\Gamma\backslash \WA(\mathcal{L})$, there will be no reducible solution on the corresponding Seiberg-Witten moduli space $\mathcal{M}^{\mathfrak{s}}_{g,\eta}$.\\
By \cite{LiLiu2001}, the $\mathcal{L}-$wall is a submanifold of $\Gamma$, and its codimension is $max(b^+-dim B, 0)$. When $b^+\leq dimB+1$, $\Gamma\backslash \WA(\mathcal{L})$ still contains an nonempty open set. And we call each connected component of it an $\mathcal{L}-$chamber.
\begin{thm}(Li-Liu\cite{LiLiu2001})\label{thm: family SW invariant existence}
Let $M$ be a closed oriented $4$-manifold and $B$ a closed oriented
manifold. Let $\mathcal{X}$ be a ﬁber bundle with ﬁbre $M$ and base $B$, $\mathcal{L}$ be a $\spinc$ structure on $\mathcal{X}/B$ and $\Theta$ be a cohomology class in $H^*(\mathcal{C}/\mathcal{G};\mathbb{Z})$. Fix a homology orientation of the determinant line bundle 
$$\determinant^+ := \determinant(H^0(M; \mathbb{R})\otimes H^1(M;\mathbb{R})\otimes H^+(M;\mathbb{R})) \otimes H^{dimB}(B; \mathbb{R})$$
If $0 < b^+ \leq dimB + 1$, let $\Gamma_c$ be a chamber in $\Gamma$, then $SW(\mathcal{X}/B, \mathcal{L}, \Theta, c):=\langle \mathcal{M}^{\mathfrak{s}}_{g,\eta}, \Theta \rangle$ is well-defined for generic pairs in $\Gamma_c$ and is independent of the choice of the pairs. If $b^+ > dimB+1$, then $SW(\mathcal{X}/B, \mathcal{L}, \Theta):=\langle \mathcal{M}^{\mathfrak{s}}_{g,\eta}, \Theta \rangle$ is well defined for generic pairs in $\Gamma$.
\end{thm}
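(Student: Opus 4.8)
The plan is to recall the parametrized Seiberg--Witten construction in the standard Donaldson-type scheme, adapted to the compact base $B$. After completing $\mathcal{C}$ in suitable Sobolev norms, the based gauge group $\mathcal{G}$ acts freely and properly on the irreducible part $\mathcal{C}^*$, so the irreducible quotient $\mathcal{C}^*/\mathcal{G}$ is a Banach manifold and the reducible configurations lie in $(\mathcal{C}/\mathcal{G})\setminus(\mathcal{C}^*/\mathcal{G})$. The moduli space $\mathcal{M}^{\mathfrak{s}}_{g,\eta}$ of $\eqref{equa: Moduli space of solutions of FSW}$ is the zero locus, over all $b\in B$ simultaneously, of the perturbed map $(\Phi_b,A_b)\mapsto(\mathcal{D}_{A_b}\Phi_b,\ F^+_{A_b}-\tau^{-1}(\Phi_b)-i\eta_b)$ of $\eqref{eq: Seiberg Witten equation}$. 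The first ingredient is \emph{parametrized compactness}: the Weitzenb\"ock identity yields a $C^0$ bound on $\Phi_b$ in terms of the scalar curvature of $g_b$ and of $\eta_b$, which are uniformly bounded since $B$ is compact; fiberwise elliptic bootstrapping then shows $\mathcal{M}^{\mathfrak{s}}_{g,\eta}$ is compact in the $C^\infty$ topology, the family version of the usual compactness theorem.

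Next I would establish transversality by a Sard--Smale argument on the universal moduli space over $\Gamma$ (resp. over a chamber $\Gamma_c$): at an irreducible solution, the linearization of the equation together with variations of the pair $(g,\eta)$ is surjective, so for generic $(g,\eta)$ the irreducible part of $\mathcal{M}^{\mathfrak{s}}_{g,\eta}$ is a smooth closed manifold of the expected dimension $d(\mathfrak{s},B)$ from $\eqref{equa: formal dimension of Seiberg Witten moduli space}$. The crucial point is to choose the generic parameter off the wall $\WA(\mathcal{L})$ of $\eqref{equa:wall of SW parameter space}$, which has codimension $\max(b^+-\dim B,0)$: when $0<b^+\le\dim B+1$ the complement of $\WA(\mathcal{L})$ inside a chamber is open and dense, so there remain regular parameters with no reducible solutions and $\mathcal{M}^{\mathfrak{s}}_{g,\eta}$ is entirely irreducible; when $b^+>\dim B+1$ the wall has codimension $\ge 2$, hence $\Gamma\setminus\WA(\mathcal{L})$ is connected and a single such choice works without reference to a chamber.

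To orient $\mathcal{M}^{\mathfrak{s}}_{g,\eta}$, observe that the determinant line bundle over $\mathcal{C}^*/\mathcal{G}$ of the family of Fredholm operators obtained by coupling the vertical $F^+$-operator to the Dirac operator $\mathcal{D}$ and to the $\dim B$ base directions is trivializable; a choice of homology orientation of $\determinant^+$ fixes a trivialization, hence an orientation of the moduli space. Then $\SW(\mathcal{X}/B,\mathcal{L},\Theta,c)$ is defined by restricting $\Theta\in H^*(\mathcal{C}/\mathcal{G};\mathbb{Z})$, built as usual from the $\mu$-classes on $\mathcal{C}^*/\mathcal{G}$, to the compact oriented moduli space and pairing against its fundamental class. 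Independence of the generic pair within a chamber follows from the standard cobordism argument: a generic path of parameters inside $\Gamma_c$ (or inside $\Gamma\setminus\WA(\mathcal{L})$ when $b^+>\dim B+1$) produces a smooth compact oriented cobordism between the moduli spaces at its endpoints meeting no reducible, and restricting $\Theta$ to it shows the pairing is unchanged; crossing a wall is avoided by construction, which is why the invariant may depend on the chamber $c$ in the range $0<b^+\le\dim B+1$ and is chamber-independent otherwise.

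The main obstacle I expect is the simultaneous control of transversality and of reducibles in the family: one needs enough freedom in the perturbation $(g,\eta)$ to make the linearized operator surjective at every irreducible solution while keeping the whole construction, and the connecting paths, inside a single chamber so that $\WA(\mathcal{L})$ is never met, and the compactness estimates must be made uniform over the compact base $B$. The second delicate point is the orientation step, namely trivializing the determinant line bundle of the family linearized operator over $\mathcal{C}^*/\mathcal{G}$ compatibly with the chosen homology orientation of $\determinant^+$, which — as in the unparametrized case — requires care but no essentially new idea.
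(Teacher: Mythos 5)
The paper does not prove this theorem itself; it is quoted verbatim as a result of Li--Liu \cite{LiLiu2001}, so there is no in-paper argument to compare against. Your sketch correctly reproduces the standard family Seiberg--Witten construction from that reference --- parametrized compactness via the Weitzenb\"ock bound, Sard--Smale transversality over $\Gamma$, the wall $\WA(\mathcal{L})$ of codimension $\max(b^+-\dimension B,0)$ giving chamber dependence exactly when $0<b^+\leq \dimension B+1$ and chamber independence when the codimension is at least $2$, orientation via the determinant line, and the cobordism argument for invariance --- and is essentially the same approach as the cited proof.
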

As a result, it will be interesting to study the homotopy type of $\Gamma\backslash \WA(\mathcal{L})$. In \cite{LiLiu2001}, the period bundle over $B$ is defiend as:
$$\mathcal{P}:=\{(W_b,l_b)\in G_{b^+}^+(H^2(X_b;\mathbb{Z}))\times H^2(X_b,\mathbb{Z})|l \text{ is not perpendicular to } W\}$$
Here, $G^+_p$ means the Grassmannian space of positive $b^+-$plane in $H^2(X_b;\mathbb{R})$. By \cite{LiLiu2001}, the period bundle is homotopy equivalent to $\Gamma\backslash \WA(\mathcal{L})$.\\
We can see that the period bundle $\mathcal{P}$ is homotopy equivalent to an $S^{b^+-1}$ bundle over $B$ through fiberwise homotopy.
\begin{ex}\label{ex:T4 period bundle}
Let $X$ be complex torus $T^4$ or its blow-ups $T^4\#n\overline{\mathbb{CP}^2}$, $b^+(X)=3$ and we calculate the family Seiberg-Witten equation of $\{X_b|b\in S^2\}$ where $X_b$ is diffeomorphic to $X$. In this case, the period bundle is homotopy equivalent to $S^2\times S^2$, and the $\mathcal{L}-$chambers are graded by $\pi_2(S^2)\cong \mathbb{Z}$. In this special case, we call them winding number.
\end{ex}
With chambers in the moduli space understood, we can begin to define the ``pure'' Family Seiberg-Witten invariant $FSW_{\mathfrak{s}}$ of a smooth family $\mathcal{X}$.\\
Since the gauge group $\mathcal{G}$ also acts on the determinant line bundle $L$, $(\mathcal{C}^*\times L)/\mathcal{G}$ defines a line bundle over $\mathcal{X}\times (\mathcal{C}^*/\mathcal{G})$. We define $u$ to be the first Chern class of the line bundle and define $H$ to be its restriction on $\mathcal{C}^*/\mathcal{G}$.
\begin{defi}\label{defi: pure FSW}
Family Seiberg-Witten invariant $FSW_{\mathfrak{s}}(FSW_{\mathfrak{s},\Gamma_c})$ is defined to be     
$$\int_{\mathcal{M}^{\mathfrak{s}}_{g_b,\eta_b}}H^{d(\mathfrak{s},B)/2}$$
when $d(\mathfrak{s},B)$ is even and every $(g_b,\eta_b)$ is generic($\{(g_b,\eta_b)\}$ is generic in the given chamber $\Gamma_c$).
\end{defi}
\begin{rmk}\label{rmk: FSW chamber choice}
For a smooth family $X\rightarrow \mathcal{X} \rightarrow B$ s.t. $dimB +1 \geq b^+_2(X) >1$, Baraglia-Konno\cite{BaragliaKonno2020} and Lin\cite{Lin2022} define a cononical choice of chambers, thus we can define a ``family Seiberg-Witten invariant'' depending only on the family $\spinc$ structures.
\end{rmk}
\begin{rmk}
In our case, we take $\spinc$ structure $\mathfrak{s}_\epsilon$ such that the formal dimension $d(\mathfrak{s},B)=\frac{1}{4}(c_1^2(\mathfrak{s_b})-2\sigma-3\mathcal{X})+dimB=0$. In the case that all solutions are irreducible, discrete and regular, we can calculate the Family Seiberg-Witten invariant by ``solution counting'', i.e. the Family Seiberg Witten invariant $FSW_B(\mathfrak{s}_\epsilon)$ is defined by to $\underset{u\in\mathcal{M}^\mathfrak{s}_{g_b,\eta_b}}{\Sigma}(\pm 1)$. Here, the signs $\pm 1$ in the counting depend on signs of the isolated solutions. For simplicity of calculation, starting from here, we'll use $\textbf{mod 2}$ Family Seiberg-Witten invariants. 
\end{rmk}
There are some important property of family Seiberg-Witten invariants.
\begin{lem}(\cite{LiLiu2001},\cite{Smirnov2023})\label{lem: conjugate symmetry}
Given a family \{$(g_b,\eta_b)|b\in B$\} over a closed base $B$, and two conjugate $\spinc$ structures $\mathfrak{s}$, $-\mathfrak{s}$ i.e. $c_1(\mathfrak{s})=-c_1(\mathfrak{-s})$, if the family \{$(g_b,\eta_b)|b\in B$\} has vanishing winding number with respect to both $\mathfrak{s}$ and $-\mathfrak{s}$, then
$$\FSW(g_b,\eta_b)(\mathfrak{s}) = \FSW(g_b,\eta_b)(-\mathfrak{s}).$$
\end{lem}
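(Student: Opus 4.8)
The plan is to deduce the identity from the charge conjugation symmetry of the Seiberg--Witten equations, combined with the chamber structure of $\Gamma\setminus\WA(\mathcal{L})$ discussed above; throughout I would work with mod $2$ coefficients, and in the case $d(\mathfrak{s},B)=0$ that is relevant here, so that $FSW$ is simply the mod $2$ count of a generically finite moduli space and no orientation data enters.

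First I would set up the conjugation map at the level of configurations. Complex conjugation $(\Phi_b,A_b)\mapsto(\overline{\Phi_b},\overline{A_b})$ carries $S^{\pm}_{\mathfrak{s}}$ to $S^{\pm}_{-\mathfrak{s}}$, intertwines the family Dirac operators, and satisfies $F^{+}_{\overline{A_b}}=-F^{+}_{A_b}$ and $\tau^{-1}(\overline{\Phi_b})=-\tau^{-1}(\Phi_b)$. Hence a pair solves $\eqref{eq: Seiberg Witten equation}$ for $\mathfrak{s}$ with perturbation $\eta_b$ exactly when its conjugate solves it for $-\mathfrak{s}$ with perturbation $-\eta_b$, and this gives a fibrewise diffeomorphism $\mathcal{M}^{\mathfrak{s}}_{g_b,\eta_b}\cong\mathcal{M}^{-\mathfrak{s}}_{g_b,-\eta_b}$ taking regular solutions to regular solutions (the linearizations are conjugate operators). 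Along the way I would record that $d(-\mathfrak{s},B)=d(\mathfrak{s},B)$, since $K$ is conjugation-invariant and $\epsilon\mapsto K-\epsilon$ under $\mathfrak{s}\mapsto-\mathfrak{s}$, and that the class $H$ transforms to its $-\mathfrak{s}$ counterpart; for $d=0$ this last point is vacuous. The output of this step is $FSW(g_b,\eta_b)(\mathfrak{s})=FSW(g_b,-\eta_b)(-\mathfrak{s})$ in $\mathbb{Z}_2$.

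Next I would undo the sign change of the perturbation, i.e.\ show $FSW(g_b,-\eta_b)(-\mathfrak{s})=FSW(g_b,\eta_b)(-\mathfrak{s})$. By Theorem $\autoref{thm: family SW invariant existence}$ it suffices to check that $(g_b,-\eta_b)$ and $(g_b,\eta_b)$ lie in the same $(-\mathfrak{s})$-chamber, i.e.\ have equal winding number with respect to $-\mathfrak{s}$. Off $\WA(\mathcal{L})$ the winding number of $(g_b,\eta_b)$ relative to $\mathcal{L}$ is the homotopy class of the nowhere-zero section $b\mapsto P^{+}_{g_b}\bigl(2\pi c_1(\mathcal{L})-\mu_b\bigr)$ of the fibrewise self-dual bundle, viewed as a map of $B$ into the associated $S^{b^{+}-1}$-bundle. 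Replacing $(\mathcal{L},\eta_b)$ by $(-\mathcal{L},-\eta_b)$ sends this argument to its negative, hence post-composes the section with the fibrewise antipodal map (degree $(-1)^{b^{+}}$ on fibres), so the winding number of $(g_b,-\eta_b)$ w.r.t.\ $-\mathfrak{s}$ equals $\pm$ the winding number of $(g_b,\eta_b)$ w.r.t.\ $\mathfrak{s}$. The hypothesis that the latter vanishes therefore makes the former vanish; together with the separate hypothesis that the winding number of $(g_b,\eta_b)$ w.r.t.\ $-\mathfrak{s}$ vanishes, this places both parameters in the zero $(-\mathfrak{s})$-chamber. Combining the two steps yields $FSW(g_b,\eta_b)(\mathfrak{s})=FSW(g_b,-\eta_b)(-\mathfrak{s})=FSW(g_b,\eta_b)(-\mathfrak{s})$.

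The main obstacle I anticipate is the bookkeeping in the second step: matching the winding number of $(g_b,\eta_b)$ relative to $\mathfrak{s}$ with that of $(g_b,-\eta_b)$ relative to $-\mathfrak{s}$ through the fibrewise antipodal map, and invoking from Li--Liu's construction that ``equal winding number'' is the same as ``same chamber'' (transparent in the case $b^{+}=3$, $B=S^{2}$ of Example $\autoref{ex:T4 period bundle}$, where chambers are graded by $\pi_{2}(S^{2})\cong\mathbb{Z}$, and automatic when the wall has codimension at least two). A secondary point---only relevant if one wanted the statement with $\mathbb{Z}$-coefficients---is tracking the action of conjugation on the homology orientation of $\determinant^{+}$; since we use mod $2$ invariants throughout, this refinement is not needed.
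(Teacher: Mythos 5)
Your argument is correct and is essentially the one the paper relies on: the paper gives no proof of this lemma, citing Li--Liu and Smirnov, and their argument is exactly your two steps (charge conjugation identifying the moduli space for $(\mathfrak{s},\eta)$ with that for $(-\mathfrak{s},-\eta)$, then using the vanishing-winding-number hypotheses to place $(g_b,-\eta_b)$ and $(g_b,\eta_b)$ in the same $(-\mathfrak{s})$-chamber so the chamber-wise invariance of Theorem \ref{thm: family SW invariant existence} applies). No gaps beyond the bookkeeping you already flag, which is handled the same way in the cited sources.
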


\subsection{Symplectic $4-$manifold and Taubes-Seiberg-Witten invariant}\label{subsec:3.2}
Let $(X,\omega)$ be a closed connected oriented symplectic $4-$manifold. We take an almost Kahler $3-$tuple $(\omega,g,J)$, where $J$ is compatible to $\omega$ and $g$ is determined by $\omega$ and $J$. In this case, for a $\spinc$ structure $\mathfrak{s}_\epsilon$, spinor bundles are $S^+=L_\epsilon\otimes(\Lambda^{0,0}\oplus\Lambda^{0,2})$ and $S^-=L_\epsilon\otimes\Lambda^{0,1}$.\\
In this case, the Dirac operator is equal to $\sqrt{2}(\bar{\partial}+\bar{\partial}^*)$. And the determinant line bundle corresponding to the $\spinc$ structure is $K_X^*\otimes L_\epsilon^2$. 
For a connection $A+2B$ on the determinant line bundle, a spinor $(\alpha, \beta)$ on $S^+$, and a ``large" perturbation introduced by Taubes\cite{Taubes1995},\cite{Taubes1996}:
$\mu= F_{A_0}^+-i\rho\omega$,
and the Seiberg-Witten equation will be of the form:
\begin{equation}\label{equation: FSW equation in symplectic case}
\left\{
\begin{aligned}
\bar{\partial}\alpha + \bar{\partial}^*\beta & = 0 \\
2F^{0,2}_B & = \frac{\alpha^*\beta}{2} \\
2(F_B^+)^{1,1} & = \frac{i}{4}(|\alpha|^2 - |\beta|^2 - 4\rho)\omega
\end{aligned}
\right.
\end{equation}
In our case, when $\rho$ is large enough, there will be no reducible solutions over the family.\\
Note that when $0<b^+\leq dimB+1$, construction above defines a chamber in $\Gamma\backslash WA(\mathcal{L})$, we call it Taubes-Seiberg-Witten chamber. And the corresponding Family Seiberg-Witten invariant in this chamber is called Taubes-Seiberg-Witten invariant.\\
By results of \cite{Bradlow1990},\cite{Kronheimer1998}\cite{Taubes1995},\cite{Taubes1996}, When $d(\mathfrak{s},B)=0$, after applying suitable perturbations, solutions of Seiberg-Witten equation $\FSW_{\mathfrak{s}_\epsilon}$ is in one to one correspondence with the set of effective divisors in the class $\epsilon$. As a result, if $\epsilon$ is not in $H^{1,1}$, then $\FSW_B(\mathfrak{s}_\epsilon)$ must be $0$.

\subsection{Kronheimer's invariant and Smirnov's $q-$invariant}
\label{subsec:3.3}
Let $(X,\omega)$ be a closed symplectic 4-manifold and let again $S_{[\omega]}$ be the space of symplectic forms deformation equivalent to the symplectic form $\omega$ in cohomology class $[\omega]$. In this section, we'll define a version of family Seiberg Witten invariant over a disk(or manifold with boundary) with certain boundary condition\cite{Kronheimer1998}.\\
For a smooth family $\mathcal{X}:=\{X_b|b\in \mathbb{D}^{2k}\}$ s.t. $X_b$ is diffeomorphic to $X$ with a family of symplectic forms \{$\omega_t\}\subseteq S_{[\omega]}$ defined on $\partial D^{2k} \cong S^{2k-1}$. Then we choose a smooth $S^{2k-1}$ family of almost complex structures $\{J_t|t\in S^{2k-1}\}$ compatible to $\omega_t$ fiberwisely and get a family of almost Kahler metrics $\{g_t|t\in S^{2k-1}\}$. \\
Let $\mathfrak{s}_\epsilon$ be a $\spinc$ structure over on the vertical tangent bundle $T(\mathcal{X}/\mathbb{D}^{2k})$ and let $A_{0_t}$ be the Chern connection on $K^*_X$ determined by $g_t$, then we define 
$$ \eta_t := -i F^+_{A_{0t}} - \rho \omega_t$$
We choose $\rho > 0$ large enough so that 
$$\int\langle \eta_t\rangle_{g_t}\wedge\omega+2\pi \langle c_1(\mathfrak{s}_\epsilon)\rangle_{g_t}\wedge\omega<0$$
And we choose a family $\{\eta_t|t\in\mathbb{D}^{2k}\}$ of fiberwise $g_b$-self-dual $2$ forms on $X$ that agree with $\eta_b$ on $\partial\mathbb{D}^{2k}$ such that
$$\int\langle \eta_b\rangle_{g_b}\wedge\omega+2\pi \langle c_1(\mathfrak{s}_\epsilon)\rangle_{g_b}\wedge\omega<0$$
$\forall b\in \mathbb{D}^{2k}$.\\
If an extension $(g_b,\eta_b)$ satisfies the inequality above, we call it an admissible extension\cite{Smirnov2023}. In other word, we take a section of the period bundle $\mathcal{P}$ in the Taubes chamber.\\
By Kronheimer\cite{Kronheimer1998},\cite{LiLiu2001}, when we have
$$d(\mathfrak{s}_\epsilon,\mathbb{D}^{2k})=0,\quad \langle \epsilon, [\omega]\rangle \leq 0$$
we can count the solutions in the moduli space $\mathcal{M}^{\mathfrak{s}_\epsilon}_{(g_b,\eta_b)}$. This is an invariant of the symplectic family $\{\omega_t|t\in S^{2k-1}\}$. Moreover, this defines a homomorphism from $\pi_{2k-1}S_{[\omega]}$ to $\mathbb{Z}_2$.
\begin{defi}\label{defi:Kronheimer invariant}
Kronheimer's invariant $Q_\epsilon$ is defined as 
$\# \{\text{points of } \mathcal{M}^{\mathfrak{s}_\epsilon}_{(g_b,\eta_b)}$\} $( mod 2)$.  
\end{defi}
Since the definition of $Q_\epsilon$ depends on the Taubes chamber, so it also depends on deformation classes of family of symplectic forms. \\
Now let $(M,\omega)$ be a symplectic manifold diffeomorphic to $\T^4$ and $[\omega]=\kappa$ for some non-resonant positive class in $H^2(T^4;\mathbb{Z})$ and let $(X,\tilde{\omega})$ be $n-$point blow-up of $(M,\omega)$. We assume that the sum of the area of exceptional divisors are small enough and each of them is equal to $\lambda$.\\
Recall the definition of $\Delta_{\T^4}$ and $\Delta_{k,\T^4} \eqref{equa: index sets}$, for simplicity, we write them as $\Delta$ and $\Delta_k$.
\begin{equation}\label{equa: definition of Delta}
\Delta=\{\delta\in H^2(\T^4;\mathbb{Z})|\delta^2 =0, \delta\text{ primitive}\}    
\end{equation}
\begin{equation}\label{equa: definition of Delta_k}
\Delta_k=\{\delta\in\Delta|(k-1)\lambda<\langle \delta, \kappa \rangle <k\lambda\}    
\end{equation}
Here, $k=1,...,n$. For any $\delta\in \Delta_k$ and a sequence $1 \leq i_1 <...<i_k \leq n$, the real formal dimension $d(\mathfrak{s}_{\delta-e_{i_1}-...-e_{i_k}},B)$ of $FSW_{\delta-e_{i_1}-...-e_{i_k}}$ is equal to
\begin{equation}\label{equa: formal dimension of FSW for elliptic curves}
\langle \delta-e_{i_1}-...-e_{i_k}, \delta-e_{i_1}-...-e_{i_k} \rangle -\langle \delta-e_{i_1}-...-e_{i_k}, e_1+...+e_n \rangle + dim B=-2k + dimB    
\end{equation} 
For a smoothly trivial family of symplectic manifolds $\{(X_b,\omega_b)|b\in \mathbb{S}^{2k-1}\}$ s.t. $X_b$ is diffeomorphic to $T^4\#n\overline{\mathbb{CP}^2}$, we can see the formal dimension is $0$ and 
$$\langle \delta-e_{i_1}-...-e_{i_k},\kappa- \lambda(e_1+...+e_n)\rangle < 0$$
so Kronheimer's invariant $Q_{\delta-e_{i_1}-...-e_{i_k}}$ over the family is well-defined.\\ 
Recall that in $\autoref{subsec:2.4}$, we define a map $$\alpha_{\mu_1,...,\mu_n}:S_{\kappa,\lambda_1,...,\lambda_n}\rightarrow S_{{\kappa,(\lambda_1-\mu_1),...,(\lambda_n-\mu_n)}}$$
Smirnov\cite{Smirnov2023} defines a homomorphism 
\begin{equation}\label{equa: definition of Q-}
Q^-_{\delta-e}:=Q_{2e-\delta}\circ(\alpha_\mu)_*
\end{equation}
for some positive $\mu\in(0,\lambda)$ s.t. $\langle 2e-\delta, \kappa-(\lambda-\mu)e\rangle <0$ in the case of one point blowup of $T^4$. In our case, let $I:=\{i_1,...,i_k\}$, we define 
\begin{equation}\label{equa: definition of Q- in higher dimension case}
Q^-_{\delta-e_{i_1}-...-e_{i_k}}:=Q_{2\sum_{t\in I}e_t+\sum_{t\in\{1,...,n\}\backslash I}e_t-\delta}\circ\alpha_{\mu,...,\mu}
\end{equation}
Here, $\mu\in(0,\lambda)$ is a suitable small number such that 
$$\langle 2\sum_{t\in I}e_t+\sum_{t\in\{1,...,n\}\backslash I}e_t-\delta, \kappa-(\lambda-\mu)(e_1+...+e_n) \rangle<0$$
In this case, it's not hard to see that $Q^-_{\delta-e_{i_1}-...-e_{i_k}}$ is a well-defined homomorphism from $\pi_{2k-1}(S_{\kappa-\lambda(e_1+...+e_n)})$ to $\mathbb{Z}_2$ for any $k\geq 1$, then we can extend Smirnov's definition to higher dimension.
\begin{defi}\label{defi: q invariant in higher dimension}
Suppose $(X,\tilde{\omega},\tilde{J})$ is $n-$point blow up of an almost Kahler manifold $(X_0,\omega,J)$ where $(X_0,J)$ is a complex tori, and $[\tilde{\omega}]=\pi^*\kappa-\lambda (e_1+...+e_n)$. Here, $\lambda$ is a a small positive number. $\forall\delta\in\Delta_\kappa$ and a sequence $I:=\{i_1,...,i_k\}\subseteq \{1,...,n\}$ where $i_1<...<i_k$, we have two homomorphisms, $Q_{\delta-\sum_{t\in I}(e_t)}$ and $Q^-_{\delta-\sum_{t\in I}(e_t)}$. Define
$$q_{\delta-\sum_{t\in I}(e_t)}:=Q_{\delta-\sum_{t\in I}(e_t)}-Q^-_{\delta-\sum_{t\in I}(e_t)}:\pi_{2k-1}(S_{[\tilde{\omega}]})\rightarrow\mathbb{Z}_2$$
\end{defi}
\begin{rmk}
In Smirnov's paper, $(\delta-e)+(2e-\delta)=e $ is the canonical class. In our case, the sum of $\delta-\sum_{t\in I}$ and $2\sum_{t\in I}e_t+\sum_{t\in\{1,...,n\}\backslash I}e_t-\delta$ are also equal to the canonical class. The choice of the $\spinc$ structure comes from conjugate symmetry$\eqref{lem: conjugate symmetry}$ of family Seiberg Witten invariant.
\end{rmk}
Now, we'll introduce some properties of $q-$invariant.
\begin{lem}\label{lem: reduction of q to monodromies}
If a spherical family of symplectic forms \{$\omega_t|t\in S^{2n-1}$\} is given by a $S^{2n-1}$ family of diffeomorphisms, i.e. there is a family of diffeomorphisms \{$f_t|t\in S^{2n-1}$\} s.t. $f_t^*\omega_t=\omega$, then $q_{\delta-e_{i_1}-...-e_{i_k}}([\{\omega_t\}])=0$ for any $\delta\in \Delta$ and any subsequence $1\leq i_1<...<i_k \leq n$. 
\end{lem}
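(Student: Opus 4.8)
The plan is to show that each of the two constituent homomorphisms $Q_{\delta-e_{i_1}-\dots-e_{i_k}}$ and $Q^-_{\delta-e_{i_1}-\dots-e_{i_k}}$ \emph{already vanishes} on a family of symplectic forms that is pulled back by a family of diffeomorphisms; then their difference $q_{\delta-e_{i_1}-\dots-e_{i_k}}$ vanishes as well. The point is that Kronheimer's invariant $Q_\epsilon$ is an invariant of the pair (smooth family $\mathcal{X}\to\mathbb{D}^{2k}$, admissible extension of the symplectic/almost-Kähler data on the boundary), and when the boundary family of symplectic forms is obtained by dragging a \emph{fixed} form $\omega$ around by diffeomorphisms $\{f_t\}_{t\in S^{2k-1}}$, the whole family is ``gauge-equivalent'' over $S^{2k-1}$ to the constant family $(X,\omega)$.

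First I would make precise the pullback construction: given $\{f_t\}_{t\in S^{2k-1}}$ with $f_t^*\omega_t=\omega$, the map $t\mapsto f_t$ gives a trivialization of the bundle $\mathcal{X}|_{S^{2k-1}}$ identifying it with $S^{2k-1}\times X$ in such a way that the vertical symplectic form becomes the constant form $\omega$. Under this identification, a compatible almost complex structure $J_t$ and almost-Kähler metric $g_t$ can be taken to be $f_t^*J$, $f_t^*g$ for a single fixed compatible pair $(J,g)$; likewise the Taubes perturbation data $\eta_t=-iF^+_{A_{0t}}-\rho\omega_t$ becomes $f_t^*\eta$ for the fixed $\eta=-iF^+_{A_0}-\rho\omega$. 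Hence, over the boundary, the family Seiberg--Witten setup (metric, $\spinc$ structure, perturbation) is the pullback by $\{f_t\}$ of the fixed data on $(X,\omega)$, and in particular the section of the period bundle lies in the Taubes chamber with winding number zero, as needed for $Q_\epsilon$ to be defined.

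Next I would use this to extend the family — and the solution count — over the disk. Since the boundary family is a pullback by diffeomorphisms, it extends to a \emph{smoothly trivial} family over $\mathbb{D}^{2k}$: concretely, $\{f_t\}$ viewed as a map $S^{2k-1}\to\Diff(X)$ need not be null-homotopic, but the \emph{symplectic} bundle it defines is, because after the identification above the boundary data is literally constant in $t$ in the symplectic category; so one can take $\mathcal{X}=\mathbb{D}^{2k}\times X$ and extend $(g_b,\eta_b)$ by the constant data (or a homotopy to it), which is automatically an admissible extension. Then $\mathcal{M}^{\mathfrak{s}_\epsilon}_{(g_b,\eta_b)}$ over $\mathbb{D}^{2k}$ is the product of $\mathbb{D}^{2k}$ with the (empty, by the dimension/area inequality $\langle\epsilon,[\omega]\rangle\le 0$) moduli space of the fixed almost-Kähler structure on $(X,\omega)$, so the count is $0 \bmod 2$; thus $Q_{\delta-e_{i_1}-\dots-e_{i_k}}([\{\omega_t\}])=0$. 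For $Q^-_{\delta-e_{i_1}-\dots-e_{i_k}}$, I would note that negative inflation $\alpha_{\mu,\dots,\mu}$ applied to a family pulled back by diffeomorphisms is again pulled back by (the same) diffeomorphisms — the inflation construction of Theorem~\ref{thm: negative inflation} is natural with respect to the $\Diff$-action, as reflected in the commuting square of Figure~\ref{diagram: negative inflation} — so the same argument gives $Q_{2\sum_{t\in I}e_t+\sum_{t\notin I}e_t-\delta}(\alpha_{\mu,\dots,\mu}[\{\omega_t\}])=0$. Subtracting, $q_{\delta-e_{i_1}-\dots-e_{i_k}}([\{\omega_t\}])=0$.

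The main obstacle I anticipate is the bookkeeping in the second step: verifying that a boundary family which is a diffeomorphism-pullback genuinely bounds a \emph{smoothly trivial} family to which an admissible extension of $(g_b,\eta_b)$ exists, rather than merely bounding \emph{some} family — one must be careful that the Taubes-chamber/winding-number condition is preserved throughout the extension, and that the moduli space over the disk stays empty (here the hypotheses $d(\mathfrak{s}_\epsilon,\mathbb{D}^{2k})=0$ and $\langle\epsilon,[\omega]\rangle\le 0$, together with Taubes's identification of solutions with effective divisors, do the work since $\epsilon=\delta-e_{i_1}-\dots-e_{i_k}$ cannot be effective on the constant blow-up when paired negatively with $\kappa-\lambda(e_1+\dots+e_n)$). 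Everything else is a direct consequence of the naturality of $Q_\epsilon$ and of negative inflation under the diffeomorphism action.
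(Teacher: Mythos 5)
There is a genuine gap, and it sits at the heart of your argument: you claim that each of $Q_{\delta-e_{i_1}-\dots-e_{i_k}}$ and $Q^-_{\delta-e_{i_1}-\dots-e_{i_k}}$ vanishes separately, by using the $f_t$ to identify the boundary family with the constant family $(X,\omega)$ and then extending $(g_b,\eta_b)$ by constant data over $\mathbb{D}^{2k}$. That re-identification is only an isomorphism of the family \emph{over the boundary sphere}; Kronheimer's invariant is an invariant of the product bundle over the disk together with the given boundary data, and to transport the constant extension back to the original trivialization you would need to extend $\{f_t\}$ over the disk, i.e.\ you would need $[f_t]=0$ in $\pi_{2k-1}(\Diff(X))$, which is not assumed. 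What the diffeomorphism hypothesis actually buys is that the disk family can be \emph{capped off}: gluing a second disk with the constant form along the $f_t$-identification produces a family over a closed base whose clutching data is $\{f_t\}$, and since the cap carries no solutions (Taubes chamber, $\langle \epsilon,[\omega]\rangle<0$), $Q_\epsilon([\{\omega_t\}])$ equals the family Seiberg--Witten invariant of that closed (generally nontrivial) bundle. This is in general nonzero --- family SW invariants of bundles clutched by diffeomorphisms are exactly the tool used to detect nontrivial families of diffeomorphisms --- so your conclusion ``the moduli space over the disk is a product with the empty fiberwise moduli space, hence the count is $0$'' does not follow. Indeed, if $Q$ alone vanished on diffeomorphism-induced families there would be no reason to introduce $Q^-$ and the difference $q$ at all.

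The paper's proof (following Smirnov's Lemma~6) goes through precisely this closing-up step and then uses the structure of $q$ as a difference: after transforming both $Q_{\delta-\sum_{t\in I}e_t}$ and $Q^-_{\delta-\sum_{t\in I}e_t}$ into family Seiberg--Witten invariants over a closed base with vanishing winding number (the latter after negative inflation, where your naturality observation for $\alpha_{\mu,\dots,\mu}$ is indeed the right ingredient), one invokes the conjugation symmetry of Lemma~\ref{lem: conjugate symmetry} together with Lemma~\ref{lem: pointwise Chern class equality}, noting that $\mathfrak{s}_{\delta-\sum_{t\in I}e_t}$ and $\mathfrak{s}_{2\sum_{t\in I}e_t+\sum_{t\notin I}e_t-\delta}$ are conjugate ($c_1$'s sum to the canonical class). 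This gives $Q=Q^-$ over the closed base, hence $q=0$; the two terms cancel rather than vanish individually. To repair your write-up you should replace the ``constant extension'' step by this clutching/capping argument and the conjugation-symmetry cancellation.
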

\begin{proof}
In this case, we can transform $q_{\delta-\sum_{t\in I}e_t}$ into a family of Seiberg-Witten invariants over a closed base. The proof is essentially the same as $Lemma\ 6$ in \cite{Smirnov2023}.
\end{proof}
This lemma shows that $q-$invariants can be reduced to a homomorphism $\bar{q}$ defined on $\Ker(\pi_{2k-2}(\Symp_s(M,\omega))\rightarrow \pi_{2k-2}(\Diff(M)))$.
\begin{lem}\label{lem: q is a holological invariant}
$q-$invariant is actually a homological invariant on the space of symplectic structures. 
\end{lem}
\begin{proof}
We suppose that for $\{(M_b,\omega_b)\}$, a symplectic bundle over $S^{2k+1}$, represents a $0-$homologous element $\alpha$ in $\pi_{2k+1}(S_{[\omega]})$. We can find a $(2k+2)-$ simplex $\mathcal{F}$ in $S_{[\omega]}$ s.t. $\partial\mathcal{F}=\alpha$. We can also get a fiberwise symplectic manifolds $\{M_x,\omega_x|x\in\mathcal{F}\}$ with $[\omega_x]=[\omega]$, we also call it $\mathcal{F}$.\\
For a fixed $\spinc$ structure $\mathfrak{s}_\epsilon$ on $M$, we consider the construction of Kronheimer's invariant, we can get a symplectic bundle $\{(M_y,\omega_y)|y\in\mathbb{D}^{2k+2}\}$, and we call it $\mathcal{F}^\prime$.\\
$\mathcal{F}$ and $\mathcal{F}^\prime$ together define family Seiberg-Witten equation over a $(2k+2)-$dimensional closed chain $\mathcal{F}_0$. And we extend the perturbation family over $\mathcal{F}^\prime$ to an admissible perturbation $\{(g_t,\eta_t)|t\in \mathcal{F}_0\}$ of the family Seiberg-Witten equation following the method by \cite{Kronheimer1998}\cite{Smirnov2022}.\\
By the construction of $\mathcal{F}^\prime$, we see that $\langle \epsilon, [\omega]\rangle$ is non-positive on $\mathcal{F}$, so $\FSW(\mathcal{F}_0,\mathfrak{s}_\epsilon)=Q(\mathcal{F}^\prime,\epsilon)$. For the similar reason, $\FSW(\mathcal{F}_0,\mathfrak{s}_{K-\epsilon})=Q^-(\mathcal{F}^\prime,\epsilon)$.\\ 
Now conjugation symmetry$\eqref{lem: conjugate symmetry}$ of FSW holds and we see Smirnov's $q-$invariant should be $0$.
\end{proof}
Now let $(M,\omega)$ be a symplectic $K3$ surface and $[\omega]=\kappa$ for some non-resonant positive class in $H^2(M;\mathbb{Z})$ and let $(X,\tilde{\omega})$ be $n-$point blow up of $(M,\omega)$. We assume the sizes of the exceptional curves are all equal to $\lambda$ and $n\lambda$ is small enough.\\
For $K3$ surface, we define 
\begin{equation}\label{equa: definition of Delta for K3 surface}
\Delta_{K3} :=\{\delta\in H^2(M;\mathbb{Z})|\delta^2 = -2, \delta \text{ primitive }, \langle \delta , \kappa \rangle >0\}
\end{equation}
And we similarly define 
\begin{equation}\label{equa:definition of Delta_k for K3 surface}
\Delta_k:=\{\delta\in\Delta|(k-1)\lambda < \langle \delta, \kappa \rangle < k\lambda\}
\end{equation}
for $k=1,...,n$. There is a difference between $K3 $ surface case and and torus case: over a base manifold $B$, the real formal dimension for $K3$ surface is 
$$d(\mathfrak{s}_{\delta-e_{i_1}-...-e_{i_k}})-2k-2+dimB$$
It will not be hard to see that higher dimensional $q-$invariants are also well defined for multiple-point blow ups of $K3$ surfaces. And the property in Lemma $\autoref{lem: reduction of q to monodromies}$ is also true for $q-$invariants for blowup of $K3$ surfaces.

\subsection{Family blow up and Seiberg Witten eqautions on Kahler surfaces}\label{subsec:3.4}
In this section, we'll prove some lemmas about transversality of solutions in a family Seiberg Witten equations over some blow up families. We'll introduce a blow up formula for isolated and regular solutions of $\FSW$ on Kahler families and some applications of it.
\begin{thm}\label{thm:regularity of FSW solutions on blow up family}
Let $\mathcal{M}:=\{(X_t,\omega_t)|t\in B \cong \mathbb{D}^{2n}\}$ be a family of closed Kahler surface s.t. the holomorphic automorphism group on each fiber is discrete, or equivalently, $H^0(X_t,\Theta_{X_t})=0$ for all $t$. Let $\FSW(B,\mathcal{L},\mathfrak{s}_{\alpha})$ be a family Seiberg Witten equation defined on $\mathcal{M}$ s.t. its unique solution is regular and represented by a curve $C\in\alpha$ in the central fiber. Here, $\mathcal{L}$ is the chamber determined by family of Kahler structures. Take family blow up along image of a fiberwise holomorphic map $f: B \times \mathbb{D}^2 \rightarrow \mathcal{M}$ which is transverse to $C$ in the central fiber, and get a Kahler family $\mathcal{M}^\prime$ over $B\times\mathbb{D}^2$. Then we get a unique and regular solution of $\FSW(B \times \mathbb{D}^2,\tilde{\mathcal{L}},\mathfrak{s}_{\alpha-e})$ represented by $\tilde{C}$. Here, $e$ is homology class of the exceptional divisor, $\tilde{C}$ is the proper transform of $C$ and $\tilde{\mathcal{L}}$ is the chamber determined by blowup of the family of Kahler structures.
\end{thm}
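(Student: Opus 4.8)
The plan is to reduce the statement to the deformation-theoretic regularity criterion recalled in the introduction (from \cite{FriedmanMorgan1995,Kronheimer1998,Smirnov2020}): an isolated curve $\Gamma$ over the central fibre of a Kähler family over a ball represents a regular solution of $\FSW(\mathfrak s_{[\Gamma]})$ exactly when the composite of the Kodaira--Spencer map with the restriction map $H^1(\text{fibre},\Theta)\to H^1(\Gamma,\mathcal N_\Gamma)$ is surjective. Write $B':=B\times\mathbb D^2$ and $\widetilde X_0:=\mathrm{Bl}_p X_0$ with $p:=f(0,0)\in C$. First I would dispose of two bookkeeping points. The formal dimension is preserved: from $(\alpha-e)^2=\alpha^2-1$, $\langle K_{\widetilde X_0},\alpha-e\rangle=\langle K_{X_0},\alpha\rangle+1$ and $\dim B'=\dim B+2$ one gets $d(\mathfrak s_{\alpha-e},B')=d(\mathfrak s_\alpha,B)=0$. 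And $\tilde C$ is isolated: since $\pi^{*}C=\tilde C+E$ one has $\mathcal N_{\tilde C}\cong\mathcal N_{C/X_0}(-p)$, so $h^0(\mathcal N_{\tilde C})\le h^0(\mathcal N_{C/X_0})=0$, the latter holding because $C$ is a regular, hence isolated, solution; likewise $h^0(\widetilde X_0,\Theta_{\widetilde X_0})=h^0(X_0,\mathfrak m_p\Theta_{X_0})=0$, so $\mathcal M'$ is again of the type allowed in the theorem and the criterion applies.

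For the regularity check the plan is a diagram chase. Using $h^0(\Theta_{X_0})=0$ and the standard identification $\Def_{\mathrm{Bl}_pX}\simeq\Def_{(X,p)}$ (cf.\ \cite{Manetti2022}), I would produce
$$0\to T_pX_0\to H^1(\widetilde X_0,\Theta_{\widetilde X_0})\xrightarrow{\ \pi_*\ }H^1(X_0,\Theta_{X_0})\to 0,$$
whose $T_pX_0$ summand consists of the deformations that only move the blown-up point, and, from $0\to\mathcal N_{C/X_0}(-p)\to\mathcal N_{C/X_0}\to\mathcal N_{C/X_0}|_p\to 0$ together with $h^0(\mathcal N_{C/X_0})=0$,
$$0\to N_{C,p}\to H^1(\tilde C,\mathcal N_{\tilde C})\xrightarrow{\ \beta\ }H^1(C,\mathcal N_{C/X_0})\to 0,\qquad N_{C,p}:=T_pX_0/T_pC.$$
The crucial claim is that these form a commutative ladder whose middle vertical arrow is the restriction map $r_{\tilde C}$ of the criterion, whose right arrow is the restriction map $r_C$ for $C$, and whose left arrow is the natural quotient $T_pX_0\twoheadrightarrow N_{C,p}$; this is a naturality-under-blow-down statement for the residue sequence $0\to\Theta(-\log\,\cdot)\to\Theta\to\mathcal N\to 0$, refining the diagram in \autoref{diagram: Kodaira Spencer}. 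Granting it, I would finish as follows. On $T_0B\subset T_{(0,0)}B'$ the family $\mathcal M'|_{B\times\{0\}}$ blows down to $\mathcal M$, so $\pi_*\circ\KS'|_{T_0B}=\KS_B$, whence $\beta\circ r_{\tilde C}\circ\KS'|_{T_0B}=r_C\circ\KS_B$, which is onto $H^1(C,\mathcal N_{C/X_0})$ by the regularity hypothesis on $C$. On $\partial_s\in T_{(0,0)}B'$ the class $\KS'(\partial_s)$ lies in $T_pX_0$ (the complex structure downstairs does not vary) and equals $\tfrac{\partial f}{\partial s}(0,0)$, so by the ladder $r_{\tilde C}(\KS'(\partial_s))$ is the image of this vector in $N_{C,p}$, which is nonzero precisely because $f$ is transverse to $C$ at $p$, i.e.\ $\tfrac{\partial f}{\partial s}(0,0)\notin T_pC$. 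Hence $r_{\tilde C}\circ\KS'$ maps $\mathbb C\,\partial_s$ onto $\ker\beta=N_{C,p}$ and $T_0B$ onto a complement, so $r_{\tilde C}\circ\KS'$ is onto and $\tilde C$ is regular.

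For uniqueness I would use the Taubes correspondence \cite{Taubes1996,Kronheimer1998}: a solution of $\FSW_{B'}(\mathfrak s_{\alpha-e})$ over a fibre $\widetilde X_{(b,s)}$ of $\mathcal M'$ is an effective divisor $D'$ of class $\alpha-e$; its blow-down $\pi_*D'$ is effective of class $\alpha$ on the fibre $X_b$ of $\mathcal M$, so the uniqueness hypothesis for $\mathcal M$ forces $b=0$ and $\pi_*D'=C$, hence $D'=\tilde C+mE$ with $m\ge0$ and $\tilde C$ the proper transform of $C$ through $f(0,s)$. Comparing classes: if $f(0,s)\notin C$ then $[D']=\alpha+me\ne\alpha-e$ for any $m\ge0$, so there is no solution over such a fibre, which (after shrinking $\mathbb D^2$ so that $f(\{0\}\times\mathbb D^2)$ meets $C$ only at $p$) removes every $s\ne0$; and when $f(0,s)=p$ we get $[D']=\alpha+(m-1)e$, forcing $m=0$ and $D'=\tilde C$. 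Thus $\tilde C$ is the unique solution, regular by the previous step.

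The hard part will be the commutative ladder, that is, proving that the obstruction/restriction maps of $\Def_{(X_0,C)}$ and $\Def_{(\widetilde X_0,\tilde C)}$ are intertwined by blow-down and that the induced map $T_pX_0\to N_{C,p}$ on the point-moving summands is the obvious quotient (geometrically: moving $p$ along $C$ carries the proper transform along, moving it transversally to $C$ obstructs it). This requires combining the $\pi_*$-acyclicity identifications $\pi_*\Theta_{\widetilde X_0}=\mathfrak m_p\Theta_{X_0}$ with the naturality of \autoref{diagram: Kodaira Spencer} for pairs. The remaining ingredients — the index count, the two exact sequences, and the use of the transversality of $f$ — are routine.
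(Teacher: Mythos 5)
Your proposal is correct and follows essentially the same route as the paper: reduction to the Kodaira--Spencer/restriction surjectivity criterion, the Taubes correspondence to localize solutions to the central fibre and single out $\tilde C$, the Burns--Wahl blow-up sequences to split off the point-moving directions, and transversality of $f$ to $C$ to hit the extra class $\ker\gamma$. The commutative ladder you flag as the remaining hard point is precisely the paper's diagram $\eqref{diagram:commutative diagram, holomorphic sheaves, blow up}$ (via Flenner--Zaidenberg) combined with Lemma \autoref{lem: Image of Kodiara Spencer map of a blow up family is isomorphic to ker(beta)}, so your explicit identifications $\ker\beta\cong T_pX_0$, $\ker\gamma\cong N_{C,p}$ and $\KS'(\partial_s)=\partial_s f(0,0)$ amount to a sharpened form of the paper's own argument.
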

By \cite{FriedmanMorgan1995},\cite{Kronheimer1998},\cite{Smirnov2020},\cite{Taubes1995},\cite{Taubes1996}, in the family Seiberg-Witten equation, $C$ represents a regular solution if and only if the composition of $\KS|_{T_0B}$ and the restriction homomorphism
$$r_*: H^1(X,\Theta_X) \rightarrow H^1(C, \mathcal{N}_{C|X})$$
is surjective. Here, $\Theta_X$ is the holomorphic tangent sheaf of $X$ and $\mathcal{N}_{C|X}$ is the normal bundle of $C$.\\
Now we'll begin to show that $r_*$ is surjective.\\
Recall the short exact sequence of sheaves of holomorphic sections on a general pair $(X,C)$ of complex surface $X$ and holomorphic curve $C$.
\begin{equation}\label{equa: short exact sequence of holomorphic sheaves}
0 \rightarrow \Theta_X(-log C)\rightarrow \Theta_X \rightarrow \mathcal{N}_{C|X} \rightarrow 0
\end{equation}
Here, $\Theta_X(-log C)$ is the sheaf of tangent vector fields on $X$ which is tangent to $C$. The long exact sequence corresponding to $\eqref{equa: short exact sequence of holomorphic sheaves}$ will be
\begin{align}\label{equa: long exact sequence, tangent sheaves, curves}
H^0(C,\mathcal{N}_{C|X}) &\rightarrow H^1(X,\Theta_X(-\log C))\rightarrow H^1(X,\Theta_X) \rightarrow H^1(C,\mathcal{N}_{C|X}) \nonumber \\
&\rightarrow H^2(X,\Theta_X(-\log C))\rightarrow H^2(X,\Theta_X) \rightarrow H^2(C,\mathcal{N}_{C|X}) \cong 0
\end{align}
Since $H^1(X,\Theta_X)$ is the tangent space of deformation functor $\Def_X$ and $H^1(X,\Theta_X(-log C)$ is the tangent space of deformation functor $Def_{(X,C)}$, Seiberg-Witten invariant on closed Kahler surface is highly related to deformation of complex structures of the Kahler surface.\\
By $\eqref{equa: long exact sequence, tangent sheaves, curves}$, $r_*$ is surjective if and only if $H^2(X,\Theta_X(-\log C))\cong H^2(X,\Theta_X)$, in other words, it is equivalent to the equation $h^2(X,\Theta_X(-\log C)) = h^2(X,\Theta_X)$. 
\begin{lem}\cite{FlennerZaidenberg1994},\cite{BurnsWahl1974}\label{lem: blow up, r* and cohomology of holomorphic sheaves}
Suppose $r_*$ is surjective for $(X,C)$. Let $\tilde{X}$ be the blow up of $X$ at a point $p\in C$, and let $\tilde{C}$ be the proper transform. Then\\
$1.$ $r_*$ is surjective for $(\tilde{X},\tilde{C})$.\\
$2.$ $h^1(\tilde{X},\Theta_{\tilde{X}})=h^1(X,\Theta_X)+2$ and $h^0(\tilde{X},\Theta_{\tilde{X}})=h^0(X,\Theta_X)$.\\
$3.$ $h^2(\tilde{X},\Theta_{\tilde{X}}(-log C))= h^2(X,\Theta_X(-log C))$
\end{lem}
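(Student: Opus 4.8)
The plan is to deduce all three parts from the behaviour of the cohomology of the (logarithmic) tangent sheaf under the blow-up $\pi\colon\tilde X\to X$ at $p$, using the reformulation already recorded: from the long exact sequence $\eqref{equa: long exact sequence, tangent sheaves, curves}$, and because $H^2(D,\mathcal N_{D|Y})=0$ for a curve $D$, the map $r_*$ is surjective for a pair $(Y,D)$ exactly when $h^2(Y,\Theta_Y(-\log D))=h^2(Y,\Theta_Y)$. So I would first establish the cohomological identities for $\Theta_{\tilde X}$ (part 2) and for $\Theta_{\tilde X}(-\log\tilde C)$ (part 3), and then obtain part 1 as a formal consequence. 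Throughout, $\tilde C$ denotes the proper transform and $E$ the exceptional $(-1)$-curve.

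For part 2, push forward the standard exact sequence $0\to\Theta_{\tilde X}\xrightarrow{d\pi}\pi^*\Theta_X\to\iota_*\mathcal O_E(1)\to 0$ (here $\iota\colon E\hookrightarrow\tilde X$, and $\iota_*\mathcal O_E(1)$ is the cokernel of $d\pi$). Since $\pi_*\mathcal O_{\tilde X}=\mathcal O_X$, $R^1\pi_*\mathcal O_{\tilde X}=0$, and $R^\bullet\pi_*\iota_*\mathcal O_E(1)$ is $\mathbb C^2$ supported at $p$ in degree $0$ and vanishes otherwise, the pushforward long exact sequence collapses to $0\to\pi_*\Theta_{\tilde X}\to\Theta_X\xrightarrow{\mathrm{ev}_p}\mathbb C_p^{2}\to R^1\pi_*\Theta_{\tilde X}\to 0$; as evaluation at $p$ is surjective, $\pi_*\Theta_{\tilde X}=\Theta_X\otimes\mathcal I_p$ and $R^1\pi_*\Theta_{\tilde X}=0$ (the latter also follows from the theorem on formal functions via $\Theta_{\tilde X}|_E=\mathcal O_E(2)\oplus\mathcal O_E(-1)$). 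The Leray spectral sequence then degenerates, $H^i(\tilde X,\Theta_{\tilde X})\cong H^i(X,\Theta_X\otimes\mathcal I_p)$, and feeding the skyscraper sequence $0\to\Theta_X\otimes\mathcal I_p\to\Theta_X\to\Theta_X|_p\to 0$ into the long exact sequence gives $h^2(\tilde X,\Theta_{\tilde X})=h^2(X,\Theta_X)$ and, under the standing hypothesis $h^0(X,\Theta_X)=0$, also $h^0(\tilde X,\Theta_{\tilde X})=0$ and $h^1(\tilde X,\Theta_{\tilde X})=h^1(X,\Theta_X)+2$.

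Part 3 is parallel. In local coordinates $(u,v)$ near $\tilde C\cap E$ with $E=\{u=0\}$, $\tilde C=\{v=0\}$, $\pi(u,v)=(u,uv)$, one checks directly which vector fields tangent to $C$ near $p$ lift to holomorphic fields tangent to $\tilde C$; the outcome is $\pi_*\Theta_{\tilde X}(-\log\tilde C)=\ker\!\big(\Theta_X(-\log C)\to\Theta_C\to T_pC\big)$, a subsheaf of colength one in $\Theta_X(-\log C)$, together with $R^1\pi_*\Theta_{\tilde X}(-\log\tilde C)=0$. (The $R^1$-vanishing is cleanest from the comparison $0\to\Theta_{\tilde X}(-\log(\tilde C+E))\to\Theta_{\tilde X}(-\log\tilde C)\to\iota_*\mathcal O_E(-1)\to 0$, which reduces it to the normal-crossing sheaf $\Theta_{\tilde X}(-\log(\tilde C+E))$ treated exactly as in part 2; equivalently it follows from $\Theta_{\tilde X}(-\log\tilde C)|_E=\mathcal O_E(1)\oplus\mathcal O_E(-1)$.) As before Leray degenerates, and the skyscraper sequence $0\to\pi_*\Theta_{\tilde X}(-\log\tilde C)\to\Theta_X(-\log C)\to\mathbb C_p\to 0$ yields $h^2(\tilde X,\Theta_{\tilde X}(-\log\tilde C))=h^2(X,\Theta_X(-\log C))$, which is part 3. (Alternatively one argues via Serre duality, using $\Omega^1_{\tilde X}(\log(\tilde C+E))=\pi^*\Omega^1_X(\log C)$.)

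Part 1 then follows formally: $h^2(\tilde X,\Theta_{\tilde X}(-\log\tilde C))=h^2(X,\Theta_X(-\log C))=h^2(X,\Theta_X)$, the first equality by part 3 and the second because $r_*$ is surjective for $(X,C)$ by hypothesis; and $h^2(\tilde X,\Theta_{\tilde X})=h^2(X,\Theta_X)$ by the skyscraper step of part 2 (which uses no assumption on $h^0$). Hence $h^2(\tilde X,\Theta_{\tilde X}(-\log\tilde C))=h^2(\tilde X,\Theta_{\tilde X})$, i.e. $r_*$ is surjective for $(\tilde X,\tilde C)$. The main obstacle I anticipate is making the two pushforward computations rigorous — identifying the cokernel sheaves on $E$ precisely ($\mathcal O_E(1)$ without logs, $\mathcal O_E$ in the log case) and verifying $R^1\pi_*=0$ in the log case; these are exactly the local facts underlying the cited results \cite{BurnsWahl1974},\cite{FlennerZaidenberg1994}, which I would invoke for the local analysis while keeping the global bookkeeping (Leray degeneration, the skyscraper sequences, and the $r_*$-reformulation) explicit.
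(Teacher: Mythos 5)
Your proposal is correct and stays within the same circle of ideas as the paper (the Burns--Wahl and Flenner--Zaidenberg pushforward facts plus Leray), but the bookkeeping is genuinely different. The paper proves part 1 by a two-sided sandwich: it gets $h^2(\tilde{X},\Theta_{\tilde{X}}(-\log \tilde{C}))\geq h^2(\tilde{X},\Theta_{\tilde{X}})$ from the long exact sequence for $(\tilde{X},\tilde{C})$, quotes Flenner--Zaidenberg for $h^2(X,\Theta_X(-\log C))=h^2(\tilde{X},\Theta_{\tilde{X}}(-\log\tilde{C}))$ (your part 3), and then uses the Burns--Wahl sequence $0\to\pi_*\Theta_{\tilde{X}}\to\Theta_X\to N_p\to 0$ with $R^1\pi_*\Theta_{\tilde{X}}=0$ and Leray to get the reverse inequality $h^2(\tilde{X},\Theta_{\tilde{X}})\geq h^2(X,\Theta_X)$, whereas you compute both direct images explicitly ($\pi_*\Theta_{\tilde{X}}=\Theta_X\otimes\mathcal{I}_p$ and the colength-one subsheaf of $\Theta_X(-\log C)$ in the log case) and obtain the equalities $h^2(\tilde{X},\Theta_{\tilde{X}})=h^2(X,\Theta_X)$ and part 3 directly, from which part 1 is formal. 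Your local identifications (cokernel $\iota_*\mathcal{O}_E(1)$, the comparison sequence with quotient $\iota_*\mathcal{O}_E(-1)$, the evaluation-at-$p$ descriptions of the kernels) check out, so this is a sound, somewhat more self-contained route; what it costs is carrying out the local analysis that the paper simply cites.

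The one substantive divergence is part 2. You prove $h^0(\tilde{X},\Theta_{\tilde{X}})=h^0(X,\Theta_X)$ and $h^1(\tilde{X},\Theta_{\tilde{X}})=h^1(X,\Theta_X)+2$ only under the hypothesis $h^0(X,\Theta_X)=0$, which is not in the lemma's statement. This is actually the right call: without some such assumption the conclusion of part 2 fails (e.g.\ blowing up a point of an abelian surface drops $h^0$ from $2$ to $0$), and the paper's own unconditional argument for $c=0$ does not work as written --- the relation $\chi(\Theta_{\tilde{X}})=\chi(\Theta_X)-2$ combined with $h^2(\tilde{X},\Theta_{\tilde{X}})=h^2(X,\Theta_X)$ is satisfied for every value of $c$ in $\eqref{equa: first and secong cohomology of tangent sheaf and blow up}$, so it cannot force $c=0$; one needs $c=0$ as an input, which in the paper's applications (Theorem $\autoref{thm:regularity of FSW solutions on blow up family}$ and the inductive blow-ups of $T^4$ and $K3$, where automorphism groups are discrete) holds precisely because $H^0(X,\Theta_X)=0$. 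So your conditional statement is the defensible form of part 2, and the condition is automatic wherever the lemma is used; parts 1 and 3, as you note, need no assumption on $h^0$.
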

\begin{proof}
By observing $\eqref{equa: long exact sequence, tangent sheaves, curves}$ for the pair $(\tilde{X},\tilde{C})$, we have
$$H^2(\tilde{X},\Theta_{\tilde{X}}(-log \tilde{C}))\rightarrow H^2(\tilde{X},\Theta_{\tilde{X}}) \rightarrow 0$$
as a part of long exact sequence, so $h^2(\tilde{X},\Theta_{\tilde{X}}(-log \tilde{C})) \geq h^2(\tilde{X},\Theta_{\tilde{X}})$.
\begin{lem}\label{lem:blow up of long exact sequence of holomorphic sheaves}($\text{Flenner-Zaidenberg}$\cite{FlennerZaidenberg1994})\\
Let $(V,D)$ be a pair of smooth compact algebraic surface and a divisor $D$ with at most simple normal crossings. Let $\tilde{V}$ be blow up of $V$ at a point on $D$, and $\tilde{C}$ be $\pi^{-1}(D)_{red}$, then we have 
\begin{equation}\label{equa:blow up of cohomology of Theta_v(-log D)}
1.h^2(V,\Theta_V(-log D))=h^2(\tilde{V},\Theta_{\tilde{V}}(-log \tilde{D}))  
\end{equation}
\begin{equation}\label{equa: Euler number change of blow up of tangent sheaf}
    \chi(\Theta_{V^\prime}) = \chi(\Theta_V)-2
\end{equation}
\begin{equation}\label{equa: second cohomology of Theta_V(-log D) and blow up}
h^2(V^\prime,\Theta_{V^\prime}(-log \tilde{D}))= h^2(V,\Theta_V(-log D))
\end{equation}
\end{lem}
So we have $h^2(X,\Theta_X(-log C))=h^2(\tilde{X},\Theta_{\tilde{X}}(-log \tilde{C})) \geq h^2(\tilde{X},\Theta_{\tilde{X}})$.\\
Since $r_*$ is surjective for $(X,C)$, we have 
\begin{equation}\label{equa: second cohomology of Theta_X don't increase after blow up}
h^2(X,\Theta_X)=h^2(X,\Theta_X(-log C))=h^2(\tilde{X},\Theta_{\tilde{X}}(-log \tilde{C})) \geq h^2(\tilde{X},\Theta_{\tilde{X}})  
\end{equation}
By \cite{BurnsWahl1974}, we have a short exact sequence
\begin{equation}\label{equa: short exact sequence, blow up}
0 \rightarrow \pi_*\Theta_{\tilde{X}} \rightarrow \Theta_X \rightarrow N_p \rightarrow 0
\end{equation}
where $N_p$ is the normal bundle of $p$ in $X$. And we know $R^1\pi_*\Theta_{\tilde{X}}=0$. Using Leray's spectral sequence, since the differential $d^{2,0}:h^2(X, \pi_*\Theta_{\tilde{X}}) \rightarrow H^0(X, R^1\pi_*\Theta_{\tilde{X}})$ is trivial, $H^2(X, \pi_*\Theta_{\tilde{X}})$ converges to $H^2(\tilde{X}, \Theta_{\tilde{X}})$\cite{Vikal2024}. As a result, $h^2(\tilde{X},\Theta_{\tilde{X}}) \geq h^2(X, \pi_*\Theta_{\tilde{X}}) $.\\
Now consider the long exact sequence corresponding to $\eqref{equa: short exact sequence, blow up}$:
\begin{align}\label{equa: long exact sequence of blow up}
& H^0(X,\pi_*\Theta_{\tilde{X}}) \rightarrow H^0(X,\Theta_X) \rightarrow H^0(p, N_p) \cong \mathbb{C}^2 
\rightarrow H^1(X, \pi_*\Theta_{\tilde{X}}) \rightarrow H^1(X,\Theta_X) \rightarrow H^1(p,N_p) \cong 0 \nonumber \\
& \rightarrow H^2(X, \pi_*\Theta_{\tilde{X}}) \rightarrow H^2(X,\Theta_X) \rightarrow 0
\end{align}
We have 
\begin{equation}\label{equa: second cohomology of Theta_X don't decrease after blow up}
h^2(\tilde{X},\Theta_{\tilde{X}}) \geq h^2(X, \pi_*\Theta_{\tilde{X}}) \geq h^2(X, \Theta_X)
\end{equation}
Combining $\eqref{equa: second cohomology of Theta_X don't increase after blow up}$ and $\eqref{equa: second cohomology of Theta_X don't decrease after blow up}$, we get the first conclusion in Lemma$\autoref{lem: blow up, r* and cohomology of holomorphic sheaves}$.\\
Since $R^1\pi_*\Theta_{\tilde{X}}=0$, it will not be hard to see that $h^0(\tilde{X},\Theta_{\tilde{X}})=h^0(X,\pi_*\Theta_{\tilde{X}})$ and $h^1(\tilde{X},\Theta_{\tilde{X}})=h^1(X,\pi_*\Theta_{\tilde{X}})$, actually, we can identify corresponding cohomology groups.\\
By $\eqref{equa: long exact sequence of blow up}$, we have 
\begin{equation}\label{equa: first and secong cohomology of tangent sheaf and blow up}
h^0(X,\pi_*\Theta_{\tilde{X}}) + c = H^0(X,\Theta_X),\quad h^1(X, \pi_*\Theta_{\tilde{X}}) = (2-c) + h^1(X,\Theta_X)
\end{equation}
Here, $c$ is rank of the kernel of connecting homomorphism $\partial_0: H^0(p,N_p)\rightarrow H^1(X,\Theta_{\tilde{X}})$, $c=0,1,2$.\\
Since $\chi(\Theta_{\tilde{X}})=\chi(\Theta_X)-2$ and $h^2(\tilde{X},\Theta_{\tilde{X}})=h^2(X,\Theta_X)$, we have $2-2c=2$, so $c=0$. We get the second conclusion. The third conclusion is part of the Lemma$\autoref{lem:blow up of long exact sequence of holomorphic sheaves}$.
\end{proof}
Now we begin to show that the composition $r_*\circ\KS$ is surjective.\\
In our case, we deal with curves with negative self-intersection, so we have $C^2<0$. So $H^0(C,\mathcal{N}_{C|X})=0$ and $H^2(C,\mathcal{N}_{C|X}) = 0$. So the long exact sequence $\eqref{equa: long exact sequence, tangent sheaves, curves}$ reduces to a short exact sequence
\begin{equation}\label{equa: short exact sequence coming from the long exact sequence of tangent bundle and curves}
0 \rightarrow H^1(V,\Theta_V(-\log D))\rightarrow H^1(V,\Theta_V) \rightarrow H^1(D,\mathcal{N}_{D|X}) \rightarrow 0
\end{equation}
when $(V,D)=(X,C)$ or $(\tilde{X},\tilde{C})$.\\
The commutative diagram in Lemma$1.5$ of \cite{FlennerZaidenberg1994} will give us a commutative diagram $\eqref{diagram:commutative diagram, holomorphic sheaves, blow up}$ in cohomology 

\begin{figure}[h]\label{diagram:commutative diagram, holomorphic sheaves, blow up}
    \centering
   \[
\xymatrix@C=1.5cm@R=1cm{
H^1(\tilde{X},\Theta_{\tilde{X}}) \ar[d]^{\beta} \ar[r] & H^1(\tilde{C},\mathcal{N}_{\tilde{C}|\tilde{X}}) \ar[d]^{\gamma} \ar[r] & 0  \\
H^1(X,\Theta_X) \ar[r] & H^1(C,\mathcal{N}_{C|X}) \ar[r] & 0
}
\]
Diagram \eqref{diagram:commutative diagram, holomorphic sheaves, blow up}
\end{figure}

Here, we identify $H^i(\tilde{X},\Theta_{\tilde{X}})$ and $H^i(X,\pi_*(\Theta_{\tilde{X}}))$.\\
By the assumption of Theorem$\eqref{thm:regularity of FSW solutions on blow up family}$, to prove the surjectivity of $r_*\circ\KS$, we need to determine its restriction on $\ker(\beta)$.\\
By observing the diagram $\eqref{diagram:commutative diagram, holomorphic sheaves, blow up}$, it will not be hard to see that $\ker(\gamma)\cong \mathbb{C}$ is contained in the image of  $r_*(\ker(\beta))\subseteq H^1(C,\mathcal{N}_{C|X})$. 
\begin{lem}\label{lem: Image of Kodiara Spencer map of a blow up family is isomorphic to ker(beta)}
Let $X$ be a closed complex algebraic surface and $\Aut(X)$ be the automorphism group of $X$. Suppose $\Aut(X)$ is discrete. Let $p\in X$ be a point and $B$ is a neighborhood of $p$, isomorphic to $\mathbb{D}^4$. Let $M$ be the blow up of $X$ at $p$ and let $\mathcal{M}$ be the deformation family defined by blowing up each point in $B$. Then $\KS_p(\T_pB) \cong \ker(\beta)$.
\end{lem}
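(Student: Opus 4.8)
The plan is to determine the Kodaira--Spencer map $\KS_p\colon T_pB\to H^1(M,\Theta_M)$ exactly, by identifying it with the connecting homomorphism of the blow-down sequence, and then to match its image with $\ker(\beta)$ by a dimension count. First I would realise the family concretely: $\mathcal M\to B$ is the blow-up of $X\times B$ along the graph $\{(q,q):q\in B\}$ of the open inclusion $B\hookrightarrow X$, so that the fibre over $q$ is $\mathrm{Bl}_qX$ and the central fibre over $p$ is $M$, and the blow-down $\pi\colon\mathcal M\to X\times B$ is a morphism over $B$ restricting to $\pi_0\colon M\to X$ on the central fibre. Recall from the discussion preceding the lemma that $H^i(M,\Theta_M)\cong H^i\bigl(X,(\pi_0)_*\Theta_M\bigr)$, that $\beta$ is the map induced on $H^1$ by $(\pi_0)_*\Theta_M\to\Theta_X$, and that there is a short exact sequence of sheaves $0\to(\pi_0)_*\Theta_M\to\Theta_X\to N_p\to 0$ with $N_p$ the skyscraper $\mathbb C^2$ at $p$. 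Since $\Aut(X)$ is discrete we have $H^0(X,\Theta_X)=0$, so the long exact sequence degenerates to $0\to H^0(N_p)\xrightarrow{\ \delta\ }H^1(M,\Theta_M)\xrightarrow{\ \beta\ }H^1(X,\Theta_X)\to 0$; in particular $\ker\beta=\Image(\delta)$ has complex dimension $2=\dim_{\mathbb C}T_pB$, and $T_pB=T_pX$ is canonically identified with $H^0(N_p)$.

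The inclusion $\Image(\KS_p)\subseteq\ker\beta$ is then formal. By naturality of Kodaira--Spencer maps under the $B$-morphism $\pi$, and because $X\times B\to B$ is the constant family (whose Kodaira--Spencer map vanishes), the composite $(d\pi_0)_*\circ\KS_p\colon T_pB\to H^1(M,\pi_0^*\Theta_X)$ is zero; under the projection-formula identification $H^1(M,\pi_0^*\Theta_X)\cong H^1(X,\Theta_X)$ (using $(\pi_0)_*\mathcal O_M=\mathcal O_X$ and $R^1(\pi_0)_*\mathcal O_M=0$) the map $(d\pi_0)_*$ becomes precisely $\beta$, so $\beta\circ\KS_p=0$.

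For equality I would compute $\KS_p$ in coordinates. Choose coordinates $(z_1,z_2)$ on $B$ centred at $p$ and the same coordinates $(w_1,w_2)$ on $X$ near $p$. In a natural atlas of $\mathcal M$ there is a chart $U_0$ carrying the ambient coordinates $(w_1,w_2)$, whose transition functions to the two charts over the exceptional locus are $(w_1,w_2)=(a_1+z_1,\,a_1b_1+z_2)$ and the analogous expression in the other chart --- these are the only transition functions that depend on $z$. Differentiating at $z=0$ shows that $\KS_p(\partial_{z_1})$ and $\KS_p(\partial_{z_2})$ are represented by the \v{C}ech $1$-cocycles given by the vector fields $\partial_{w_1}$ and $\partial_{w_2}$ on the overlaps with $U_0$. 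These are exactly the images under $\delta$ of the classes $\partial_{w_1},\partial_{w_2}\in T_pX=H^0(N_p)$: the vector field $\partial_{w_j}$, viewed on a punctured neighbourhood of $p$, is a section of $(\pi_0)_*\Theta_M$ there which does not extend across $p$ (again because $H^0(X,\Theta_X)=0$), and $\delta$ carries its value at $p$ to this very cocycle. Hence $\KS_p=\delta$ after the identification $T_pB=T_pX\xrightarrow{\sim}H^0(N_p)$; since $\delta$ is injective, $\Image(\KS_p)=\Image(\delta)=\ker\beta$, which is the assertion.

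The main obstacle is the bookkeeping in this last step: writing down a complete atlas for the blow-up of $X\times B$ along the graph of $B\hookrightarrow X$, isolating the $z$-dependence of the transition functions, and tracing the resulting Kodaira--Spencer $1$-cocycle through the isomorphism $H^1(M,\Theta_M)\cong H^1(X,(\pi_0)_*\Theta_M)$ and the connecting homomorphism of $0\to(\pi_0)_*\Theta_M\to\Theta_X\to N_p\to 0$ --- equivalently, recognising it inside the local-cohomology picture $\ker\beta=\Image\bigl(H^1_{\{p\}}(X,(\pi_0)_*\Theta_M)\to H^1(X,(\pi_0)_*\Theta_M)\bigr)$. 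Once $\KS_p=\delta$ is verified, the dimension count together with $\beta\circ\KS_p=0$ makes the conclusion immediate, and the whole argument rests on the hypothesis $H^0(X,\Theta_X)=0$, i.e.\ on the discreteness of $\Aut(X)$.
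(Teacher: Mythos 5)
Your proposal is correct, but it proves the lemma by a different mechanism than the paper does. The paper never computes $\KS_p$ explicitly: it first argues injectivity of $\KS_p$ abstractly, invoking the criterion that a deformation is trivial iff its Kodaira--Spencer map vanishes (Manetti, Thm.\ I.4.1) together with discreteness of $\Aut(X)$ to rule out a positive-dimensional subgerm of $B$ over which the blow-up family is trivial, so that $\KS_p(\T_pB)$ has rank $2$; it then notes $\beta\circ\KS_p=0$ ``naturally'' and concludes by comparing dimensions with $\ker(\beta)$, whose dimension $2$ was already extracted from the long exact sequence of $0\to\pi_*\Theta_{\tilde X}\to\Theta_X\to N_p\to 0$ in the preceding lemma. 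You instead identify $\KS_p$ outright with the connecting homomorphism $\delta:H^0(N_p)\to H^1(M,\Theta_M)$ of that same sequence, via the concrete realization of $\mathcal M$ as the blow-up of $X\times B$ along the graph and a \v{C}ech cocycle computation in the standard exceptional charts; injectivity then comes from $H^0(X,\Theta_X)=0$ (which is the correct translation of discreteness of $\Aut(X)$ for compact $X$), and $\Image(\KS_p)=\Image(\delta)=\ker(\beta)$ falls out of exactness without a separate dimension count. Your route is more self-contained and arguably more robust: the paper's rigidity step (passing from vanishing of $\KS$ on a tangent vector to triviality over a positive-dimensional subgerm, and excluding that via discreteness of $\Aut(X)$) is stated rather loosely, whereas your argument replaces it by the classical and checkable fact $\KS_p=\delta$; the cost is the coordinate bookkeeping you flag, and one small remark: the reason $\partial_{w_j}$ fails to be a section of $(\pi_0)_*\Theta_M$ across $p$ is local (vector fields nonvanishing at $p$ do not lift to the blow-up), not the global hypothesis $H^0(X,\Theta_X)=0$ --- the latter is what you actually need, via exactness, for injectivity of $\delta$. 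The paper's proof, by contrast, is shorter and leans on the dimension computation it has already done for the blow-up exact sequence.
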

\begin{proof}
By theorem $I.4.1$\cite{Manetti2004}, a deformation $\xi:X\hookrightarrow \mathcal{X} \hookrightarrow (B,p)$ is trivial if and only if its Kodaira-Spencer map $\KS_\xi$ is $0$ at $\T_pB$.\\
To show the injectivity of the Kodaira Spencer map at $\T_pB$, it suffices to show that there is no analytic submanifold $N \subseteq B $ of dimension bigger than $0$ which defines a trivial deformation. Since $\Aut(X)$ is discrete, we know that $N$ with described property does not exist. As a result, rank of $\KS(\T_pB)$ must be $2$.\\
Naturally, the composition between $\beta$ and $\KS_p$ is $0$. Because of dimension reason, $\KS_p(\T_pB)\cong \ker(\beta)$.
\end{proof}
Now we are ready to show Theorem $\autoref{thm:regularity of FSW solutions on blow up family}$.
\begin{proof}[proof of $\autoref{thm:regularity of FSW solutions on blow up family}$]
By \cite{Taubes1995},\cite{Taubes1996},\cite{Kronheimer1998}, solution to $FSW(B\times \mathbb{D}^2,\mathcal{L},\mathfrak{s}_{\alpha-e})$ is one to one correspondent to curve in the class $\alpha-e$. Since curve in class $\alpha$ only appear in the central fiber, curve in the class $\alpha-e$ only appears in the fibers $(b,0)$, the central fiber of the product family. And it will not be hard to see there is a unique curve $\tilde{C}$ in the fiber $(b,0)$.\\
Since $C$ is a regular solution, the $\KS_0(\T_0B)$ is surjective to $H^1(C,\mathcal{N}_{C|X})$. In order to show the transversality of $\tilde{C}$, we just need to show $\KS_0(\T_0\mathbb{D}^2)\cong \ker(\gamma)$.\\
In $\autoref{lem: Image of Kodiara Spencer map of a blow up family is isomorphic to ker(beta)}$, if we take family blow up of $X$ along a holomorphic disk in $C$ containing $p$, we get a deformation of pair of complex manifolds $\{(X_t,C_t)\}$, so the image of corresponding Kodaira Spencer map must be contained in $\Image(H^1(\tilde{X},\Theta_{\tilde{X}}(- log \tilde{C}))\rightarrow H^1(\tilde{X},\Theta_{\tilde{X}}))$. Because of the dimension reason, since we take disk intersecting $C$ transversally, we can see $\KS_0(\T_0B) \cong \ker(\gamma)$. That completes the proof.   
\end{proof}
\begin{cor}\label{cor: a blow up formula for FSW on Kahler surface}
Let $\mathcal{M}:=\{(X_t,\omega_t)|t\in B \}$ be a Kahler family over some oriented closed manifold $B$, and the holomorphic automorphism group on each fiber is discrete. Suppose that $\FSW(B,\mathcal{L},\mathfrak{s}_\alpha)=1$ is represented by a curve $C\in\alpha$ in one fiber $b\in B$. 
Let $\mathcal{F}$ be a algebraic complex manifold which is a fiber bundle over $B$, with each fiber diffeomorphic to $\Sigma_g$, a Riemann surface of genus $g$. We take a bundle map $f:\mathcal{F} \rightarrow \mathcal{M}$ s.t. on each fiber, $f$ intersects $C$ transversally, $f_b$ is holomorphic in a neighborhhod of $\Image(f_b)\cap C$. Then we take family blow-up along the image of $f$ and get a Kahler family $\mathcal{M}^\prime$ over $\mathcal{F}$. Then 
$$\FSW(\mathcal{F},\tilde{\mathcal{L}},\mathfrak{s}_{\alpha-e}) =\pm \langle [C], \Image(f_b) \rangle \quad $$
Here, $\tilde{\mathcal{L}}$ is the Taubes chamber determiend by family blowup of the family Kahler structures.
\end{cor}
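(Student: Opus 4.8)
The plan is to pin down the family moduli space of $\FSW_{\mathcal{F}}(\mathfrak{s}_{\alpha-e})$ explicitly and then transport the regularity of \autoref{thm:regularity of FSW solutions on blow up family} to each of its points.

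First I would identify the solutions. Working in the Taubes chamber (compare \autoref{ex: Taubes Seiberg Witten chamber}), so that there are no reducibles and solutions correspond to effective divisors, a point of $\mathcal{M}^{\mathfrak{s}_{\alpha-e}}_{(g,\eta)}$ over $\phi\in\mathcal{F}$ lying above $t\in B$ is an effective divisor on the fiber $\mathrm{Bl}_{f(\phi)}X_t$ of $\mathcal{M}^\prime$ in the class $\alpha-e$. Such a divisor pairs to $1$ with the exceptional class, hence meets the exceptional sphere in a single point and projects to an effective divisor of $X_t$ in class $\alpha$ through $f(\phi)$. Since the hypothesis says $\FSW_B(\mathfrak{s}_\alpha)=1$ is realized by the single curve $C$ in the single fiber $b$, this forces $t=b$ and $f(\phi)\in C$, and then the divisor is necessarily the proper transform $\tilde{C}_\phi$ of $C$; conversely each such $\phi$ yields exactly this solution. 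Transversality of $f_b:=f|_{\Sigma_g}$ to $C$, together with $\dim\Sigma_g=\dim C=\tfrac{1}{2}\dim X_b$, shows that $f_b^{-1}(C)=\{\phi_1,\dots,\phi_r\}$ is a finite set, and its signed count in $\Sigma_g$ is by definition $\langle[C],\Image(f_b)\rangle$ (read as $\langle[C],(f_b)_*[\Sigma_g]\rangle$).

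Next I would prove regularity. Near each $\phi_i$ the hypothesis guarantees that $f$ is fiberwise holomorphic and transverse to $C$; restricting $B$ to a coordinate polydisk and $\phi_i$ to a disk $\mathbb{D}^2\subset\Sigma_g$, the family $\mathcal{M}^\prime\to\mathcal{F}$ becomes exactly the family blow-up of $\mathcal{M}$ along a fiberwise holomorphic disk transverse to $C$ treated in \autoref{thm:regularity of FSW solutions on blow up family}. Regularity of a family Seiberg--Witten solution is a statement about surjectivity of the linearization at that solution, so it is unchanged by this localization, and \autoref{thm:regularity of FSW solutions on blow up family} gives that $\tilde{C}_{\phi_i}$ is a regular, hence isolated, point of $\mathcal{M}^{\mathfrak{s}_{\alpha-e}}_{(g,\eta)}$. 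Therefore this moduli space is the finite set $\{\tilde{C}_{\phi_1},\dots,\tilde{C}_{\phi_r}\}$ and $\FSW_{\mathcal{F}}(\mathfrak{s}_{\alpha-e})$ is the corresponding signed count (and, for the mod $2$ invariant, simply $r \bmod 2$, which matches $\langle[C],\Image(f_b)\rangle$ modulo $2$).

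Finally I would compare orientations. On each K\"ahler fiber the Seiberg--Witten moduli space carries its complex orientation, and near $f_b(\phi_i)$ the whole deformation picture (fiber, the curve $C$, and the disk $\Image(f_b)$) is holomorphic, so the sign attached to $\tilde{C}_{\phi_i}$ in $\mathcal{M}^{\mathfrak{s}_{\alpha-e}}_{(g,\eta)}$ agrees, up to a single global sign $\pm$ independent of $i$, with the (positive) local intersection sign of $\Image(f_b)$ with $C$ at $f_b(\phi_i)$. Summing over $i$ then gives $\FSW_{\mathcal{F}}(\mathfrak{s}_{\alpha-e})=\pm r=\pm\langle[C],\Image(f_b)\rangle$. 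The main obstacle I anticipate is precisely this last step: reconciling the homology orientation of the determinant line bundle used to normalize $\FSW$ with the fiberwise complex orientations and with the orientation of $f_b^{-1}(C)\subset\Sigma_g$. The identification of the solution set and its regularity, by contrast, are essentially immediate consequences of \autoref{thm:regularity of FSW solutions on blow up family} once the localization above is in place.
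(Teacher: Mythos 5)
Your proposal is correct and is essentially the argument the paper intends: the corollary is stated without its own proof as a direct consequence of \autoref{thm:regularity of FSW solutions on blow up family}, and your localization near each point of $f_b^{-1}(C)$, identification of the moduli space with the proper transforms $\tilde{C}_{\phi_i}$, and positive-intersection count is exactly that deduction. The orientation caveat you raise is harmless here, since the statement carries a $\pm$ and the paper works with mod $2$ family Seiberg--Witten invariants anyway.
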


\section{Construction of parameter spaces and calculation of torus} 
\label{sec:Section 4}
\subsection{Moduli spaces of complex structures and Kahler families }
\label{subsec:4.1}
In this section, we'll present the main calculation in this paper. We'll describe the construction of symplectic families $[\mathcal{S}^\delta_\kappa(i_1,...,i_k)]$ in detail and apply the invariant in $\autoref{subsec:3.3}$ to show they span an infinitely generated group. In $\autoref{subsec:4.1},\autoref{subsec:4.2}$, we'll study the case of multiple-point blowup of $\T^4$. In $\autoref{subsec: 4.3}$, we'll study the case of multiple-point blowup of $K3$ surface and we'll study the case of multiple-point blowup of Enriques surface in $\autoref{subsec: 4.4}$.\\
For the sake of writing convenience, in $\autoref{subsec:4.1},\autoref{subsec:4.2}$, we'll use $\mathcal{M}$ instead of $\mathcal{M}_{\T^4}$ to denote the period domain of $\T^4$, and let $\mathcal{M}_\kappa$ denote polarized period domain of $\T^4$ similarly. Again, let $\kappa$ be some non-resonant
positive class on $H^2(\T^4;\mathbb{R})$. \\
Recall the definitions $\eqref{equa: definition of Delta},\eqref{equa: definition of Delta_k}$.
$$\Delta=\{\delta\in H^2(\T^4;\mathbb{Z})|\delta^2 =0, \delta\text{ primitive}\} $$
We fix a positive number $n$ and a non-negative number $k\leq n$. For any $\delta\in\Delta_k$, we can define a hyperplane in $\mathcal{M}$:
\begin{equation}\label{equa: definition of hyperplane H_delta}
H_\delta:=\{u\in\mathcal{M}|\langle u, \delta \rangle =0\}
\end{equation}
For any complex structure on $\T^4$ parametrized by $u\in H_\delta$, we have elliptic fibration on $\T^4$ with fiber in the class $\delta$.\\
It's not hard to see that $H_\delta$ intersects $\mathcal{M}_\kappa$ transversally. We define $H^\delta_\kappa$ to be their intersection. This definition coincides with the definition in the introduction section.\\
For $\delta^\prime\neq\delta\in\Delta$, we can see that the intersection between $H^\delta_\kappa$ and $H^{\delta^\prime}_\kappa$ is transverse, and there are only countable many elements in $\Delta$, so we can take a generic $u_0\in H^\delta_\kappa$, i.e. $u_0\notin H_{\delta^\prime}$ for all $\delta^\prime\neq \pm\delta$ and we can take a holomorphic disk $\mathbb{D}_\delta\subseteq \mathcal{M}_\kappa$, intersecting $H^\delta_\kappa$ transversally at $u_0$.\\
The complex structure parameterized by $u_0\in H^\delta_\kappa$ defines a fibration $\pi_\delta$ and homology class of the fibers $F_\delta\subseteq T^4$ is equal to $\delta$.\\
Considering $\pi_\delta$ as a smooth map defined on $T^4$, we can define 
\begin{align}\label{diagram:P bar}
\bar{\mathcal{P}}:=\mathcal{M}\times Conf_n(\T^4):=\{(u,x_1,...,x_n)\in\mathcal{M}\times(\T^4)^n|x_i \neq x_j \text{ for all }i,j\}   \\
\label{diagram: P}
\mathcal{P}:=\{(u,x_1,...,x_n)\in\mathcal{M}\times(\T^4)^n|x_1=[0,0], x_i \neq x_j \text{ for all }i,j\}\\
\label{diagram: P_kappa}
\mathcal{P}_\kappa:=\{(u,x_1,...,x_n)\in\mathcal{P}|u\in\mathcal{M}_\kappa\}\\
\label{diagram:H }
\mathcal{H}^\delta(i_1,i_2,...,i_k):= \{(u_0,x_1,...,x_n)\in\mathcal{P}|\pi_\delta(x_{i_1}) = \pi_\delta( x_{i_2}) = ... = \pi_\delta(x_{i_k})\}\\
\label{diagram: H_kappa}
\mathcal{H}^\delta_\kappa(i_1,i_2,...,i_k):= \{(u_0,x_1,...,x_n)\in\mathcal{P}_\kappa|\pi_\delta(x_{i_1}) = \pi_\delta( x_{i_2}) = ... = \pi_\delta(x_{i_k})\}\\
\label{diagram: D}
\mathcal{D}^\delta_\kappa(i_1,...,i_k):= (\mathbb{D}_\delta\times Conf_n(\T^4)) \cap \mathcal{P}_\kappa
\end{align}
$\bar{\mathcal{P}}$ is a deformation space of complex structures on $\T^4\# n\overline{\mathbb{CP}^2}$ by following construction:\\
For a complex structure $J_u$ on $\T^4$ and pairwise different points $x_1,...,x_n\in T^4$, we blow up the complex torus $\T^4$ at $x_1,...,x_n$.\\
$\mathcal{P}$ is a subspace of $\bar{\mathcal{P}}$ consisting of a fixed $x_1$, the position of the first exceptional curve.\\
$\mathcal{P}_\kappa$ is the deformation space of complex structures on $\T^4\# n\overline{\mathbb{CP}^2}$ s.t. $u$ is $\kappa-$polarized. 
By the property of the ample cone, if the pairing $\langle\kappa-\lambda(e_1+...+e_n),C\rangle$ is positive for any curve $C$ on $(\T^4,J_u)$, then $\pi^*\kappa-\lambda(e_1+...+e_n)$ is a Kahler class of the surface represented by $(u,x_1,...,x_n)$. In this case, we also fix the point $x_1=[0,0]$.\\
$\mathcal{H}^\delta_\kappa(i_1,i_2,...,i_k)$ defines a deformation space of complex $\T^4$ that admits an embedded elliptic curve in the class $\delta-e_{i_1}-...-e_{i_k}$.\\
$\mathcal{D}^\delta_\kappa(i_1,...,i_k)$ is a deformation space of complex structures of $\T^4\# n\overline{\mathbb{CP}^2}$ which is blowup of torus in $\mathcal{M}_\kappa$.\\
It is easy to see all the spaces defined above are complex manifolds and $\mathcal{H}^\delta_\kappa(i_1,i_2,...,i_k)$ is a submanifold in $\mathcal{D}^\delta_\kappa(i_1,...,i_k)$ of $codim_\mathbb{C}k$.
\begin{rmk}\label{rmk:dimension of parameter spaces}
We can see that the dimension of $\mathcal{P}$ is $2n+4$ as a complex manifold, dimension of $\mathcal{P}_\kappa$ is $2n+3$, dimension of $\mathcal{H}^\delta_\kappa(i_1,...,i_k)$ is $2n-k-1$ and the dimension of $\mathcal{D}^\delta$ is $2n-1$.
\end{rmk}
\subsection{Family of marked polarized surfaces and Evaluation of $q-$invariant}\label{subsec:4.2}
In this section, let $\mathcal{M}_\kappa$ be $\kappa-$polarized period domain for a non-resonant $\kappa$, and let $\lambda$ be a positive number s.t. $n\lambda$ is small enough.\\
Let $B$ be a smooth submanifold in $\mathcal{P}_\kappa$, and since $\mathcal{M}_\kappa$ is contractible parameter space of complex structures, there is a smooth family $p: \mathcal{B} \rightarrow B$ of $n-$point blow-ups of complex $\T^4$.\\
We define $\Lambda^n_{\mathbb{R}}$, let the positive cone of $T^4$ be one component of
\begin{equation}\label{equa:definition of positive cone}
\{v\in H^2(T^4;\mathbb{R})|v^2>0\}
\end{equation}
Then the locally constant sheaf with value in $\Lambda^n_\mathbb{R}$ defines a smooth fiber bundle $\bar{\Lambda}$ over $B$.\\
By Demaily and Paun\cite{DemaillyPaun2004}, given any section $s:B\rightarrow \bar{\Lambda}$ s.t. $\langle s(b), [C]\rangle >0$ for every curve $C$ in the surface $\mathcal{B}_b$, we can get a family $\{(X_b,s(b))|b\in B\}$ of pair (complex manifold, Kahler class of the manifold) s.t. $X_b$ is diffeomorphic to $\T^4$.\\
We identify the normal bundle of $\mathcal{H}^\delta_\kappa(i_1,...,i_k)$ in $\mathcal{D}^\delta_\kappa(i_1,...,i_k)$ with a tubular neighborhood of it, and define $\mathcal{F}^\delta_\kappa(i_1,...,i_k)$ to be a fiber of the normal bundle, intersecting $\mathcal{H}^\delta_\kappa(i_1,...,i_k)$ at a generic point, i.e. the intersecting point is $(u,x_1,...,x_n)$ with
$u$ a generic point in $H_\delta$. Then we define $\mathcal{S}^\delta_\kappa(i_1,...,i_k)$ to be the boundary of $\mathcal{F}^\delta_\kappa(i_1,...,i_k)$. Applying suitable perturbation if necessary, we can assume that $\forall (u,x_1,...,x_n)\in \mathcal{S}^\delta_\kappa(i_1,...,i_k)$, it is contained in $\mathcal{M}_{\kappa,\lambda,...,\lambda}$. We'll briefly explain why we can do it.\\
Now we suppose the intersection of $\mathcal{H}^\delta_\kappa(i_1,...,i_k)$ and $\mathcal{F}^\delta(i_1,...,i_k)$ is generic, so before the blowing up of $i_k-$th point, we get a $\mathbb{D}^{2k-2}$ family of complex $T^4\# (n-1)\overline{\mathbb{CP}^2}$, each of them is compatible with a symplectic form in the class $\kappa-\lambda(e_1+...+\hat{e}_{i_k}+...+e_n)$. 

Now we take a $\mathbb{D}^{2k}$ family by blowing up at each point of a family of holomorphic disks $\{\mathbb{D}_t,t\in\mathbb{D}^{2k-2}\}$ in the $\mathbb{D}^{2k-2}$ family of $T^4\#(n-1)\overline{\mathbb{CP}^2}$, where the disk in the central fiber intersects the curve $\delta-e_{i_1}-...-e_{i_{k-1}}$ transversely once.

Now we'll prove that each fiber in the boundary of the $\mathbb{D}^{2k}$ family admits a compatible symplectic form in class $\kappa-\lambda(e_1+...+e_n)$.

\begin{lem}\label{lem: induction on ball embedding}
When $\epsilon$ is positive and $n\epsilon$ is small enough, from the $\mathbb{D}^{2k-2}$ Kahler family of $(n-1)$ equal size blowup, we can get a $\mathbb{D}^{2k}$ Kahler family with same class on the boundary.
\end{lem}
\begin{proof}
First, on the central fiber, we take $\partial \mathbb{D}_0$. Since we avoid the unique curve $\delta-e_1-...-e_{k-1}$, the corresponding blowups admit Kahler form in th class $\kappa-\lambda(e_1+...+e_n)$. So we get $\{a\}\times\partial \mathbb{D}$.\\
By the Kodaira-Spencer stability condition, we can assume that each fiber in the family $\mathbb{D}^{2k-2}\times \partial\mathbb{D}$ admits the Kahler form we need.\\
As for $\partial\mathbb{D}^{2k-2}\times \mathbb{D}^2$, we close the loops to get disk. By homotopy property and genericity assumption, we extend the loop to disk without intersecting non-generic points i.e. the complex structure which is not compatible to any Kahler form in $\kappa-\lambda(e_1+...+e_n)$.\\ 
And we know similar result holds for $K3$ surfaces.
\end{proof}

\begin{defi}\label{defi: F space and S space}
Complex families $\mathcal{F}^\delta_\kappa(i_1,...,i_k)$ and $\mathcal{S}^\delta_\kappa(i_1,...,i_k)$ of $n-$point blow-up of $\T^4$ are defined as above.
\end{defi}
It's easy to see $\mathcal{F}^\delta_\kappa(i_1,...,i_k)$ defines a smoothly trivial family of complex manifolds, so we identify the bundle $\bar{\Lambda}$ over $\mathcal{F}^\delta_\kappa(i_1,...,i_k)$ as a product $\Lambda^n_\mathbb{R}\times \mathbb{D}^{2k}$.\\ 
By \cite{DemaillyPaun2004}, $\kappa-\lambda(e_1+...+e_n)$ is Kahler class of any fiber on $\mathcal{S}^\delta_\kappa(i_1,...,i_k)$. And we can extend the constant section over $\mathcal{S}^\delta_\kappa(i_1,...,i_k)$ to a section of $\Lambda^n_{\mathbb{R}}\times \mathbb{D}^{2k}$ over $\mathcal{F}^\delta_\kappa(i_1,...,i_k)$.\\
By \cite{KodairaSpencer1960}, we can take a family of Kahler forms in the Kahler classes obtained above. Then the restriction of Kahler forms in $S^\delta_\kappa(i_1,...,i_k)$ defines an element in $\pi_{2k-1}(S_{\pi^*\kappa-\lambda(e_1+...+e_n)})$. We call it $[S^\delta_\kappa(i_1,...,i_k)]$.\\
We define 
\begin{equation}\label{equa: definition of I_k,n}
I_{k,n}:=\{\{i_1,...,i_k\}\subseteq \{1,...,n\}|1 \leq i_1<...<i_k \leq n\}
\end{equation}
For a fixed $\lambda$ described above, we can define the index sets $\Delta_k$ as in $\eqref{equa: definition of Delta_k}$
$$\Delta_k=\{\delta\in\Delta|(k-1)\lambda<\langle \delta, \kappa \rangle <k\lambda\} $$
By $\autoref{sec:3}$, for every $\delta\in\Delta_k$ and any sequence $1 \leq i_1 <... < i_k \leq n$, we can define the homomorphism 
\begin{equation}\label{equa: definition of q}
q_{k,n}:=\bigoplus_{(\delta,i_1,...,i_k)\in\Delta_k\oplus I_{k,n}}q_{\delta-e_{i_1}-...-e_{i_k}} : \pi_{2k-1}(S_{[\tilde{\omega}]},\tilde{\omega}) \rightarrow \bigoplus_{(\delta,i_1,...,i_k)\in\Delta_k\oplus I_{k,n}}\mathbb{Z}_2
\end{equation}
Now we are ready to prove part $1$ of Theorem $\autoref{thm: infinitely generated symp family}$.
\begin{proof}[proof of part $1$ in Theorem \autoref{thm: infinitely generated symp family}]
For any fixed $n$ and $k$, applying suitable negative inflation on the family symplectic forms in $\kappa-\lambda(e_1+...+e_n)$ on $\mathcal{F}^\delta_\kappa(i_1,...,i_k)$, we can see there will be no curve in the class $2\sum_{t\in I}e_t + \sum_{t\in\{1,...,n\}\backslash I}e_t-\delta$ because of the area. As a result, $Q^-_{\delta-e_{i_1}-...-e_{i_k}}([S^\delta_\kappa(i_1,...,i_k)])=0$.\\
By the work of Taubes, since $\delta-(e_{i_1}-...-e_{i_k})$ is not in $(1,1)$ class on any non-central fiber of $\mathcal{F}^\delta_\kappa(i_1,...,i_k)$. And because of positivity of intersection, there is only one curve on central fiber in the class $\delta-(e_{i_1}-...-e_{i_k})$. In conclusion, there is only one solution for $\FSW_{\mathcal{F}^\delta_\kappa(i_1,...,i_k)}(\mathfrak{s}_{\delta-e_{i_1}-...-e_{i_k}})$ on the central fiber, and it remains to show the transversality of the solution. Without loss of generality, we'll prove the transversality of the solution over $\mathcal{F}^\delta_\kappa(1,...,n)$ and show it by induction on $n$.\\
By Theorem $\autoref{thm:regularity of FSW solutions on blow up family}$, if the holomorphic automorphism group on $\T^4\#n\overline{\mathbb{CP}^2}$ is discrete for any $n\geq 1$, we can prove the transversality by induction.
\begin{lem}\label{lem:discreteness of automorphism group on blow up of torus}
Automorphism group on $n-$point blow up of complex torus $T^4$ is discrete for any $n>0$.
\end{lem}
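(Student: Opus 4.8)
The plan is to reduce the statement to a vanishing result for global holomorphic vector fields. Recall that for a compact complex manifold $Y$ the group $\Aut(Y)$ is a finite-dimensional complex Lie group whose Lie algebra is canonically identified with $H^{0}(Y,\Theta_{Y})$; consequently $\Aut(Y)$ is discrete (i.e. $0$-dimensional) precisely when $H^{0}(Y,\Theta_{Y})=0$. So it suffices to prove that $H^{0}(X_{n},\Theta_{X_{n}})=0$, where $X_{n}\to T^{4}$ denotes the blow up of the complex torus at $n$ distinct points $x_{1},\dots,x_{n}$ with $n\ge 1$.

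Next I would compute $H^{0}(X_{n},\Theta_{X_{n}})$ by descending along the blow down maps. For a single blow up $\pi:\tilde X\to X$ of a smooth surface at a point $p$ one has $R^{1}\pi_{*}\Theta_{\tilde X}=0$, and the exact sequence $\eqref{equa: short exact sequence, blow up}$ identifies $\pi_{*}\Theta_{\tilde X}$ with the subsheaf of $\Theta_{X}$ of vector fields vanishing at $p$; hence $H^{0}(\tilde X,\Theta_{\tilde X})=H^{0}(X,\pi_{*}\Theta_{\tilde X})$ is exactly the space of global holomorphic vector fields on $X$ vanishing at $p$, and in particular it injects into $H^{0}(X,\Theta_{X})$. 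Applying this to the tower $X_{n}\to X_{n-1}\to\cdots\to X_{1}\to T^{4}$ (the centers being distinct points, so there are no infinitely near ones), we get an injection of $H^{0}(X_{n},\Theta_{X_{n}})$ into the space of global holomorphic vector fields on $T^{4}$ vanishing at $x_{1},\dots,x_{n}$; imposing vanishing at the single point $x_{1}$ already suffices.

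Finally I would invoke the explicit description $H^{0}(T^{4},\Theta_{T^{4}})\cong\mathbb{C}^{2}$: writing $T^{4}=\mathbb{C}^{2}/L$, every global holomorphic vector field is a constant-coefficient field $a\,\partial/\partial z_{1}+b\,\partial/\partial z_{2}$, which, if nonzero, vanishes nowhere on $T^{4}$. Therefore the only such field vanishing at $x_{1}$ is the zero field, so $H^{0}(X_{n},\Theta_{X_{n}})=0$, and $\Aut(X_{n})$ is discrete for every $n\ge 1$.

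There is essentially no serious obstacle here; the only bookkeeping point requiring a moment of care is the identification $\pi_{*}\Theta_{\tilde X}=\{v\in\Theta_{X}:v(p)=0\}$ together with $R^{1}\pi_{*}\Theta_{\tilde X}=0$, which are precisely the facts already used (via \cite{BurnsWahl1974}) in the proof of \autoref{lem: blow up, r* and cohomology of holomorphic sheaves}. Alternatively, one can bypass sheaf pushforwards entirely: the identity component $\Aut^{0}(X_{n})$ acts trivially on $H^{2}$, hence preserves each exceptional curve, so contracting them produces a homomorphism $\Aut^{0}(X_{n})\to\Aut(T^{4})$ landing in translations and fixing $x_{1}$; since a nonzero translation of $T^{4}$ moves every point, this forces $\Aut^{0}(X_{n})$ to be trivial.
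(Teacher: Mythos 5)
Your proof is correct, but it takes a different route from the paper. You reduce discreteness to the vanishing of the Lie algebra $H^{0}(X_{n},\Theta_{X_{n}})$ of $\Aut(X_{n})$ (Bochner--Montgomery), then compute this space by descending through the blow-downs using $R^{1}\pi_{*}\Theta_{\tilde X}=0$ and the identification of $\pi_{*}\Theta_{\tilde X}$ with vector fields vanishing at the center -- the same Burns--Wahl input already used in the proof of the lemma on blow-ups and cohomology of holomorphic sheaves -- and finish with the observation that holomorphic vector fields on $T^{4}$ are constant, hence nowhere vanishing unless zero. The paper instead argues directly with the full group: any automorphism permutes the exceptional curves, hence descends to an automorphism of $T^{4}$ preserving the finite set of blown-up points, and after composing with a translation one obtains an element of the discrete group $\Aut(\Lambda)$; since only finitely many translations preserve a finite set, $\Aut(X_{n})$ is discrete. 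Your version has the advantage of producing exactly the statement $H^{0}(X_{t},\Theta_{X_{t}})=0$ that is the hypothesis actually invoked in the regularity theorem for family Seiberg--Witten solutions, and it recycles machinery already present in the paper; the paper's version is more elementary, avoiding the Lie-group structure theorem, and controls the whole automorphism group rather than just its identity component. Your alternative sketch at the end (contracting the exceptional curves preserved by $\Aut^{0}(X_{n})$ and landing in the translations fixing $x_{1}$) is essentially the paper's argument restricted to the identity component; there you should spell out that the image lies in translations because $\Aut^{0}(X_{n})$ is connected and the identity component of $\Aut(T^{4})$ consists of translations, but this is a one-line fix, not a gap.
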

\begin{proof}
Let $X_n$ be $n-$point blow-up of a complex torus, let $\mathcal{E}$ be the collection of exceptional curves in the surface, and let $A$ be an automorphism of $X_n$. So $A(\mathcal{E})=\mathcal{E}$. Then $A$ reduces to an automorphism on $\T^4$ fixing a finite set on $\T^4$. \\
Suppose the minimal model is isomorphic to $\mathbb{C}^2/\Lambda$. After composing the transition group action defined by
$$(a,b)\in\mathbb{C}^2:[(x,y)]\rightarrow[(x+a,y+b)]$$
we get an automorphism of the lattice $\Lambda$. Then we can see the automorphism group of $k-$point blow up of $\T^4$ can be identified with a subgroup of $\Aut(\Lambda)$. So $\Aut(X_n)$ must be discrete for any $n>0$.
\end{proof}
By the result of Smirnov\cite{Smirnov2020}, we can see the transversality of $\FSW_{\delta-e}$ over $\mathcal{F}^\delta_\kappa(1)$ for one point blow up of $\T^4$.\\
Suppose we have transversality for the family Seiberg Witten equation $$\FSW_{\mathcal{F}^\delta_\kappa(1,...,n-1)}(\mathfrak{s}_{\delta-e_1-...-e_{n-1}})$$
and curve on central fiber is $C$. Recall the construction of $\mathcal{F}^\delta_\kappa(1,...,n-1,n)$, locally, we can get it from a family blow up along a holomorphic disk bundle in $\mathcal{F}^\delta_\kappa(1,...,n-1)$, and the disk in the central fiber is transverse to $C$ in the fiber. Since the automorphism group on each fiber is discrete, by Theorem $\autoref{thm:regularity of FSW solutions on blow up family}$, we get the transversality of $\FSW_{\mathcal{F}^\delta_\kappa(1,...,n)}(\mathfrak{s}_{\delta-e_1-...-e_n})$.\\
In conclusion, the family Seiberg Witten equation $FSW_{\mathfrak{s}_{\delta-(e_1+...+e_n)}}$ on $\mathcal{F}^\delta_\kappa(1,...,n)$ has unique solution on the central fiber and the solution is regular, so 
\begin{equation}\label{equa: calculation of q_n,n}
q_{\delta-(e_1+...+e_n)}([S^\delta_\kappa(1,...,n)])=Q_{\delta-(e_1+...+e_n)}([S^\delta_\kappa(1,...,n)])=1
\end{equation}
By the assumption that $u_0$ is generic in $H_\delta$, we know that for any $\delta^\prime\neq \delta \in\Delta_k$, $\delta^\prime$ is not a $(1,1)-$class on any fiber of $\mathcal{F}^\delta_\kappa(1,...,n)$, so 
$$q_{\delta^\prime-(e_1+...+e_n)}([S^\delta_\kappa(1,...,n)])=0$$
As a result, $q_{n,n}$ in $\eqref{equa: definition of q}$ is surjective when $I_{k,n}=I_{n,n}=\{(1,2,...,n)\}$.\\
On $n-$point blow up of complex $\T^4$, let $I=\{i_1,...,i_k\}$ be any element of $I_{k,n}$. By similar method as above, we can get transversality of solutions of family Seiberg Witten equations, and we can see that 
\begin{align}\label{equa: calculation of q_k,n}
q_{\delta-(e_{i_1}+...+e_{i_k})}([S^\delta_\kappa(i_1,...,i_k)]) &= 1 \ (\text{mod}\ 2) \\
q_{\delta^\prime-(e_{i^\prime_1}+...+e_{i^\prime_k})}([S^\delta_\kappa(i_1,...,i_k)]) &= 0 \ (\text{mod}\ 2) (\forall (\delta^\prime,i^\prime_1,...,i^\prime_k)\neq (\delta,i_1,...,i_k))
\end{align}
As a result, any $q_{k,n}$ is also surjective. Combining $\eqref{equa: calculation of q_k,n}$ with $\autoref{lem: reduction of q to monodromies}$, we completes part $1$ of $\autoref{thm: infinitely generated symp family}$.   
\end{proof}
\begin{rmk}
By the work of Lalonde-Pinsonnault\cite{LalondePinsonnault2002},\cite{Pinsonnault2008} and \cite{kedra2005}, there is a long exact sequence of homotopy groups
\begin{equation}\label{equa: long exact sequence of symp blow up}
...\rightarrow \pi_k(\Symp(\tilde{M},\tilde{\omega})) \xrightarrow{f_k} \pi_k(\Symp(M,\omega)) \xrightarrow{g_k} \pi_k(\Im\Emb(B(r),M)) \xrightarrow{d_k} \pi_{k-1}(\Symp(\tilde{M},\tilde{\omega})) \rightarrow ...
\end{equation}
Here, $(M,\omega)$ is a $4-$dimensional symplectic manifold, $(\tilde{M},\tilde{\omega})$ is blow-up of $(M,\omega)$ s.t. size of exceptional divisor is minimal and $\Im\Emb(B(r),M)$ is the space of image set of symplectic ball embeddings. Let $(\tilde{M},\tilde{\omega})$ be the Kahler surface diffeomorphic to $\T^4\#n\overline{\mathbb{CP}^2}$, constructed as the fiber of the family in Definition $\eqref{defi: F space and S space}$, we define $g_{n,k}$ to be the homomorphism $g_k$ in $\eqref{equa: long exact sequence of symp blow up}$. \\
Given any $1\leq k< n$ and any sequence $1\leq i_1<...< i_k \leq n$, 
by a series of suitable blowing down $\T^4\#n\overline{\mathbb{CP}^2} \rightarrow \T^4\#k\overline{\mathbb{CP}^2}$(blowing down the exceptional curves with labels in $\{1,...,n\}\backslash\{i_1,...,i_k\}$), the image of $p_{2k-1,*}([\mathcal{S}^\delta_\kappa(i_1,...,i_k)])\in\pi_{2k-2}\Symp_s(\T^4\#n\overline{\mathbb{CP}^2},\tilde{\omega}^n)$ by the map $g_{k+1,k}\circ...\circ g_{n,k}$ is $p_{2k-1,*}([\mathcal{S}^\delta_\kappa(1,...,k)])\in\pi_{2k-2}(\Symp_s(\T^4\#k\overline{\mathbb{CP}^2},\tilde{\omega}^k))$.
\end{rmk}
\subsection{Kahler families for $K3$ surface and calculation}\label{subsec: 4.3}
In this subsection, we'll denote by $\mathcal{M}$ the period domain of $K3$ surface $X$, and let $\mathcal{M}_\kappa$ denote the polarized period domain of $K3$ surface. Again, let $\kappa$ be some non-resonant
positive class on $H^2(X;\mathbb{R})$ and $\lambda$ is a positive number s.t. $n\lambda$ is small. Recall the definition of the space $\mathcal{M}_{\kappa,\lambda
,...,\lambda}\eqref{defi: M_kappa,lambda}$, we can see it's an open dense subset of $\mathcal{M}_\kappa$. \\
In this section, we'll denote by $\Delta$ the index set $\Delta_{K3}$ $\eqref{equa: definition of Delta for K3 surface}$. 
Similar to $\autoref{subsec:4.2}$, we define  
\begin{equation}\label{equa: definition of hyperplane H_delta}
H_\delta:=\{u\in\mathcal{M}|\langle u, \delta \rangle =0\}
\end{equation}
\begin{equation}\label{equa: definition of H^delta_kappa}
H^\delta_\kappa:= H_\delta \cap \mathcal{M}_\kappa
\end{equation}
Again, this will coincides with the definition in introduction section.\\
For any complex $K3$ surface $X_u$ parametrized by $u\in H_\delta$, there is a rational $(-2)$ curve $C\in\delta$ in $X$. By the intersection positivity, the curve $C$ is unique on $X_u$, we can call it $C_u$.\\
Also, we call a point $u$ on $H^\delta_\kappa$ generic if $u\notin H^{\delta^\prime}_\kappa$ for any $\delta^\prime \neq \pm\delta$.\\
Then we define the parameter spaces
\begin{align}\label{diagram:P bar for K3 surface}
\mathcal{P}:=\mathcal{M}\times Conf_n(X):=\{(u,x_1,...,x_n)\in\mathcal{M}\times(X)^n|x_i \neq x_j \text{ for all }i,j\}\\
\label{diagram: P kappa for K3 surface}
\mathcal{P}_\kappa:=\{(u,x_1,...,x_n)\in\mathcal{P}|u\in\mathcal{M}_\kappa\}\\
\label{diagram:H for K3 surface}
\mathcal{H}^\delta(i_1,i_2,...,i_k):= \{(u,x_1,...,x_n)\in\mathcal{P}|x_{i_1},...,x_{i_k}\in C_u\}\\
\label{diagram: H_kappa for K3 surface}
\mathcal{H}^\delta_\kappa(i_1,i_2,...,i_k):= \mathcal{H}^\delta(i_1,...,i_k)\cap \mathcal{P}_\kappa
\end{align}
$\mathcal{F}^\delta_\kappa(i_1,...,i_k) $ is defined as a fiber of the normal bundle of $\mathcal{H}^\delta_\kappa(i_1,...,i_k)$ in 
$\mathcal{P}_\kappa$, intersecting $\mathcal{H}^\delta_\kappa(i_1,...,i_k)$ at a generic point $(u,x_1,...,x_n)$ i.e.  $u\notin H^{\delta^\prime}_\kappa$ for any $\delta^\prime \neq \delta$. And $\mathcal{S}^\delta_\kappa(i_1,...i_k)$ is defined as the boundary of $\mathcal{F}^\delta_\kappa(i_1,...,i_k)$. Applying a suitable perturbation if necessary, we can assume that for any $(u,x_1,...,x_n)\in \mathcal{S}^\delta_\kappa(i_1,...i_k)$, $u\in \mathcal{M}_{\kappa,\lambda,...,\lambda}$.\\
Similar as the construction in $\autoref{subsec:4.2}$, we take a family of Kahler forms over $\mathcal{F}^\delta_\kappa(i_1,...,i_k)$ s.t. its restriction on $S^\delta_\kappa(i_1,...,i_k)$ defines an element in $\pi_{2k+1}(S_{\pi^*\kappa-\lambda(e_1+...+e_n)})$. We call it $[S^\delta_\kappa(i_1,...,i_k)]$.\\
Recall the definition $\eqref{equa: definition of I_k,n}$ and the index sets $\Delta_k$ for $K3$ surfaces $\autoref{subsec:3.3}$, we can define the homomorphism 
\begin{equation}\label{equa: definition of q}
q_{k,n}:=\bigoplus_{(\delta,i_1,...,i_k)\in\Delta_k\oplus I_{k,n}}q_{\delta-e_{i_1}-...-e_{i_k}} : \pi_{2k+1}(S_{[\tilde{\omega}]},\tilde{\omega}) \rightarrow \bigoplus_{(\delta,i_1,...,i_k)\in\Delta_k\oplus I_{k,n}}\mathbb{Z}_2
\end{equation}
Now we are ready to prove part $2$ of Theorem $\autoref{thm: infinitely generated symp family}$.
\begin{proof}[proof of part $2$ in Theorem \autoref{thm: infinitely generated symp family}]
For any fixed $n$ and $k$, applying suitable negative inflation on the family symplectic forms in $\kappa-\lambda(t)(e_1+...+e_n)$ on $\mathcal{F}^\delta_\kappa(i_1,...,i_k)$, there will be no curve in the class $2\sum_{t\in I}e_t + \sum_{t\in\{1,...,n\}\backslash I}e_t-\delta$ because of the area. As a result,
$$Q^-_{\delta-e_{i_1}-...-e_{i_k}}([S^\delta_\kappa(i_1,...,i_k)])=0$$
Since $\delta-(e_{i_1}-...-e_{i_k})$ is not in $(1,1)$ class on any non-central fiber of $\mathcal{F}^\delta_\kappa(i_1,...,i_k)$, so only the central fiber admits a solution of the family Seiberg-Witten equation. And because of the positivity of the intersection, there is only one curve in the central fiber in the class $\delta-(e_{i_1}-...-e_{i_k})$. In conclusion, there is only one solution for $FSW_{\mathcal{F}^\delta_\kappa(i_1,...,i_k)}(\mathfrak{s}_{\delta-e_{i_1}-...-e_{i_k}})$ on the central fiber.\\
Before we show the transversality of the solution, we'll introduce an important fact.
\begin{pro}(\cite{BarthPetersVandevenHulek2015})\label{pro: automorphism group of K3 surface}
Let $X$ be a $K3$ surface, an automorphism of $X$ inducing identity on $H^2(X,\mathbb{Z})$ is the identity.
\end{pro}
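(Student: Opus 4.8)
The plan is to prove this by combining the topological and the holomorphic Lefschetz fixed point formulas. First I would reduce to the case of a nontrivial automorphism of finite order. Since $\mathrm{Pic}(X)$ injects into $H^2(X;\mathbb{Z})$ for a $K3$ surface, an $f\in\Aut(X)$ with $f^*=\mathrm{id}$ on $H^2(X;\mathbb{Z})$ satisfies $f^*L\cong L$ for every line bundle $L$; choosing $L$ ample, $f$ lies in $\Aut(X,L)$, a linear algebraic group whose Lie algebra embeds into $H^0(X,\Theta_X)\cong H^0(X,\Omega^1_X)=0$ (using $K_X=\mathcal{O}_X$, so $\Theta_X\cong\Omega^1_X$), hence finite. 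So such an $f$ has finite order $n$, and it suffices to derive a contradiction from $f\neq\mathrm{id}$, $n>1$.

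Next I would locate the fixed points and control the differential there. As $f^*$ is the identity on $H^*(X;\mathbb{Q})$ (for a $K3$: $H^1=H^3=0$, $H^0\cong H^4\cong\mathbb{Q}$, $\mathrm{rk}\,H^2=22$), the topological Lefschetz number is $L(f)=\chi(X)=24\neq0$, so $\mathrm{Fix}(f)\neq\emptyset$. At a fixed point $p$ the differential $df_p\in GL(T_pX)\cong GL(2,\mathbb{C})$ has order dividing $n$, and since $f^*$ fixes $H^{2,0}(X)=\mathbb{C}\sigma$ with $\sigma$ the nowhere‑vanishing holomorphic $2$‑form, $f^*\sigma=\sigma$, so $\det df_p=1$. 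If $1$ were an eigenvalue of $df_p$, then $df_p$ would be a finite‑order unipotent matrix, i.e. $df_p=I$; by Cartan's linearization theorem the cyclic group $\langle f\rangle$ acts near $p$ as its differential, so $f$ would be the identity on a neighborhood of $p$, making $\mathrm{Fix}(f)$ open and closed in the connected $X$ and forcing $f=\mathrm{id}$, a contradiction. Hence every fixed point is non‑degenerate and isolated, and since $\det_{\mathbb{R}}(I-df_p)=|\det_{\mathbb{C}}(I-df_p)|^2>0$ each contributes $+1$ to $L(f)$; therefore $\#\mathrm{Fix}(f)=24$.

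Finally I would apply the holomorphic Lefschetz (Atiyah–Bott) formula to reach the contradiction. Because $f^*$ fixes $H^{2,0}$ it fixes its complex conjugate $H^{0,2}=H^2(X,\mathcal{O}_X)$, and with $H^1(X,\mathcal{O}_X)=0$, $H^0(X,\mathcal{O}_X)=\mathbb{C}$ this gives $L_{\mathrm{hol}}(f):=\sum_q(-1)^q\mathrm{tr}(f^*\mid H^q(X,\mathcal{O}_X))=1-0+1=2$. On the other hand, writing the eigenvalues of $df_p$ as $\zeta_p,\zeta_p^{-1}$ with $\zeta_p$ a root of unity $\neq1$,
\[
L_{\mathrm{hol}}(f)=\sum_{p\in\mathrm{Fix}(f)}\frac{1}{\det_{\mathbb{C}}(I-df_p)}=\sum_{p}\frac{1}{(1-\zeta_p)(1-\overline{\zeta_p})}=\sum_{p}\frac{1}{2-2\,\mathrm{Re}\,\zeta_p}.
\]
Since $\mathrm{Re}\,\zeta_p<1$ we have $0<2-2\,\mathrm{Re}\,\zeta_p\leq4$, so each summand is $\geq\tfrac14$, whence $L_{\mathrm{hol}}(f)\geq24/4=6>2$, a contradiction. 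Hence $n=1$ and $f=\mathrm{id}$.

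I expect the main obstacle to be the reduction to finite order — the finiteness of $\Aut(X,L)$ for an ample $L$ — since without it the step ``$df_p=I$ implies $f=\mathrm{id}$ near $p$'' fails for non‑periodic automorphisms; the remaining Lefschetz bookkeeping is routine. If preferred, this finiteness (and indeed the whole statement) can simply be cited from \cite{BarthPetersVandevenHulek2015}, but the argument above makes it self‑contained.
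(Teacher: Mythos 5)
Your proposal cannot be compared line-by-line with an argument in the paper, because the paper gives none: the proposition is simply quoted from Barth--Peters--Van de Ven--Hulek. Taken on its own, your proof is correct and is essentially the standard Lefschetz argument: the reduction to finite order, the observation that $f^*\sigma=\sigma$ forces $\det df_p=1$, the exclusion of the eigenvalue $1$ via Cartan linearization plus the identity theorem (note that ``eigenvalue $1$ implies unipotent'' is using $\det df_p=1$, which you have), the topological Lefschetz count giving $24$ isolated fixed points each of local index $+1$, and the Atiyah--Bott estimate $\sum_p |1-\zeta_p|^{-2}\geq 24/4=6>2$ all check out. What your route buys is a self-contained proof rather than a citation; what it costs is generality in the first step: the passage to finite order via an ample $L$ and the linear algebraic group $\Aut(X,L)$ requires $X$ projective. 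That is consistent with the paper's Definition of a $K3$ surface (it is taken to be algebraic), but the fibers to which the paper actually applies the proposition --- $K3$ surfaces at generic points of $H^\delta_\kappa$, whose Picard group is generated by a single $(-2)$-class --- are typically non-projective K\"ahler $K3$'s, and for those the ample-bundle argument is unavailable. The fix is standard and keeps the rest of your proof untouched: since $f^*$ is trivial on $H^2$, $f$ fixes a K\"ahler class, hence preserves the unique Ricci-flat K\"ahler metric in that class (Calabi--Yau), so $f$ lies in the compact isometry group of that metric; this group is finite because $H^0(X,\Theta_X)=0$ and, $b_1(X)=0$ with Killing fields on a Ricci-flat compact manifold being parallel, there are no Killing fields either. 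With finiteness secured in this way (or by citing the reference, as the paper does), your Lefschetz bookkeeping completes the proof in the generality the paper needs.
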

As a result, we can see that the automorphism group of $X$ or $X\# n\overline{\mathbb{CP}^2}(n\geq 1)$ must be discrete.\\
Again, without loss of generality, we'll show transversality of $\FSW^\delta_\kappa(1,...,n)$. When $n=0$, we can define $\mathcal{F}^\delta_\kappa$ to be a disk intersecting $H^\delta_\kappa$ transversely. By \cite{Smirnov2022}, we get a unique and transversal solution of the family Seiberg Witten equation. By an induction process similar to Theorem$\autoref{thm:regularity of FSW solutions on blow up family}$ and our construction, we can get the transversality of the family Seiberg Witten equations 
$$\FSW_{\mathcal{F}^\delta_\kappa(1,...,n)}(\mathfrak{s}_{\delta-e_1-...-e_n})$$
As a result, $q_{\delta-e_1-...-e_n}([\mathcal{S}^\delta_\kappa(1,...,n)])=1$. Because the intersection point of $\mathcal{F}^\delta_\kappa(1,...,n)$ and $\mathcal{H}^\delta_\kappa(1,...,n)$ is generic, we know for any $\delta^\prime\neq \delta$, $\delta^\prime-e_1-...-e_n$ is never in $(1,1)-$class of any fiber, so $q_{\delta^\prime-e_1-...-e_n}([\mathcal{S}^\delta_\kappa(1,...,n)])=0$.\\
In conclusion, $q_{n,n}$ is a surjective homomorphism from $\pi_{2n+1}(S_{[\tilde{\omega}]})$ to $\bigoplus\limits_{\delta\in\Delta_k}\mathbb{Z}_2$.\\
And in the case $k<n$, we can apply similar method and get $q_{\delta-e_{i_1}-...-e_{i_k}}([\mathcal{S}^\delta_\kappa(i_1,...,i_k)])=1$ and $q_{\delta^\prime-e_{i^\prime_1}-...-e_{i^\prime_k}}([\mathcal{S}^\delta_\kappa(i_1,...,i_k)])=0$. So $q_{k,n}$ is also a surjective homomorphism from $\pi_{2k+1}(S_{\tilde{\omega}},\omega)$ to $\bigoplus\limits_{(\delta,i_1,...,i_k)\in\Delta_k\oplus I_{k,n}}\mathbb{Z}_2$. That completes the proof.
\end{proof}
\begin{rmk}\label{rmk: clarification of the proof}
When $\kappa$ is non-resonant, we don't have the boundary condition for Kronheimer's invariant, so we can't apply this method to detect symplectic mapping class group of $K3$ surface.
\end{rmk}
\subsection{Symplectomorphism group of multiple-point blow-up of Enriques surface}
\label{subsec: 4.4}
Recall the definition of an Enriques surface, it is an complex  surface
$$Y = X/\{id, \rho\}$$
defined as the quotient of an involution $\rho$ on a $K3$ surface $X$. By the result of Bogomolov\cite{Bogomolov1974}, we know for a Kahler class $\kappa\in H^2(X;\mathbb{R})$ s.t. $\rho^*\kappa=\kappa$, then there will be a $\rho-$invariant Kahler form $\omega$ representing $\kappa$. This defines the Kahler cone of a complex Enriques surface. Similar to $\eqref{subsec: 4.3}$, we also construct homological homomorphisms parametrized by classes of rational curves on (blowup of)Enriques surface, so we need Kahler class $\kappa$ with similar property as the non-resonant class in $\eqref{subsec: 4.3}$.\\  
We'll take $\rho^*-$invariant $\kappa$ s.t. $\{ \langle \kappa, \delta \rangle |\delta\in \Delta\}$ defines a dense subset of some neighborhood of $0$ in $\mathbb{R}$. Then for some positive number $\lambda$ with $n\lambda$ small enough, we define the index sets $\Delta_{k,\rho}$:
\begin{equation}\label{equa: Delta_k,rho}
\Delta_{k,\rho}:=\{\delta\in\Delta|(k-1)\lambda<\langle \delta ,\kappa \rangle <k\lambda, \rho^*\delta\neq -\delta\} 
\end{equation}
Since $b^+$ of an Enriques surface is equal to $1$, it's not easy for us to calculate family Seiberg Witten equation on family of Enriques surface. So we will correspond Kronheimer's fibration of Enriques surface to an equivariant version of Kronheimer's fibration on $K3$ surface.\\
Now we introduce the equivariant version of Kronheimer's fibration:
\begin{equation}\label{equa:equivariant Kronheimer's fibration}
\Symp^\pi_s(M,\omega) \rightarrow \Diff^\pi_0(M)\rightarrow S^\pi_{[\omega]},
\end{equation}
Here, $\pi$ is the a covering map $(M,\omega)\rightarrow (\bar{M},\bar{\omega})$ between two smooth oriented closed symplectic $4-$manifolds. 
\begin{multline}\label{equa: definition of equivariant symp}
\Symp^\pi_s(M,\omega):=\{f\in \Symp(M,\omega)|f \text{ induces an element } \bar{f}\in
\Symp(\bar{M},\bar{\omega})\cap \Diff_0(\bar{M}),\\
\text{ in diagram }\eqref{diagram: commutative diagram, equivariant diff and symp}, \text{ and } f^*=id \text{ on homology}\}
\end{multline}  
\begin{multline} \label{equa: definition of equivariant diffeomorphism group}
\Diff^\pi_0(M):= \{ f\in \Diff(M)|f \text{ induces an element }\bar{f}\in \Diff_0(\bar{M})\\
\text{ in diagram }\eqref{diagram: commutative diagram, equivariant diff and symp}, \text{ and } f^* =id \text{ on homology}\}
\end{multline}
\begin{figure}[h]
    \centering
    \[ \xymatrix@C=3cm@R=1.5cm{
    M \ar[r]^{f} \ar[d]_{\pi} & M \ar[d]^{\pi} \\
    \bar{M} \ar[r]_{\bar{f}} & \bar{M}
    } \]
    \caption{Commutative diagram: $\Symp^\pi_s$ and $\Diff^\pi_0$}
    \label{diagram: commutative diagram, equivariant diff and symp}
\end{figure}
\begin{equation}\label{definition of equivariant symplectic forms}
S^\pi_{[\omega]}:=\{\pi^*\omega'|\omega'\in S_{[\omega]}\}
\end{equation}
It is not hard to see that both $\Symp^\pi_s$ and $\Diff^\pi_0$ are groups, and $S^\pi_{[\omega]}$ can be identified with $S_{[\omega]}$, the chosen component of the space of symplectic forms on $\bar{M}$.
Consider the natural projections $$p_{symp}:\Symp^\pi(M,\omega)\rightarrow \Symp(\bar{M},\bar{\omega})\cap\Diff_0(\bar{M}),\quad p_{diff}:\Diff^\pi_0(M)\rightarrow \Diff_0(\bar{M})$$
\begin{lem}\label{lem:identification of symp and diff groups for covering map}
$\Symp^\pi_s(M,\omega)$ and $\Diff^\pi_0(M)$ are both subgroups of $\Diff_0(M)$. Both $p_{symp}$ and $p_{diff}$ are isomorphisms between Lie groups.
\end{lem}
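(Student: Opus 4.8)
The proof rests on two elementary features of the covering map $\pi$: it is a local diffeomorphism, so vector fields and isotopies lift uniquely from $\bar M$ to $M$; and the pullback $\pi^*$ on differential forms is injective. Write $G$ for the group of deck transformations of $\pi$ --- in the Enriques application $G=\{\mathrm{id},\rho\}$ --- and note that throughout we use that a nontrivial element of $G$ cannot act trivially on $H^*(M;\mathbb Z)$ (for the Enriques involution this is the statement $\rho^*\neq\mathrm{id}$ on $H^2$ of the $K3$).

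The first step is the subgroup claim. Closure of $\Diff^\pi_0(M)$ and $\Symp^\pi(M,\pi^*\omega)$ under composition and inversion is immediate, since $f\mapsto\bar f$ is multiplicative and the requirements that $f$ descend to $\Diff_0(\bar M)$ and act trivially on homology are preserved by products and inverses; here one uses that a diffeomorphism $f$ of $M$ commuting with $G$ descends to a diffeomorphism $\bar f$ of $\bar M$, and that if such an $f$ is moreover a symplectomorphism of $(M,\pi^*\omega)$ then $\bar f$ is automatically a symplectomorphism of $(\bar M,\omega)$, because $\pi^*(\bar f^*\omega)=f^*(\pi^*\omega)=\pi^*\omega$ forces $\bar f^*\omega=\omega$ by injectivity of $\pi^*$. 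To see that these groups lie in $\Diff_0(M)$, take $f\in\Diff^\pi_0(M)$, choose an isotopy $\{\bar f_t\}$ in $\Diff(\bar M)$ from $\mathrm{id}$ to $\bar f$, lift its time-dependent generating vector field to the unique $\pi$-related field on $M$, and integrate it to an isotopy $\{g_t\}$ with $g_0=\mathrm{id}$ and $\pi\circ g_t=\bar f_t\circ\pi$. Then $\pi\circ g_1=\bar f\circ\pi=\pi\circ f$, so $f^{-1}g_1\in G$; since $f$ and $g_1\in\Diff_0(M)$ both act trivially on homology, so does $f^{-1}g_1$, forcing $f^{-1}g_1=\mathrm{id}$ and hence $f=g_1\in\Diff_0(M)$. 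The identical argument places $\Symp^\pi(M,\pi^*\omega)$ inside $\Diff_0(M)$.

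The second step is bijectivity of $p_{symp}$ and $p_{diff}$. Injectivity: if $p_{diff}(f)=\mathrm{id}$ then $\pi\circ f=\pi$, so $f\in G$, and triviality of $f^*$ on homology forces $f=\mathrm{id}$; the same reasoning gives injectivity of $p_{symp}$. Surjectivity: given $\bar f\in\Diff_0(\bar M)$, the lift $g_1$ produced by the isotopy construction above satisfies $\pi\circ g_1=\bar f\circ\pi$ and lies in $\Diff_0(M)$, hence in $\Diff^\pi_0(M)$, with $p_{diff}(g_1)=\bar f$; for $p_{symp}$ one starts from $\bar f\in\Symp(\bar M,\omega)\cap\Diff_0(\bar M)$ and uses that its lift $g_1$ automatically satisfies $g_1^*(\pi^*\omega)=\pi^*(\bar f^*\omega)=\pi^*\omega$, so that $g_1\in\Symp^\pi(M,\pi^*\omega)$ maps to $\bar f$.

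Finally, the Lie-group statement. Both $\Diff^\pi_0(M)$ and $\Symp^\pi(M,\pi^*\omega)$ are Lie subgroups of $\Diff(M)$: the former is an open subgroup of the centralizer of $G$ in $\Diff(M)$ (the requirements that $f$ descend to $\Diff_0(\bar M)$ and act trivially on $H^*(M)$ cut out an open and closed subset there), and $\Symp^\pi(M,\pi^*\omega)=\Symp(M,\pi^*\omega)\cap\Diff^\pi_0(M)$ by the automatic-descent observation above. Differentiating $p_{diff}$ at the identity gives the map sending a $G$-invariant vector field on $M$ to its push-forward on $\bar M$, which --- because $\pi$ is a covering --- is a linear isomorphism onto the space of vector fields on $\bar M$, with inverse the $\pi$-related lift; the analogous statement holds for $p_{symp}$ with $G$-invariant symplectic vector fields. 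Hence each of $p_{symp},p_{diff}$ is a smooth bijective homomorphism that is a local diffeomorphism, and a bijective local diffeomorphism is a diffeomorphism, so both are isomorphisms of Lie groups. I expect the only point requiring care --- bookkeeping rather than a genuine obstacle --- to be the infinite-dimensional formalism: verifying that the relevant subgroups are honest (tame Fréchet) Lie subgroups and that the lift map is smooth; the geometric content is entirely covering-space theory together with the homological rigidity of $G$.
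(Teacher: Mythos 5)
Your proof is correct and follows essentially the same route as the paper: lift the isotopy of $\bar f$ from the identity through the covering, use that a nontrivial deck transformation acts nontrivially on homology to conclude the lift equals $f$ (giving the subgroup claim), and get injectivity/surjectivity of $p_{\textnormal{diff}}$, $p_{\textnormal{symp}}$ from the same deck-transformation and lifting arguments. Your added discussion of the (Fréchet) Lie group structure and the derivative of the projections is extra bookkeeping the paper does not spell out, but it does not change the substance of the argument.
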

\begin{proof}
For any diffeomorphism $f\in \Diff^\pi_0$, since $\bar{f}$ is isotopic to identity, we can take an isotopy $H:I\times \bar{M} \rightarrow \bar{M}$ where $H_0=id$ and $H_1=\bar{f}$. Since $\pi: M\rightarrow \bar{M}$ is  a covering map, the isotopy $H$ can be lifted to an isotopy $\tilde{H}$ on $M$ where $\tilde{H}_0=id$. We claim that $\tilde{H}_1=f$.\\ 
For any $f,g\in \Diff(M)$, and $x\in M$, $\bar{f}(\bar{x})=\bar{g}(\bar{x})$ on $\bar{M}$ if and only if $f(x)=\alpha\circ g(x)$ for some $\alpha\in\Gamma$, where $\Gamma$ is the group of deck transformations on $M$. So $\tilde{H}_1=f$ or $\alpha\circ f$. And since the action of $\alpha$ on homology is trivial if and only if $\alpha=id$, we can see that $\tilde{H}_1=f$. In conclusion, $\Diff^\pi_0$ is a subgroup of $\Diff_0(M)$. And similarly, $\Symp^\pi_s(M,\omega)$ is a subgroup of $\Symp(M,\omega)\cap \Diff_0(M)$.\\
Now we begin to show that $p_{symp}$ and $p_{diff}$ are injective. Suppose that $f\in\Diff^\pi_0(M)$ and $\bar{f}=id$. A diffeomorphism covering $id_{\bar{M}}$ must be a deck transformation on $M$. With the homological assumption, we get $f=id_M$. Similar conclusion holds for $p_{symp}$.\\
Also, by construction of the isotopy $H$, we know that both $p_{symp}$ and $p_{diff}$ are surjective.\\
\end{proof}
As a result, for a $K3$ surface $X$ which admits holomorphic projection to an Enriques surface $Y$, 
$\Symp(Y,\omega)\cap\Diff_0(Y)$ and $\Diff_0(Y)$ can be naturally identified with subgroups of $\Symp_s(X,\omega)$ and $\Diff_0(X)$, respectively. As a result, $\Coker(\pi_k(\Diff_0(Y)\rightarrow \pi_k(S_{[\bar{\omega}]}))\cong \Coker(\pi_k(\Diff^\pi_0(X)\rightarrow \pi_k(S^\pi_{[\omega]}))$, and any family of $\rho-$invariant sympelctic forms 
\begin{equation}
\mathcal{S}:=\{(\omega_t|\omega_{(1,0,...,0)}=\omega, t\in S^k\}
\end{equation} 
over $X$ in the class $[\omega]$ s.t. $[\mathcal{S}]\in\pi_k(S_{[\omega_0]})$ is not in the image of $p_*:\pi_k(\Diff_0(X))\rightarrow \pi_k(S_{[\omega_0]})$ defines an non-trivial element in $\Coker(\pi_k(\Diff^\pi_0(X)\rightarrow \pi_k(S^\pi_{[\omega]}))$. So we get nontrivial element in $\pi_{k-1}(\Symp_s(Y,\bar{\omega}))$.\\
Let's recall the definition of $\mathcal{M}_{K3}^-$ in $\autoref{subsec:2.2}$ and write it as $\mathcal{M}^-$ for simplicity. Since any $u\in\mathcal{M}^-$ is represented by a holomorphic $2-$form $\alpha$ s.t. $\rho^*\alpha=-\alpha$, when $\kappa$ is $\rho^*-$invariant, we can see that $\mathcal{M}^-$ must be contained in $\mathcal{M}_\kappa$. As a result, for any Kahler class $\bar{\kappa}$ on Enriques surface, the $\bar{\kappa}-$polarized period domain is exactly the same as the period domain $\mathcal{M}_{En}$. As in the previous sections, the transversal intersection $H^\delta_\kappa \cap \mathcal{M}^-$ defines a hyperplane in $\mathcal{M}^-$, its image in $\mathcal{M}_{En}$ is a hyperplane of $\mathcal{M}_{En}$.\\
For $\delta\in\Delta_{k,\rho}$, we can see that the intersection between $H_\delta$ and $\mathcal{M}^-$ is transverse. As in $\autoref{subsec: 4.3}$, we define
\begin{equation}\label{equa: hyperplane H_delta on Enriques surface}
H_{\bar{\delta}}:=\{\bar{u}\in\mathcal{M}_{En}|u\in\mathcal{M}^-\backslash\cup_{\delta^\prime\in V[-1])}H_{\delta^\prime} \text{ and } u\in H_\delta\}
\end{equation}
Notice that $\bar{\delta}=\overline{\rho^*\delta}$. Similarly as in the previous sections, we define ``generic point'' $\bar{u}$ in the hyperplane $H_\delta$ to be the points $\bar{u}\notin H_{\overline{\delta^\prime}}$ for any $\delta^\prime \neq \delta$ or $\rho^*\delta$.\\
And we can define the parameter spaces 
\begin{align}\label{diagram:P bar for Enriques surface}
\mathcal{P}:= &\mathcal{M}_{En}\times \Conf_n(Y)\\
\label{diagram:H for Enriques surface}
\mathcal{H}^\delta(i_1,i_2,...,i_k):= & \{(\bar{u},y_1,...,y_n)\in\mathcal{P}|\bar{u}\in H_{\bar{\delta}},y_{i_1},...,y_{i_k}\in C_{\bar{u}}\}
\end{align}
Here, $C_{\bar{u}}$ is the unique rational $(-2)$ curve in the Enriques surface in class $\bar{\delta}$.\\
Again, we define $\mathcal{F}^{\bar{\delta}}(i_1,...,i_k)$ as a fiber of normal bundle of $\mathcal{H}^\delta(i_1,...,i_k)$. And the intersection point is $(\bar{u},y_1,...,y_n)$ s.t. $u$ is a generic point. Following the construction in $\autoref{subsec: 4.3}$, we get a $S^{2k+1}$ Kahler family $\mathcal{S}^\delta_\kappa(i_1,...,i_k)$ and $[\mathcal{S}^\delta_\kappa(i_1,...,i_k)]\in\pi_{2k+1}(S^\pi_{[\omega]})\cong\pi_{2k+1}(S_{[\bar{\omega}]})$. Now, we get a $S^{2k+1}$ Kahler family of $2n-$point blowup of $K3$ surfaces. We define 
\begin{equation}\label{equa: definition of exceptional curve set}
\mathcal{E}:=\{e_1,...,e_n,e_{n+1},...,e_{2n}\}
\end{equation}
And $\rho$ induces an involution on $\mathcal{E}:e_{i}\rightarrow e_{n+i}\quad (mod 2n)$.
Applying the guage theoretic invariant $q$ on $K3$ surface to $p_{symp}^*[\mathcal{S}^{\bar{\delta}}(i_1,...,i_k)]$, we get 
$$q_{\rho^*(\delta),i_1^\prime,...,i_k^\prime}(p_{symp}^*[\mathcal{S}^{\bar{\delta}}(i_1,...,i_k)])=1$$
whenever $i_t^\prime=i_t$ or $\rho^*i_t$,
and $q_{\delta^\prime,i_1^\prime,...,i_k^\prime}([\mathcal{S}^{\bar{\delta}}(i_1,...,i_k)])=0$ for any other $(\delta^\prime,i_1^\prime,...,i_k^\prime)$.\\
In conclusion, the image of $q_{k,2n}$ is always an infinitely generated group. So $\pi_{2k}(Y\#n\overline{\mathbb{CP}^2},\omega)$ is infinitely generated for any Kahler form $\omega$ in any Kahler class $[\omega]$ with the prescribed property, $k=1,...,n$.
\begin{pro}\label{pro: infinitely generated symp family on blow up Enriques surface}
Let $(Y\#n\overline{\mathbb{CP}^2},\omega)$ be $n-$point blow up of a Kahler Enriques surface $(Y,\omega)$ s.t. $\{\langle \omega , \delta \rangle |\delta\in H^2(X;\mathbb{Z}), \delta^2 = -2 \}$ is a dense subset in a neighborhood of $0$ in $\mathbb{R}$. We assume the size of exceptional curves are equal and the of them is small enough. Then
$$\Ker(\iota_*:\pi_{2k}(\Symp(Y\#n\overline{\mathbb{CP}^2},\omega)) \rightarrow \pi_{2k}(\Diff(Y\#n\overline{\mathbb{CP}^2},\omega))$$
is infinitely generated for $k=1,...,n$.
\end{pro}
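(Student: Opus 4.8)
The plan is to transport the question to the equivariant family Seiberg--Witten setting on $2n$-point blow-ups of $K3$ surfaces and then reuse the computation of part $2$ of \autoref{thm: infinitely generated symp family}. Let $\pi\colon X\to Y$ be the $K3$ double cover of the Enriques surface; blowing up $n$ points of $Y$ together with their $\pi$-preimages produces a double cover $\pi\colon X\#2n\overline{\mathbb{CP}^2}\to Y\#n\overline{\mathbb{CP}^2}$. By \autoref{lem:identification of symp and diff groups for covering map} the equivariant Kronheimer fibration $\eqref{equa:equivariant Kronheimer's fibration}$ for this cover is homotopy equivalent to Kronheimer's fibration of $(Y\#n\overline{\mathbb{CP}^2},\omega)$, with $S^\pi_{[\pi^*\omega]}\cong S_{[\omega]}$. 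Running the homotopy long exact sequence of the latter gives, for each $k\ge 1$,
$$\Ker\bigl(\iota_*\colon\pi_{2k}(\Symp(Y\#n\overline{\mathbb{CP}^2},\omega))\to\pi_{2k}(\Diff(Y\#n\overline{\mathbb{CP}^2},\omega))\bigr)\;\cong\;\Coker\bigl(p_*\colon\pi_{2k+1}(\Diff_0)\to\pi_{2k+1}(S_{[\omega]})\bigr),$$
so it suffices to exhibit an infinitely generated quotient of this cokernel, and under $p_{symp}^*$ every class in $\pi_{2k+1}(S_{[\omega]})$ lifts to a $\rho$-invariant family of symplectic forms on $X\#2n\overline{\mathbb{CP}^2}$.

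The second step is to build the test families inside the $(-1)$-eigenspace period domain $\mathcal{M}^-$ exactly as in $\autoref{subsec: 4.3}$: fix $\lambda$ with $n\lambda$ small, and for $\delta$ with $\delta^2=-2$ in the appropriate range and $I=\{i_1<\cdots<i_k\}\subseteq\{1,\dots,n\}$ form the subvariety $\mathcal{H}^{\delta}(i_1,\dots,i_k)\subset\mathcal{M}_{En}\times\Conf_n(Y)$, take a normal $S^{2k+1}$ sphere $\mathcal{S}^{\bar\delta}(i_1,\dots,i_k)$ through a generic period point, and equip it with a Kähler family in class $\pi^*\kappa-\lambda(e_1+\cdots+e_n)$, yielding $[\mathcal{S}^{\bar\delta}(i_1,\dots,i_k)]\in\pi_{2k+1}(S^\pi_{[\pi^*\omega]})\cong\pi_{2k+1}(S_{[\omega]})$. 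Viewed upstairs this is a $\rho$-equivariant $S^{2k+1}$ Kähler family of $2n$-point blow-ups of $K3$ in which the $(-2)$-curve $\delta-\sum_{t\in I}e_{i_t}$ and its $\rho$-conjugate appear only over the central fiber; I would then apply the higher $q$-invariant $q_{k,2n}$ of $K3$ to $p_{symp}^*[\mathcal{S}^{\bar\delta}(i_1,\dots,i_k)]$.

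The evaluation is the one carried out in part $2$: a negative inflation removes, for area reasons, every curve in the conjugate $\spinc$ class, so the $Q^-$ contribution vanishes; Taubes' correspondence then leaves one Seiberg--Witten solution, over the central fiber, represented by the proper transform of the $(-2)$-curve; and regularity of that solution follows by induction on the number of blown-up points from \autoref{thm:regularity of FSW solutions on blow up family}, the base case being the equivariant $K3$ count of $\autoref{subsec: 4.3}$ and the inductive hypotheses holding because every $2n$-point blow-up of $K3$ has discrete automorphism group by \autoref{pro: automorphism group of K3 surface} and the blow-up disks are transverse to the relevant curve by genericity of the period point. Hence $q_{k,2n}$ takes the value $1$ precisely on the pair of summands indexed by $(\delta,I)$ and $(\rho^*\delta,\rho^*I)$, and all other summands vanish by genericity. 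Since $q_{k,2n}$ annihilates $\Image(p_*)$ — reduce $q$ to a family Seiberg--Witten invariant over a closed base and use conjugation symmetry, as in \autoref{lem: reduction of q to monodromies} and \autoref{lem: q is a holological invariant} — it descends to $\Coker(p_*)$, and the density hypothesis on $\{\langle\pi^*\kappa,\delta\rangle:\delta^2=-2\}$ keeps $\Delta_k$ infinite, so the classes $[\mathcal{S}^{\bar\delta}(i_1,\dots,i_k)]$ span an infinitely generated subgroup of the target $\bigoplus_{(\delta,I)\in\Delta_k\times I_{k,2n}}\mathbb{Z}_2$ of $q_{k,2n}$. This forces $\Coker(p_*)\cong\Ker(\iota_*)$ to be infinitely generated for $k=1,\dots,n$.

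The main obstacle I anticipate is twofold. First, $\pi^*\kappa$ is necessarily \emph{resonant} on $K3$ — it pairs to zero with every class in $V[-1]$ — so the openness and density inputs that came from non-resonance in $\autoref{subsec: 4.3}$ must be re-established inside $\mathcal{M}^-$ from the density hypothesis alone: one needs the $\rho$-invariant analogue of $\mathcal{M}_{\kappa,\lambda,\dots,\lambda}$ to be open and dense in $\mathcal{M}^-$ (a $\rho$-equivariant version of \autoref{lem:parameter space for M_kappa,lambda}) and $\Delta_k$ to remain infinite after intersecting the period hyperplanes with $\mathcal{M}^-$. Second, one must verify at every stage of the induction that the $\rho$-equivariant iterated blow-up families still meet the discreteness-of-automorphisms and transversality-of-disk hypotheses of \autoref{thm:regularity of FSW solutions on blow up family}, so that $q_{k,2n}$ can be read off from a single regular solution just as in the non-equivariant $K3$ case; the equivariance itself causes no trouble because $q$ is computed downstairs in the ordinary $K3$ Seiberg--Witten theory.
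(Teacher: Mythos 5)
Your proposal is correct and follows essentially the same route as the paper: lifting through the $K3$ double cover via the equivariant Kronheimer fibration and \autoref{lem:identification of symp and diff groups for covering map}, constructing the $\rho$-invariant $S^{2k+1}$ K\"ahler families from the Enriques period domain, and evaluating the $K3$ invariant $q_{k,2n}$ on $p_{symp}^*[\mathcal{S}^{\bar{\delta}}(i_1,\dots,i_k)]$ exactly as in part $2$, with the two conjugate summands $(\delta,I)$ and $(\rho^*\delta,\rho^*I)$ carrying the value $1$. The obstacles you flag (resonance of $\pi^*\kappa$ upstairs, density of the $\rho$-invariant polarized locus) are treated in the paper only via the density hypothesis and the removal of the $V[-1]$ hyperplanes in the definition of $\mathcal{M}_{En}$, so your account matches the paper's level of detail.
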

\begin{rmk}\label{rmk: Kedra's work}
For a small blowup of a symplectic manifold $(M,\omega)$ with $b^2(M)> 2$, Kedra\cite{kedra2005} shows that $\Symp(\tilde{M})$ can't be homotopy equivalent to any finite CW complex. He prove the result by showing $\pi_*(\Symp(\tilde{M}))\otimes \mathbb{Q}$ is of infinite dimensional. 
\end{rmk}
\begin{rmk}\label{rmk: first example of b^+=1}
To the author's knowledge, $\Symp(Y\#n\overline{\mathbb{CP}^2},\omega)$ is the first example with infinitely generated $\pi_k$ for symplectic manifold with $b^+=1$.  
\end{rmk}
\bibliographystyle{plain} 
\bibliography{reference}

\end{document}